\numberwithin{figure}{section}
\numberwithin{equation}{section}
\newtheorem{Theorem}{Theorem}[section]
\newtheorem{Lemma}[Theorem]{Lemma}
\newtheorem{Proposition}[Theorem]{Proposition}
\newtheorem{Corollary}[Theorem]{Corollary}
\newtheorem{Conjecture}[Theorem]{Conjecture}
\newtheorem{Example}[Theorem]{Example}
\theoremstyle{definition}
\theoremstyle{remark}
\newtheorem{Remark}[Theorem]{Remark}
\newcommand{\Mod}{\mathrm{Mod}}
\newcommand{\eps}{\epsilon}
\newcommand{\calH}{\mathcal{H}}
\newcommand{\G}{\Gamma}
\newcommand{\D}{\Delta}
\newcommand{\dist}{\mathrm{dist}}
\newcommand{\diam}{\mathrm{diam}}
\renewcommand{\mod}{\mathrm{mod}}
\newcommand{\Card}{\mathrm{Card}}
\newcommand{\SLE}{\mathrm{SLE}}
\DeclareMathAlphabet{\mathpzc}{OT1}{pzc}{m}{it}
\begin{document}
	\title{Conformal Dimension of the Brownian Graph}
	
	\author{Ilia Binder}
	\address{Department of Mathematics, University of Toronto, 40 St. George Street, Toronto, Ontario, Canada M5S2E4}
	\email{ilia@math.toronto.edu}
	\thanks{I.B. was supported by an NSERC Discovery grant.}
	
	\author{Hrant Hakobyan}
	\address{Department of Mathematics, Kansas State University, Manhattan, KS, 66506}
	\email{hakobyan@math.ksu.edu}
	\thanks{H.~H. was partially supported by Simons Foundation Collaboration Grant, award ID: 638572.}
	
	\author{Wen-Bo Li}
	\address{Beijing International Center for Mathematical Research, Peking University, No. 5 Yiheyuan Road, Haidian District, Beijing, China, 100871}
	\email{liwenbo@bicmr.pku.edu.cn}
	
	\subjclass[2020]{Primary 30L10, 60D05}
	\keywords{Brownian motion, quasisymmetry, conformal dimension}
	
	\begin{abstract}
		Conformal dimension of a metric space $X$, denoted by $\dim_C X$, is the infimum of the Hausdorff dimension among all its quasisymmetric images.  If conformal dimension of $X$ is equal to its Hausdorff dimension,   $X$  is said to be \emph{minimal for conformal dimension}.  In this paper we show that the graph of one dimensional Brownian motion is almost surely minimal for conformal dimension.  We also give other examples of sets that are minimal for conformal dimension. These include  Bedford-McMullen self-affine carpets with uniform fibers as well as graphs of continuous functions of Hausdorff dimension $d$, for every $d\in[1,2]$.  The main technique in the proofs is the construction of ``rich families of minimal sets of conformal dimension one''. The latter concept is quantified using Fuglede's modulus of measures.
	\end{abstract}
	
	\maketitle
	
	\tableofcontents
	
	\section{Introduction}\label{Introduction}
	Let $\eta:[0, \infty) \longrightarrow [0, \infty)$ be an increasing continuous function such that  $\eta(0)=0$ and $\eta(t)\underset{t\to\infty}{\longrightarrow}\infty$.  A homeomorphism  $f: X \to Y$ between metric spaces is called \emph{$\eta$-quasisymmetric},  if for all $x, y, z \in X$ with $x \neq z$ and $t>0$ we have
	\begin{align}\label{def:quasisymmetry}
		\frac{d_Y(f(x),f(y))}{d_Y(f(x),f(z))} \leq \eta \left(t\right),
	\end{align}
	whenever ${d_X(x,y)} \leq t {d_X(x,z)}$.
	A mapping $f$ is called \emph{quasisymmetric} if it is $\eta$-quasisymmetric for some \emph{distortion function} $\eta$.  Informally, quasisymmetries can be thought of as deformations which distort approximate shapes of objects by a bounded amount, cf.  \cite{Hei01}.
	
	Quasisymmetries were introduced by Tukia and V\"ais\"al\"a as a generalization of conformal and quasiconformal mappings to the setting of general metric spaces \cite{Tukia-Vaisala}.  In fact, for $n\geq 2$ a self map of  $\mathbb{R}^n$ is quasiconformal if and only if it is quasisymmetric, see e.g.  \cite[Section 34]{Vaisala}.  This equivalence holds true in a much greater generality,  that is for so-called Loewner spaces introduced by Heinonen and Koskela in \cite{HK98},
	where also the foundations of the field of analysis on metric spaces were laid.
	
	\subsection{Conformal dimension} A central problem in geometric mapping theory and analysis on metric spaces is the classification of metric spaces up to quasisymmetries. The study of quasisymmetric invariants thus plays an important role in this context.  One such invariant is obtained by studying how Hausdorff dimension of $X$, denoted by $\dim_H X$,  varies under quasisymmetries.
	
	It is well known that Hausdorff dimension of any metric space $(X,d_X)$ can be arbitrarily increased by quasisymmetries. Indeed, if $p<1$ then the identity  mapping of $(X,d_X)$ to $\left(X,d_X^p\right)$,  also known as ``snowflaking'',  is a quasisymmetry which increases the Hausdorff dimension of $X$ by a factor of $1/p>1$.  On the other hand, for some spaces the Hausdorff dimension cannot be made smaller (e.g. $\mathbb{R}^n$).
	
	\emph{Conformal dimension} of a metric space $X$ is the infimum of the Hausdorff dimension among all quasisymmetric images of $X$:
	\begin{align}
		\dim_C(X) = \inf \left\{\dim_H f(X) : \mbox{$f$ is a quasisymmetry} \right\}.
	\end{align}
	
	Clearly, $\dim_C X\leq \dim_H X$.  A metric space $X$ is said to be \emph{minimal for conformal dimension} or \emph{minimal} for short if
	\begin{align}
		\dim_C X = \dim_H X.
	\end{align}
	
	Since being  introduced in \cite{Pan89}, conformal dimension has had a profound influence on analysis on metric spaces and also found important applications in geometric group theory and dynamics, see e.g. \cite{Bonk-Kleiner:confdim,Bonk-Meyer,Kleiner:icm,MT10}.
	
	The problem of finding or estimating conformal dimension has been studied for many general metric spaces and concrete deterministic fractal sets, see \cite{BT01, DS97,KOR18, Kov06,Kwa20, Mac11, MT10,TW06}.  Nevertheless,  many interesting questions remain wide open.  For instance, the precise value of conformal dimension of the standard Sierpi\'nski carpet $S_3$ is not known, and also it is not known if $\dim_C S_3$  is achieved for any specific quasisymmetric mapping, cf. \cite{MT10}.  Perhaps more strikingly it is not even known if there is a quasisymmetric mapping $f$ of the complex plane s.t. $\dim_H f(S_3)< \dim_H S_3$. The fact that $\dim_C S_3$ is strictly smaller than $\dim_H S_3$ follows from the work of Keith and Laakso \cite{Keith-Laakso}. For the best currently known estimates of $\dim_C S_3$ the reader may consult \cite{Kwa20}.
	
	\subsection{Main result} Given the progress in understanding the quasisymmetric geometry of many deterministic fractals,  a very natural and interesting further step is to study quasiconformal geometry of stochastic objects and, in particular,  their conformal dimension.  Namely, given a random space $X$ with almost sure Hausdorff dimension $d$, it is natural to ask if conformal dimension of $X$ is almost surely a constant, and if so,  is $X$ minimal almost surely.  In \cite{RS21}, it was shown that random sets known as fractal percolation were not minimal almost surely, though no explicit values for the conformal dimension were obtained. See also \cite{Mackay12} for results on the conformal dimension of boundaries of random groups. 
	
	In this paper, we denote by $W\!(t)$ the $1$-dimensional Brownian motion, and by $\Gamma(W)$ its graph. It is well known that Hausdorff dimension of the graph $\Gamma(W)$ is ${3}/{2}$ a.s.,  see e.g. \cite[Theorem $4.29$]{MP10}. The following is the main result of this paper.
	
	\begin{Theorem}\label{bg}
		The graph $\Gamma(W)$ of the $1$-dimensional Brownian motion is minimal for conformal dimension almost surely.
	\end{Theorem}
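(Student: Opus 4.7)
The plan is to invoke a general criterion for minimality of conformal dimension, which would be established earlier in the paper, and then verify its hypotheses for $\Gamma(W)$ on a full-probability event. Following the strategy signalled in the abstract, the criterion should take the form of a Fuglede-type transfer principle: if a compact metric space $X$ of Hausdorff dimension $Q$ admits a family $\mathcal{F}$ of subsets, each of which is minimal for conformal dimension and of Hausdorff dimension one, and if $\mathcal{F}$ carries positive Fuglede modulus of measures at an exponent depending on $Q$, then $\dim_C X = Q$. The heuristic is that under any quasisymmetry $f$, each member of $\mathcal{F}$ has image of Hausdorff dimension at least one (by minimality of the member), and the modulus hypothesis prevents these one-dimensional images from collapsing into a subset of $f(X)$ of total Hausdorff dimension below $Q$.

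Granting the general theorem, the work reduces to constructing in $\Gamma(W)$ an explicit rich family. The natural way to parameterize such a family is by the range of $W$: for each level $y$, produce a $1$-dimensional subset $F_y \subset \Gamma(W)$ lying near the level $y$. Because the level set $L_y = W^{-1}(y)$ has Hausdorff dimension $1/2$ almost surely, the naive horizontal lift $L_y \times \{y\}$ has dimension only $1/2$ and is not a valid candidate; the construction therefore has to enlarge $L_y$, for instance by attaching to each of its points a short excursion arc of $\Gamma(W)$, by intersecting with the slab $\Gamma(W)\cap(\mathbb{R}\times[y,y+\eps])$ and extracting a suitable limit through Brownian local time, or by aggregating levels within a small range. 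Whatever the exact recipe, the natural measures on the family $\{F_y\}$ will be built from the joint occupation density of Brownian motion, so that Fuglede modulus computations reduce to classical L\'evy--Kaufman sample-path estimates.

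The main obstacle, and where I expect the bulk of the technical work to lie, is satisfying the two hypotheses of the general criterion simultaneously. Individual minimality of each $F_y$ must be derived from the almost-sure uniform perfectness of Brownian level sets together with a product-like or quasi-arc structure imposed transversally, appealing to standard tools (Ahlfors regularity, uniform perfectness, Tyson-type modulus bounds) that certify conformal-dimension minimality for such fractals. Positivity of the Fuglede $3/2$-modulus of the family, on the other hand, demands quantitative estimates on how $\{F_y\}_y$ fibers $\Gamma(W)$ and relies on occupation-density bounds and Brownian scaling. These requirements pull in opposite directions: each $F_y$ must be geometrically rigid enough to be minimal, yet the family must vary flexibly enough in $y$ to carry positive modulus, and balancing them is the delicate probabilistic--geometric core of the argument. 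The same framework, specialized appropriately, should simultaneously yield the Bedford--McMullen carpet and general continuous-function-graph conclusions announced in the abstract, which provides a useful internal consistency check on the general criterion.
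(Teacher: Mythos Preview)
Your general strategy—invoke a Fuglede-modulus criterion (the paper's Theorem~\ref{fmodest}) and verify its hypotheses for $\Gamma(W)$—is correct, but your proposed family $\{F_y\}$ indexed by horizontal levels $y$ is oriented the wrong way, and this is the central gap. The paper does \emph{not} attempt to thicken horizontal level sets $L_y$ to dimension one. Instead it constructs \emph{vertical} Cantor sets $E$: each $E$ projects under $\pi_y$ onto the \emph{entire} interval $(0,1]$ (a.e.), carries the pull-back measure $\lambda_E = (\pi_y|_E^{-1})_*(\mathcal{L}^1)$, and is built hierarchically by choosing, at each dyadic generation $n$ and in each horizontal strip of height $2^{-n}$, one piece of the Brownian graph where the path grows fast enough that the piece has small $x$-extent. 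The resulting $E$ is the direct analogue of a vertical Cantor set in a Bedford--McMullen carpet with uniform fibers (Section~\ref{MSS}), and its minimality $\dim_C E \geq 1$ is obtained from the paper's general criterion for hierarchically structured ``flat'' sets (Theorems~\ref{cd1} and~\ref{unioncd1}), not from Ahlfors regularity or quasi-arc structure.

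Your horizontal-fiber approach faces two concrete obstacles that the paper's vertical construction sidesteps. First, there is no evident way to make a thickened level set $F_y$ simultaneously one-dimensional and of conformal dimension at least one: attaching short excursion arcs yields a tree-like set whose conformal dimension may well be zero, and a slab intersection $\Gamma(W)\cap(\mathbb{R}\times[y,y+\eps])$ is not obviously minimal either. Second, the relevant modulus exponent is $p=1$, not $3/2$: the measures $\lambda_E$ satisfy a linear lower bound $\lambda_E(B(x,r))\gtrsim r$, and positivity of $\Mod_1(\{\lambda_E\})$ is proved by a minimizing-over-choices argument (Lemma~\ref{lemma:construction} and its Brownian analogue in Section~\ref{BGMAS}) that exploits the disintegration $\mu(A)=\int l^a(A\cap Z_a)\,da$. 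Local time enters not as a measure on each member of the family, but as the device that makes the ambient measure $\mu$ on $\Gamma(W)$ push forward to something absolutely continuous on the $y$-axis, which is exactly what the modulus argument needs.
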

	
	As far as we know, Theorem \ref{bg}  is the first result giving minimality and an explicit value for conformal dimension of continuous random spaces and ``natural'' stochastic processes.
	
	In what follows we briefly describe the usual tools needed to obtain lower bounds for conformal dimension and how we needed to generalize them to apply to the case of the graph of  Brownian motion.
	
	\subsection{Lower bounds and modulus}
	Lower bounds for the conformal dimension, and therefore also proofs of minimality of a metric  space $X$,  are usually obtained by utilizing various concepts of ``sufficiently rich families of rectifiable curves'' in $X$.  The latter can be quantified through the concepts of modulus of path families, the validity of Poincar\'e inequalities,  or the existence of certain diffuse enough measures on families of curves, see \cite[Chapter 4]{MT10} and references therein. The most relevant to the present work is the classical concept of the \emph{modulus of a curve family} $\G$ in $X$, a fundamental tool in geometric function theory \cite{HK98, Hei01}. Recall that if $p\geq 1$,  $(X,d,\mu)$ is a metric measure space,  and $\G$ is a family of locally rectifiabe curves in $X$, then $p$-modulus of $\G$ is defined by
	\begin{align}
		\mathrm{mod}_p(\Gamma) = \inf_{\rho} \int_X \rho^p d \mu,
	\end{align}
	where the infimum is over all Borel functions $\rho$ such that $\int_{\gamma} \rho ds\geq 1$ for all $\gamma\in\Gamma$. Sometimes we will denote the modulus by $\mathrm{mod}_p(\Gamma,\mu)$ to emphasize the dependence on the background measure $\mu$.
	
	An important result of Tyson \cite{Tyson}  states that if $(X,d,\mu)$ is an Ahlfors $p$-regular metric measure space for some $p>1$, i.e., for every ball $B(x,r)\subset X$ one has $\mu(B(x,r))\asymp r^p$, then $\dim_C X=p$,  provided $X$ contains a curve family $\Gamma$ of positive $p$-modulus.  See Theorem \ref{modest} for a little more general formulation.  The prototypical example of a metric space minimal for conformal dimension is the product space $X=E\times [0,1]$, where $E$ is a compact set in $\mathbb{R}^n$.  The curve family $\{ \{x\} \times [0,1]: x\in E\}$ in $X$ then has positive $1$-modulus and hence $X$ is minimal,  see \cite{BT01}.
	
	Since the graph of the 1-dimensional Brownian motion does not contain any rectifiable curves,  and the modulus of any family of such curves vanishes,  one cannot use Tyson's theorem to prove Theorem \ref{bg}.
	
	In \cite{Hak09} the second named author showed that an analogue of Tyson's theorem still holds if $X$ contains a \emph{sufficiently rich family $\mathcal{E}$ of minimal sets of conformal dimension one}.  To quantify the ``richness'' of such families of minimal sets, Fuglede's notion of modulus of families of measures \cite{Fug57} was used.
	Specifically,   Theorem 5.5 in \cite{Hak09} states that if $\mathcal{E}=\{E\}$ is a family of subsets of $X$, of conformal dimension one,  each of which supports a measure $\lambda_E$,  such that for every ball $B(x,r)\subset X$  we have
	\begin{align}\label{ineq:growth}
		\begin{split}
			\mu(B(x,r))&\lesssim r^d, \\
			\lambda_{E}(B(x,r))&\gtrsim r,  \forall E\in\mathcal{E},  \mbox{ and } x\in E,
		\end{split}
	\end{align}
	then a lower bound $\dim_C X\geq d$ can be obtained if
	\begin{align}\label{ineq:modpositive}
		\Mod_1(\{\lambda_E\})&>0.
	\end{align}
	
	One of our main results, Theorem \ref{fmodest},  generalizes the minimality theorems from \cite{Tyson} and \cite{Hak09} by requiring that conditions (\ref{ineq:growth}) hold only locally.  When applying Theorem \ref{modest} it is important to exhibit a family of (minimal) sets of conformal dimension $1$. A key result in this direction is Theorem \ref{cd1}, which extends theorems from \cite{Hak06} and \cite{Hak09} about minimal Cantor sets by providing many new examples of minimal sets of dimension 1. The sets in Theorem \ref{cd1} are not necessarily homeomorphic to the Cantor set but still have a sort of a hierarchical structure, cf. Section \ref{section:cantor}.
	Theorems \ref{fmodest} and \ref{cd1} allow us to obtain many new examples of minimal spaces,  including minimal sets which are graphs of continuous functions of any dimension $d\in[1,2]$,  self-affine sets,  and the Brownian graph. Next we describe these applications in more detail.
	
	\subsection{Minimality of Bedford-McMullen sets with uniform fibers}

	We say a set $K\subset[0,1]^2$ is a \emph{Bedford-McMullen set} if it can be constructed as follows.
	
	Suppose  $1{<}\ell{<}m$ are positive integers.  Divide the unit square $[0,1]^2$ into $m\times \ell$ congruent closed rectangles of width $1/m$ and height $1/\ell$ with disjoint interiors. Choose a pattern/subset $D_1\subset \{1,\ldots,m\}\times\{1,\ldots,\ell\}$. Keep the closed rectangles $K_{1,j}$, with $1{<}j{<}\mathrm{Card}(D_1)$, corresponding to $D_1$, and remove the rest.  Choose  $D_2\subset \{1,\ldots,m\}\times\{1,\ldots,\ell\}$. Divide each of the rectangles $K_{1,j}$ into $m\times\ell$ congruent rectangles with disjoint interiors of width $1/m^2$ and height $1/\ell^2$, keep the ones  corresponding to the pattern $D_2$, and remove the rest.   Denote by $K_2$ the union of these rectangles of generation $2$. 
	
	Continuing by induction we suppose that the set $K_n$ obtained at step $n$  is a union of closed rectangles of width $1/m^n$ and height $1/\ell^n$. Each of these rectangles are then again divided into $m\times \ell$ rectangles and we keep the ones corresponding to a pattern $D_{n+1}\subset\{1,\ldots,m \}\times\{1\ldots,\ell\}$. The union of these rectangles is denoted by $K_{n+1}$. We finally define $$K=K(\{D_i\})= \bigcap_{n=0}^\infty K_n.$$
	
	We say that a Bedford-McMullen set $K\subset[0,1]^2$ has \emph{uniform fibers} if there is an integer $k$ between  $1$ and $m$, such that $D_i$ has exactly $k$ elements in each row, for all $i\geq 1$.
	See Figure \ref{minimalgraph} for an example of a Bedford-McMullen set with $m=4$ and $k=\ell=2$. 
	
	In Section \ref{MSS} we prove the following.

	\begin{figure}[t]
		\centering
		\includegraphics[height=1.2in]{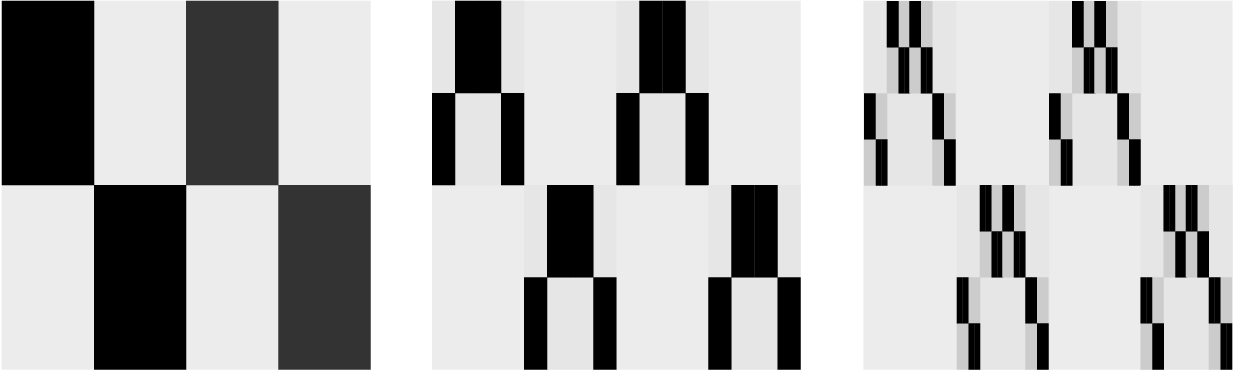}
		\caption{\small{The first three generations of a Bedford-McMullen type set with uniform fibers, with $m=4$, $k=\ell=2$.  At each step one chooses $4$ rectangles out of $8$ possible ones, so that $2$ are chosen from each row.  The resulting set $K$ has Hausdorff dimension $3/2$, and \emph{every} nontrivial intersection of $K$ with a horizontal line is of Hausdorff dimension $1/2$.  By Theorem \ref{thm:minimalBM}  conformal dimension of $K$ is also $3/2$. }} \label{minimalgraph}
	\end{figure}
	
	\begin{Theorem}\label{thm:minimalBM}
		If  $K\subset[0,1]^2$  is a Bedford-McMullen set with uniform fibers then it is minimal for conformal dimension. Thus,
		\begin{align}\label{eqn:dimK}
			\dim_C K = \dim_H K = 1+\log_m k.
		\end{align}
	\end{Theorem}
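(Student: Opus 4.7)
The strategy is to verify the hypotheses of Theorem~\ref{fmodest} by exhibiting a sufficiently rich family of minimal dimension-one subsets of $K$. Under the uniform fiber condition, each point $p\in K$ is determined by a row sequence $(y_1,y_2,\ldots)\in\{1,\ldots,\ell\}^{\mathbb{N}}$ encoding $y=\sum_i y_i/\ell^i$, together with a column choice sequence $\alpha=(\alpha_1,\alpha_2,\ldots)\in\{1,\ldots,k\}^{\mathbb{N}}$, where $\alpha_n$ picks one of the $k$ admissible columns of row $y_n$ in the pattern $D_n$. Writing the corresponding parametrization as $\Psi\colon[0,1]\times\{1,\ldots,k\}^{\mathbb{N}}\to K$, $\Psi(y,\alpha)=(g_\alpha(y),y)$, I set $\Phi_\alpha:=\Psi(\cdot,\alpha)$, $E_\alpha:=\Phi_\alpha([0,1])\subset K$, and define the measures $\lambda_\alpha:=(\Phi_\alpha)_*\mathcal{L}|_{[0,1]}$ and $\mu:=\Psi_*(\mathcal{L}\times\nu)=\int\lambda_\alpha\,d\nu(\alpha)$, where $\nu$ is the uniform Bernoulli probability on $\{1,\ldots,k\}^{\mathbb{N}}$.

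For the growth hypotheses (\ref{ineq:growth}) with $d=1+\log_m k$: when $r\approx\ell^{-n}$, the ball $B(x,r)$ meets $O(1)$ horizontal row-strips of height $\ell^{-n}$, and the horizontal slice of $K$ inside each such strip is a uniformly $\log_m k$-regular $k$-out-of-$m$ Cantor-type set, so $\mu(B(x,r))\asymp r\cdot r^{\log_m k}=r^d$. For $\lambda_\alpha$ with $x=\Phi_\alpha(y_0)\in E_\alpha$, the identity $\pi_y\Phi_\alpha(y)=y$ gives $\lambda_\alpha(B(x,r))\le 2r$ trivially; for the matching lower bound, pick $n$ with $m^{-n}\le r$ (so $\ell^{-n}\ge r$) and note that whenever $y$ lies in the length-$\ell^{-n}$ level-$n$ row-cell of $y_0$ we have $|g_\alpha(y)-g_\alpha(y_0)|\le m^{-n}\le r$. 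Consequently $\Phi_\alpha$ maps the portion of $(y_0-r,y_0+r)$ inside that row-cell into $B(x,r)$, and this portion has length $\gtrsim r$.

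The main technical obstacle is to show that each $E_\alpha$ is itself minimal for conformal dimension with $\dim_C E_\alpha=1$. Since $\Phi_\alpha$ is discontinuous at $\ell$-adic rationals, $E_\alpha$ contains no rectifiable sub-arc and the classical Tyson criterion is unavailable. On the other hand, $E_\alpha$ admits a very clean hierarchical description: at each level $n$ it decomposes as the disjoint union of $\ell^n$ affine sub-copies, one for each prefix $(y_1,\ldots,y_n)$, each contained in a rectangle of width $m^{-n}$ and height $\ell^{-n}$, and itself admitting the same level-$(n+1)$ subdivision. This is precisely the type of hierarchical data to which Theorem~\ref{cd1} applies (the extension of the Cantor-set minimality results of \cite{Hak06,Hak09}), which delivers $\dim_C E_\alpha=\dim_H E_\alpha=1$.

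Positivity of Fuglede's $1$-modulus is then essentially automatic: if $\rho\ge 0$ is Borel with $\int\rho\,d\lambda_\alpha\ge 1$ for every $\alpha$, then by the disintegration $\mu=\int\lambda_\alpha\,d\nu(\alpha)$ and Fubini,
\[
\int_K\rho\,d\mu \;=\; \int_{\{1,\ldots,k\}^{\mathbb{N}}}\!\!\int_K\rho\,d\lambda_\alpha\,d\nu(\alpha) \;\ge\; \int d\nu \;=\; 1,
\]
so $\Mod_1(\{\lambda_\alpha\})\ge 1$. Theorem~\ref{fmodest} then yields $\dim_C K\ge d$, and combined with the trivial $\dim_C K\le\dim_H K=1+\log_m k$ we obtain $\dim_C K=1+\log_m k$.
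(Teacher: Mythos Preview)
Your overall strategy coincides with the paper's: the same measure $\mu$ (your $\Psi_*(\mathcal L\times\nu)$ assigns mass $(k\ell)^{-n}$ to each generation-$n$ rectangle, which is exactly the paper's $\mu$), the same vertical Cantor sets, the same measures $\lambda_E$, and the same appeal to Theorem~\ref{fmodest}. Your $\Mod_1$ computation is in fact cleaner than the paper's: by restricting to the subfamily $\{E_\alpha:\alpha\in\{1,\ldots,k\}^{\mathbb N}\}$, which carries a genuine product parametrization, the disintegration $\mu=\int\lambda_\alpha\,d\nu(\alpha)$ plus Tonelli gives $\Mod_1\ge 1$ in one line. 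The paper instead works with the larger family of all vertical Cantor sets (where the column choice may vary from rectangle to rectangle within a generation) and proves $\Mod_1>0$ by a greedy selection argument together with Vitali--Carath\'eodory (Lemma~\ref{lemma:construction}); your shortcut avoids this.

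Two points need tightening. First, Theorem~\ref{cd1} as stated is for \emph{binary} hierarchical structures ($2^i$ pieces at level $i$), whereas your $E_\alpha$ has $\ell$-ary branching. The paper handles this by proving an $\ell$-ary analogue of the key measure inequality directly (Lemma~\ref{lemma:vertical-cantor}), using that consecutive siblings have relative distance $\lesssim (\ell/m)^n\to 0$; you should either cite that lemma or indicate the straightforward adaptation. Second, your argument for $\lambda_\alpha(B(x,r))\gtrsim r$ is not quite right: the implication ``$m^{-n}\le r$ so $\ell^{-n}\ge r$'' fails in general, and even when it holds, the intersection of $(y_0-r,y_0+r)$ with the level-$n$ row cell can be arbitrarily short if $y_0$ is near the cell boundary. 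The clean fix (as in the paper) is to pick $n$ with $\ell^{-(n+1)}<r\le\ell^{-n}$ and note that the entire generation-$(n+2)$ rectangle through $x$ lies in $B(x,r)$, giving $\lambda_\alpha(B(x,r))\ge\ell^{-(n+2)}\ge r/\ell^2$. Finally, Theorem~\ref{fmodest} also requires $\mu$ to be doubling; you should note this (the paper verifies it via approximate squares).
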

	
	If $D_i=D$ for every $i\geq 1$, following the notation in \cite[p.120]{BP17} we denote the sets constructed above by $K(D)$, even if $D$ does not have the same number of elements in each row. These sets are examples of self-affine fractals which  have been studied extensively since the famous papers of Bedford \cite{Bedford} and McMullen \cite{McMullen} and are usually called  \emph{Bedford-McMullen carpets}, cf.  \cite{Fraser} (note that in \cite{BP17} they are called McMullen sets). One important feature of Bedford-McMullen carpets is that in general their Hausdorff and Minkowski dimensions do not coincide, see e.g. the example after Theorem 4.2.1 in \cite{BP17}.  However, if the pattern $D\subset \{1,\ldots,m\}\times\{1,\ldots,\ell\}$ has the same number of elements in all the rows, in which case $K(D)$ is said to be a \emph{Bedford-McMullen carpet with uniform fibers}, we do have that $K(D)$ has the same Hausdorff and Minkowski dimensions.  Moreover, the following is an immediate consequence of Theorem \ref{thm:minimalBM}.
	\begin{Corollary}\label{BMSM}
		Every Bedford-McMullen carpet with uniform fibers $K(D)$ is minimal for conformal dimension.
	\end{Corollary}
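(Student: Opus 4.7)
The plan is to exhibit $K(D)$ as a special case of the setup of Theorem \ref{thm:minimalBM} and then apply that theorem directly. In the inductive construction of a general Bedford-McMullen set $K(\{D_i\})$, we take the constant choice $D_i = D$ for every $i \geq 1$; this yields $K(\{D_i\}) = K(D)$. The hypothesis that $K(D)$ is a Bedford-McMullen carpet with uniform fibers means, by definition, that the pattern $D \subset \{1,\ldots,m\} \times \{1,\ldots,\ell\}$ has the same number of elements $k$ in every row. Hence the constant sequence $D_i \equiv D$ trivially satisfies the uniform fibers condition from the statement of Theorem \ref{thm:minimalBM}: there is an integer $k$ with $1 \leq k \leq m$ such that each $D_i$ has exactly $k$ elements in every row.

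Applying Theorem \ref{thm:minimalBM} to this sequence then gives $\dim_C K(D) = \dim_H K(D) = 1 + \log_m k$, which also agrees with the classical Bedford--McMullen formula for the Hausdorff (and Minkowski) dimension of a carpet with uniform fibers. No genuine obstacle arises: the entire content of the corollary is the observation that $K(D)$ lies in the scope of Theorem \ref{thm:minimalBM}, and this is immediate from comparing the two notions of uniform fibers. All of the real work, namely constructing a rich family of minimal sets of conformal dimension one inside $K$ and verifying the modulus lower bound via Theorems \ref{fmodest} and \ref{cd1}, has already been carried out in establishing Theorem \ref{thm:minimalBM}.
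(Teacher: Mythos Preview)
Your proposal is correct and matches the paper's own treatment: the corollary is stated as an immediate consequence of Theorem \ref{thm:minimalBM}, obtained exactly by specializing the sequence $\{D_i\}$ to the constant pattern $D_i \equiv D$. There is nothing to add.
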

	
	In \cite{Mac11} Mackay proved that all Bedford-McMullen carpets are either minimal (when projection in one direction covers an interval) for \emph{conformal Assouad dimension} or have conformal dimension $0$. See \cite{Mac11} for the precise definition of Assouad dimension.  In the case of the Bedford-McMullen carpets with uniform fibers, Hausdorff dimension is equal to Minkowski dimension and Assouad dimension. It then follows from Corollary \ref{BMSM} that conformal dimension, conformal Minkowski dimension, and conformal Assouad dimension of the Bedford-McMullen carpets with uniform fibers are all equal to each other. It is not known if the analogue of Mackay's theorem holds for conformal dimension in the case of Bedford-McMullen carpets $K(D)$ which do not have uniform fibers (and in particular if $1<\dim_H K(D)<\dim_M K(D)$). On a related note, we remark that Eriksson-Bique recently proved  that conformal dimension and conformal Assouad dimension coincide for quasi-self-similar metric spaces and are also equal to the so called Ahlfors regular conformal dimension, see \cite{Eriksson-Bique}.
	
	From the proof of Theorem \ref{thm:minimalBM} it would be clear that it works even if in the construction of $K$ above one replaces each generation $n$ rectangle by an \emph{arbitrary} pattern $D\subset \{1,\ldots,m\}\times\{1,\ldots,\ell\}$  of $k\ell$ rectangles, provided that there are $k$ rectangles in each row, see also Remark \ref{remark:different-patterns}. In other words, the replacement patterns can be different even in the same generation. This would allow us to construct minimal sets which are graphs of continuous functions.  Specifically, in Section \ref{MSS} we prove the following.
	
	\begin{Corollary}\label{thm:mg}
		For every $d\in[1,2]$ there is a continuous function $f:[0,1]\to\mathbb{R}$ such that the graph $\G(f)$ of $f$ is minimal for conformal dimension,  and $\dim_C \Gamma(f) = d$.
	\end{Corollary}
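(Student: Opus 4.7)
The plan is to specialize the Bedford-McMullen construction of Theorem \ref{thm:minimalBM} to patterns that force the limit set to be the graph of a continuous function, and then to vary the combinatorial parameters across generations to sweep out every dimension $d \in [1,2]$.

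Fix positive integers $k \ge 1$ and $\ell \ge 2$ and set $m := k\ell$. Within the class of uniform-fiber patterns $D \subset \{1,\dots,m\} \times \{1,\dots,\ell\}$ having $k$ rectangles per row, restrict further to patterns that place exactly one rectangle in each column and whose row-index function $i \mapsto r_i$ on $\{1,\dots,m\}$ is $1$-Lipschitz (consecutive columns pick the same or an adjacent row). Since $k\ell = m$, such patterns are abundant; one can construct them explicitly as back-and-forth walks on $\{1,\dots,\ell\}$ that visit each level exactly $k$ times. Using patterns from this sub-class, and arranging the choices at successive generations to be compatible along the common column-boundaries, the Bedford-McMullen construction yields a nested sequence $K_n$ of $m^n$ horizontally adjacent rectangles of width $1/m^n$ and height $1/\ell^n$. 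The limit set $K = \bigcap_n K_n$ is then the graph of a function $f\colon [0,1] \to [0,1]$, and the $1$-Lipschitz and compatibility conditions guarantee that the oscillation of $f$ on any interval of width $1/m^n$ is at most $O(1/\ell^n) \to 0$, so $f$ is continuous.

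By Theorem \ref{thm:minimalBM} in the generalized form of Remark \ref{remark:different-patterns}, $\Gamma(f) = K$ is minimal for conformal dimension and $\dim_C \Gamma(f) = 1 + \log_m k$. Varying $(k, \ell)$ subject to $\ell \ge 2$ realizes a dense set of values in $[1,2)$. To hit an arbitrary $d \in (1,2)$ exactly, I would interleave generations built from two different parameter pairs in a proportion tuned via the standard McMullen mass-distribution argument so that the Hausdorff dimension of $K$ equals $d$; the graph property and continuity persist because each generation still fills each column exactly once in a $1$-Lipschitz pattern. The endpoint cases are handled separately: $d = 1$ by $f \equiv 0$, and $d = 2$ by taking $\ell_n = 2$ and $k_n \to \infty$, so that $\dim_H \Gamma(f) = \lim_n (1 + \log_{m_n} k_n) = 2$.

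The principal obstacle is confirming that the minimality conclusion of Theorem \ref{thm:minimalBM} survives the two extensions needed here: patterns that vary within a single generation (promised, in principle, by Remark \ref{remark:different-patterns}), and combinatorial parameters $(m_n, \ell_n, k_n)$ that vary across generations (needed for irrational $d$ and for $d = 2$). I would revisit the proof of Theorem \ref{thm:minimalBM}, which via Theorems \ref{cd1} and \ref{fmodest} rests on producing a rich family of minimal sets of conformal dimension one (the vertical fibers of $K$) together with a positive lower bound on $\Mod_1$ of the corresponding measure family, and verify that both the fiber structure and the modulus estimate remain valid under scale-dependent grids, provided $\log k_n / \log m_n$ averages to the target value $d - 1$ in an appropriate Ces\`aro sense.
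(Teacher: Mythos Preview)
Your construction of minimal continuous graphs for the dense set of dimensions $1+\log_{k\ell}k$ is correct and essentially the same as the paper's: the paper encodes the ``one rectangle per column, adjacent-row'' patterns via three explicit $\ell\times m$ matrices $A_1,A_2,A_3$ with a labeling scheme that forces boundary compatibility, but the content is identical to what you describe.

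The divergence, and the gap in your proposal, is at the step from a dense set of dimensions to an arbitrary $d\in[1,2]$. You propose to vary $(m_n,\ell_n,k_n)$ across generations and then re-verify the entire modulus machinery of Theorems~\ref{cd1} and~\ref{fmodest} under scale-dependent grids. You correctly flag this as the principal obstacle, and it is a real one: the doubling of $\mu$, the diameter comparability in Lemma~\ref{lemma:vertical-cantor}, and the relative-distance decay for the vertical Cantor sets all use fixed $(m,\ell)$, and your case $d=2$ with $k_n\to\infty$ is especially delicate since the aspect ratios $m_n/\ell_n$ become unbounded. None of this is carried out.

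The paper sidesteps all of this with a one-line concatenation trick. Pick minimal graphs $K_i'$ with $\dim_C K_i'=d_i\nearrow d$ and $f_i(0)=f_i(1)=0$, and glue the rescaled copies $\tfrac{1}{2^i}K_i'+1-\tfrac{1}{2^{i-1}}$ end to end over $[0,1)$. The result is still a continuous graph; $\dim_H$ of the union is $\sup_i d_i=d$ by countable stability of Hausdorff dimension, while $\dim_C$ of the union is at least $\dim_C K_i'=d_i$ for every $i$ by monotonicity, hence exactly $d$. This requires no extension of Theorem~\ref{thm:minimalBM} whatsoever and handles all $d\in[1,2]$, including $d=2$, uniformly.
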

	
	\begin{figure}[tbp]
		\centering
		\includegraphics[height= 1.2 in]{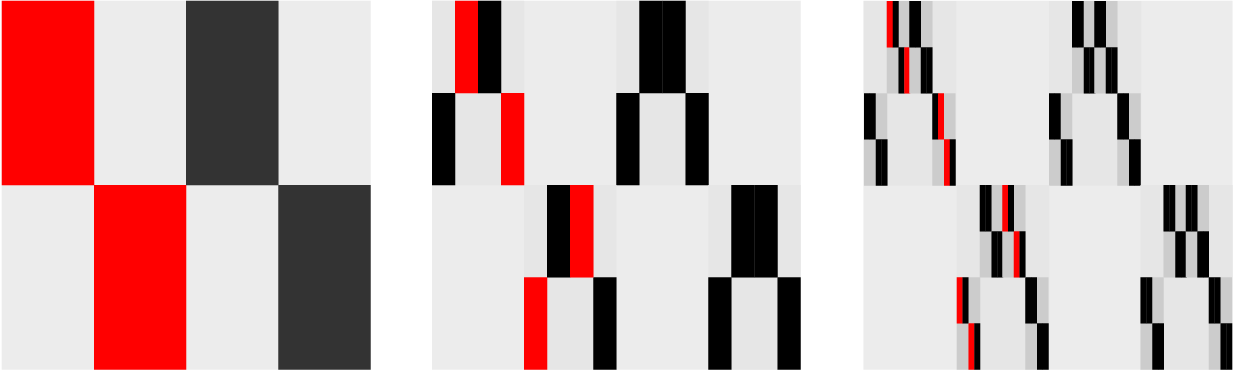}
		\caption{\small{To obtain the family  $\mathcal{E}$ of ``vertical'' Cantor sets in $K$ one has to choose one rectangle from each row at every step.  Every vertical Cantor set $E$ is minimal and has conformal dimension $1$ and is equipped with the measure $\lambda_E$ which is just the pullback of Lebesgue measure from $\{0\}\times [0,1]$ under the orthogonal projection onto $y$-axis.  The family $\{\lambda_E\}$ has positive modulus with respect to the natural measure $\mu$ on $K$.}}\label{figure:vertical-cantor-set}
	\end{figure}
	
	To prove minimality of sets $K$ in Section \ref{MSS} we observe that they contain large collections of minimal sets of conformal dimension $1$. Indeed,  choosing \emph{one}  rectangle (out of $k$ possible) of generation $n$ from each row at every step one obtains an uncountable collection of  \emph{vertical Cantor sets} in $K$, see Fig. \ref{figure:vertical-cantor-set}.  We prove that each such vertical Cantor set $E\subset K$ has conformal dimension $1$, see Lemma \ref{lemma:vertical-cantor1}.  Moreover, $E$  is equipped with a measure $\lambda_E$,  which is  the pullback of the Lebesgue measure under the horizontal projection (or equivalently the $\mathcal{H}^1$ measure restricted to $E$).  To apply Theorem \ref{fmodest} it is enough to show that there is a measure $\mu$ on $K$ such that $\mu(B_r)\lesssim r^d$, with $d=\dim_H K$, so that inequality (\ref{ineq:modpositive}) holds. Equivalently, one needs to show that if $\rho$ is a Borel function on $K$ such that $\int \rho d\lambda_E\geq 1$ for every vertical Cantor set $E$ in $K$,  then
	\begin{align}\label{ineq:intpositive}
		\int_K \rho d\mu>0.
	\end{align}
	It is not hard to see that if $\pi_y{:}(x,y)\mapsto(0,y)$ then for (\ref{ineq:intpositive}) to hold it is necessary for the push forward  measure $(\pi_y)_{*}(\mu)$ to be absolutely continuous with respect to the Lebesgue measure $\mathcal{L}^1$ on the vertical  interval $\{0\}\times [0,1]$.  Such a $\mu$ can be obtained easily by setting the measure of every $n$-block $Q\subset K_n$ to be the same, i.e.,  $\mu(Q)=1/m^n$.  It turns out that with this choice of $\mu$ and the family $\{\lambda_E\}$ as above, conditions of Theorem \ref{fmodest} are satisfied and hence $K$ is minimal.
	
	\subsection{Conformal dimension of the Brownian graph}\label{CDBG}
	Minimal sets of the previous section  have the special property that almost every nontrivial intersection of $K$ with a horizontal line has Hausdorff dimension equal to $\log_m k=\dim_H(K)-1$. This can be thought of as some type of a product-like structure on $K$ (at least  at the level of Hausdorff dimensions).  It turns out that the graph of Brownian motion has this property as well almost surely.  The details of the proof of Theorem \ref{bg} are quite involved and are carefully discussed in Sections \ref{BM} and \ref{BGMAS}. Here we give a rough sketch of the idea.
	
	The well-known dimension doubling theorem of Kaufman \cite[Theorem $9.28$]{MP10} states that almost surely for \emph{every} $a\in\mathbb{R}$ we have $\dim_H \left( \Gamma(W) \cap Z_a \right) = {1}/{2}$, where $Z_a$ is the horizontal line through $(0,a)$,  see \cite[Corollary $9.30$]{MP10}.  On the other hand,  $\dim_H\G(W)=3/2$ almost surely,  see \cite[Theorem $4.29$]{MP10}. Therefore, almost surely we have
	\begin{align}\label{equation:BMslices}
		\dim_H (\G(W)\cap Z_a) = \dim_H \G(W) - 1
	\end{align}
	for every $a\in\mathbb{R}$.
	
	Equation (\ref{equation:BMslices}) is one of the reasons that led us to suspect that $\Gamma(W)$ could be minimal for conformal dimension as well.  However,  there are immediate questions that have to be addressed in order to apply Theorem \ref{fmodest}. Namely, are there ``vertical'' subsets $E$, measures $\lambda_E$, and measure  $\mu$ on $\G(W)$ so that (\ref{ineq:growth}) and (\ref{ineq:modpositive}) hold?
	
	\begin{figure}[htbp]
		\centering
		\includegraphics[height= 1.2 in]{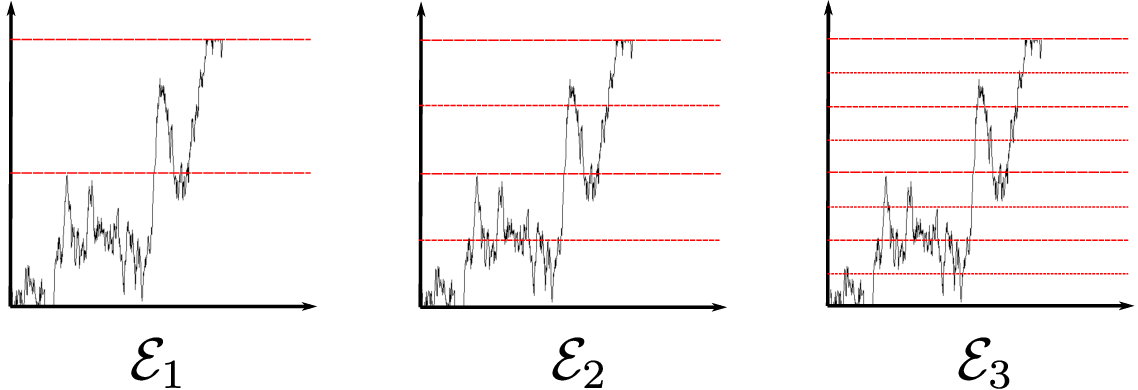}
		\caption{\small{ The family $\mathcal{E}$ of ``vertical'' subsets of the Brownian graph $\G(W)$ is constructed by intersecting $\G$ with horizontal dyadic strips as in the picture above and carefully selecting those parts in which Brownian sample path ``grows fast''. }}
	\end{figure}
	
	To mimic what was done for the Bedford-McMullen sets above we construct the subsets of $\G(W)$ by looking at the parts of the graph obtained by intersecting it with ``dyadic strips'', i.e.,
	\[
	\G(W)\cap\left\{(x,y) :  y \in\left(\frac{m}{2^n},\frac{m+1}{2^n}\right]\right\}.
	\]
	The resulting subsets are not as nicely behaved as in the case of $K$ above.  In fact we choose not all such intersections, but only those on which Brownian motion ``increases fast'',  see Section \ref{BGMAS} for details.  It turns out that it is possible to do this so that the resulting subsets of $\G(W)$ are of conformal dimension at least one. For this reason we provide quite general criteria for minimality of Cantor-like sets.  See Theorems \ref{cd1}, \ref{unioncd1} and \ref{unioncd1new}. These criteria allow us to construct  a family of vertical Cantor sets $\mathcal{E}$ in the Brownian graph $\G(W)$ which carry a family of measures $\mathbf{E}=\{\lambda_E\}_{E\in\mathcal{E}}$ satisfying the second condition in (\ref{ineq:growth}).
	
	Perhaps the most interesting property of the Brownian graph $\Gamma(W)$, from the point of view of this paper is the fact that there exists a measure $\mu$ on $\G(W)$ which ``sees'' the family $\mathbf{E}$ supported on vertical minimal subsets of $\Gamma(W)$, i.e. , for which (\ref{ineq:modpositive}) holds.  This measure $\mu$ is constructed using the so-called \emph{local time of $W\!(t)$ at level $a$}, denoted by  $L^{a}(t)$,  ``which measures the amount of time Brownian motion spends at  $a$''.  Local time $L^{a}(t)$ gives rise to a Borel measure on $\Gamma(W) \cap Z_a$ denoted by $l^{a}$, and integrating the latter with respect to $a$ gives the desired measure $\mu$ on $\G(W)$. That is, for every measurable set $A \subseteq \Gamma(W)$, we let
	\begin{equation}\label{spacemeasure}
		\mu\left( A \right) =  \int_{-\infty}^{\infty} l^a(A \cap Z_a) \ da.
	\end{equation}
	Thus, the ``product like structure'' of $\mu$ is built into its definition.
	
	The most technical part of the paper, Section \ref{BGMAS}, is devoted to proving various properties of the families of sets $\mathcal{E}$, families of measures $\mathbf{E}=\{\lambda_E\}_{E\in\mathcal{E}},$ and  the measure $\mu$. We are able to show that these quantities satisfy the conditions of Theorem \ref{fmodest}, and applying it we prove Theorem \ref{bg}.
	
	\subsection{Structure of the paper}
	The paper is organized as follows. In Section \ref{Preliminary}, we introduce basic notation, definitions, and properties that will be used in this paper. In Section \ref{CDM} we prove Theorem \ref{fmodest} which gives a general sufficient condition for minimality of a space in terms of Fuglede modulus of measures and weakens the conditions established previously in \cite{Hak09}. Also in this section we introduce the notions of sets equipped with a hierarchical structure and flatness and provide a sufficient condition for such sets to have conformal dimension at least 1, see Theorem \ref{cd1}. In Section \ref{MSS} we apply the general theory developed in Section \ref{CDM} to prove minimality of Bedford-McMullen set and carpets with uniform fibers, Theorem \ref{thm:minimalBM}. Here we also prove Corollary \ref{thm:mg}  by constructing explicit examples of minimal graphs of continuous functions over $[0,1]$.  In Section \ref{BM} we recall some of the notions  and properties of Brownian motion which are used in the proof of our main result later. Section \ref{BGMAS} is the technical core of the paper, and this is where we prove Theorem \ref{bg} by showing that we can apply Theorem \ref{fmodest} to the graph $\Gamma(W)$ of Brownian motion. In particular, in Section \ref{verticalCS} we provide a delicate construction of vertical Cantor sets in $\Gamma(W)$ and appropriate measures on them and check that these sets are minimal. In Section \ref{Conclusion} we record some observations and formulate several open problems about conformal dimension. 
	
	\subsection*{Acknowledgments}
	The authors would like to thank Malik Younsi for his helpful comments. They would also like to thank the anonymous referees, whose numerous substantive comments greatly improved the paper.
	
	\section{Preliminaries}\label{Preliminary}
	For the convenience of the reader, in this section we collect some of the basic notation, conventions,  and elementary results used in the paper.  These are quite standard and experts may want to skip them without much harm.
	\subsection{Notation and definitions}
	In this paper we write $C = C(a_1, \ldots, a_n)$ if a constant $C$ can be chosen to depend only on the parameters $a, \ldots, a_n$. By $\Card(A)$ we denote the cardinality of a set $A$.
	
	If there is a constant $C > 0$ such that $f(x) \leq C g(x)$ or $f(x) \geq C g(x)$, we may write $f(x) \lesssim g(x)$ {or} $f(x) \gtrsim g(x)$,
	respectively,  if the value of $C$ is not important.  If both inequalities hold we may write  $f(x )\asymp g(x)$ .
	
	By  $\mathbb{R}^n$ we will denote the $n$-dimensional Euclidean space, which will be equipped with the Lebesgue measure $\mathcal{L}^n$ and the standard Euclidean metric denoted by $| \ \cdot \ |$.
	
	We call an element of the family of intervals $\left\{\left[\frac{i}{2^n}, \frac{i+1}{2^n} \right]  \right\}_{i \in \mathbb{Z}}$ an \emph{$n^{\mathrm{th}}$-generation dyadic interval}.  Similarly,  an element of $\left\{\left[\frac{i}{2^n}, \frac{i+1}{2^n} \right] \times \left[\frac{j}{2^n}, \frac{j+1}{2^n} \right]  \right\}_{i,j \in \mathbb{Z}}$ will be called an \emph{$n^{\mathrm{th}}$-generation dyadic square}.
	
	The projections of $\mathbb{R}^2$ onto the $x$ and $y$ axes will be denoted by $\pi_x$ and $\pi_y$, respectively.  More precisely,
	$\pi_x((x,y)) = x$ {and} $\pi_y((x,y)) = y$.
	For $a\in\mathbb{R}$ we will denote by $Z_a$ the horizontal line passing through $(0,a)$, that is  $Z_a = \{(x,a): x \in \mathbb{R}\}$.
	
	Let $(X, d)$ be  a metric space.  We will denote by $B(x,r)$ the open ball in $X$ of radius $r>0$ centered at $x\in X$, i.e., $
	B(x,r) = \{y \in X: d(x,y) < r \}.$
	For a ball $B=B(x,r)\subseteq X$ and $\lambda>0$ we let $\lambda B = B(x,\lambda r)$.
	
	Given subsets $E$ and $F$ of $X$ and $r>0$,  we define the diameter of $E$ in $X$, the distance between subsets $E$ and $F$ of $X$, and $r$-neighborhood of $E$ in $X$ as follows:
	\begin{align*}
		\textrm{diam}(E) &=\sup\{d(x,y) : x,y \in E\},
		\\
		\textrm{dist}(E,F) &= \inf\{d(x,y): x \in E, y \in F\},
		\\
		N_r(E) &= \{x \in X: \exists \ y \in E \ \textrm{such that} \ d(x,y) < r\}.
	\end{align*}
	
	If $\textrm{diam}(E) ,\textrm{diam}(F) > 0$, the \emph{relative distance between $E$ and $F$} is the following quantity
	\begin{align}
		\Delta(E, F) = \frac{\textrm{dist}(E, F)}{\min\{\textrm{diam}(E), \textrm{diam}(F)\}}.
	\end{align}
	
	We will denote by $\mathcal{H}^\alpha$ the \emph{Hausdorff $\alpha$-measure} on $X$ for some $\alpha > 0$. More specifically, $\calH^\alpha(X)=\lim_{\epsilon\to0} \calH^\alpha_{\epsilon}(X)$, where
	\begin{align*}
		\calH^\alpha_{\epsilon}(X) = \inf \left\{ \sum_{i=1}^{\infty} \diam(E_i)^\alpha : X \subseteq \bigcup_{i=1}^{\infty} E_i, \, \diam(E_i) \leq \epsilon\right\}.
	\end{align*}
	
	Hausdorff dimension of $X$ is defined to be
	\[
	\dim_H(X) = \inf \{\alpha: \mathcal{H}^\alpha(X) = 0 \}.
	\]
	
	The Hausdorff measure on $X$ can be made more general by considering a \emph{gauge function}, i..e, a positive, increasing function $\phi$ on $[0, \infty)$ with $\phi(0) = 0$. We may associate to it the Hausdorff measure $\calH^\phi(X)=\lim_{\epsilon\to0} \calH^\phi_{\epsilon}(X)$, where
	\[
	\calH^\phi_{\epsilon}(X) = \inf \left\{ \sum_{i=1}^{\infty} \phi\left(\diam(E_i) \right) : X \subseteq \bigcup_{i=1}^{\infty} E_i, \, \diam(E_i) \leq \epsilon\right\}.
	\]
	
	The following theorem is often used to obtain lower bounds for the Hausdorff dimension of a metric space, see \cite[Lemma $1.2.8$]{BP17}.
	
	\begin{Theorem}[Mass Distribution Principle]\label{mdp}
		Let $(X, d_X, \mu)$ be a metric measure space where $\mu(X) > 0$. If there exists a constant $C = C(X)>0$ such that for any $U \subseteq X$, $\mu(U) \leq C (\diam(U))^\alpha$ for some $\alpha>0$, then $\dim_H(X) \geq \alpha$.
	\end{Theorem}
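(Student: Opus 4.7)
The plan is to apply the assumed measure bound to an arbitrary countable cover of $X$ and then take an infimum. More precisely, I would first fix $\epsilon>0$ and let $\{E_i\}_{i=1}^\infty$ be any countable cover of $X$ with $\diam(E_i)\le\epsilon$. Without loss of generality each $E_i$ may be taken to be a subset of $X$ (by intersecting with $X$, which does not increase diameters), so the hypothesis applies and gives $\mu(E_i)\le C\,\diam(E_i)^\alpha$.

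Next I would combine $\sigma$-subadditivity of $\mu$ with this pointwise bound to estimate the total mass of $X$ from above by the $\alpha$-sum of the cover:
\begin{align*}
0<\mu(X)\;\le\;\sum_{i=1}^\infty \mu(E_i)\;\le\;C\sum_{i=1}^\infty \diam(E_i)^\alpha.
\end{align*}
Taking the infimum over all such covers of $X$ yields $\calH^\alpha_\epsilon(X)\ge \mu(X)/C$, and since this lower bound is independent of $\epsilon$, letting $\epsilon\to 0$ gives $\calH^\alpha(X)\ge \mu(X)/C>0$.

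Finally, the definition $\dim_H(X)=\inf\{\beta:\calH^\beta(X)=0\}$ together with the standard monotonicity fact that $\calH^\alpha(X)>0$ forces $\calH^\beta(X)=\infty$ (hence in particular nonzero) for every $\beta<\alpha$, immediately yields $\dim_H(X)\ge\alpha$.

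There is no real obstacle here: the argument is just subadditivity of $\mu$ plus the hypothesis, applied uniformly in $\epsilon$. The only mild subtlety is the observation that it suffices to consider covers of $X$ by subsets of $X$ in order to invoke the assumed bound $\mu(U)\le C\diam(U)^\alpha$; this is harmless because replacing each $E_i$ by $E_i\cap X$ preserves the covering property and can only decrease diameters.
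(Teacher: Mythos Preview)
Your argument is correct and is exactly the standard proof of the Mass Distribution Principle. Note, however, that the paper does not actually give its own proof of this statement: Theorem~\ref{mdp} is recorded in the preliminaries with a reference to \cite[Lemma~1.2.8]{BP17}, so there is nothing in the paper to compare your proof against. What you wrote is precisely the usual argument one finds in that reference (and elsewhere): subadditivity of $\mu$ applied to an arbitrary $\epsilon$-cover, yielding $\calH^\alpha_\epsilon(X)\ge \mu(X)/C$ uniformly in $\epsilon$, hence $\calH^\alpha(X)>0$ and $\dim_H(X)\ge\alpha$.
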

	
	Let $(X, \mu)$ be a measurable space with $\mu(X) < \infty$, then
	\[
	\intbar_{X} f d\mu = \frac{1}{\mu(X)}\int_X f d \mu.
	\]
	
	We say a sequence of measures $\mu_n$ \emph{converges weakly} to a measure $\mu$ on $X$ if for every continuous compactly
	supported function $f : X \to \mathbb{R}$, we have
	\[
	\int_X f d\mu_n \to \int_X f  d\mu.
	\]
	
	A measurable space $(X, \mu)$ is called \emph{doubling} if there exists a  constant $C \geq 1$ such that for any $x \in X$ and $r > 0$, $\mu(B(x,r)) \leq C \mu(B(x, r/2))$.
	
	\subsection{Covering lemma and measure theory}
	The following lemma is the basic covering lemma and reader may refer to \cite[Theorem~$1.2$]{Hei01} for a proof.
	\begin{Lemma}\label{cl}
		Every family $\mathcal{B}$ of balls of uniformly bounded diameter in a metric space $X$ contains a pairwise disjointed subfamily $\mathcal{B}_1$ such that
		\[
		\bigcup_{B \in \mathcal{B}}B \subseteq \bigcup_{B_1 \in \mathcal{B}_1}5B_1.
		\]
	\end{Lemma}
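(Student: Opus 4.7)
The plan is to use the classical stratification-by-radius trick together with a maximality (Zorn's lemma) argument, which is the standard proof of the $5r$-covering theorem. I regard each element of $\calB$ as a ball equipped with a specified center and radius, so expressions like $\mathrm{rad}(B)$ and $5B$ are unambiguous. Let $R = \sup\{\mathrm{rad}(B) : B \in \calB\}$, which is finite by hypothesis. Stratify $\calB$ by setting
\begin{equation*}
\calB^{(j)} = \left\{B \in \calB : R/2^{j} < \mathrm{rad}(B) \leq R/2^{j-1}\right\}, \qquad j = 1, 2, \ldots,
\end{equation*}
so that $\calB = \bigsqcup_{j \geq 1} \calB^{(j)}$.

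Next I would select the subfamily inductively across the layers. At stage $j = 1$, by Zorn's lemma applied to the partially ordered set of pairwise disjoint subfamilies of $\calB^{(1)}$, pick a maximal pairwise disjoint subfamily $\calB_1^{(1)}$. Having chosen $\calB_1^{(1)}, \ldots, \calB_1^{(j-1)}$, choose $\calB_1^{(j)}$ to be a maximal pairwise disjoint subfamily of
\begin{equation*}
\left\{B \in \calB^{(j)} : B \cap B' = \emptyset \text{ for every } B' \in \calB_1^{(1)} \cup \cdots \cup \calB_1^{(j-1)}\right\}.
\end{equation*}
Set $\calB_1 = \bigcup_{j \geq 1} \calB_1^{(j)}$. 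By construction $\calB_1$ is pairwise disjoint, since within any single layer disjointness is built into the Zorn step, and a ball from layer $j$ is chosen only when it is already disjoint from everything selected at earlier layers.

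The last step is the size-and-coverage estimate. Fix any $B = B(x, r) \in \calB$; then $B \in \calB^{(j)}$ for exactly one $j$. By maximality of $\calB_1^{(j)}$, either $B \in \calB_1^{(j)}$ or $B$ must meet some $B' = B(x', r') \in \calB_1^{(1)} \cup \cdots \cup \calB_1^{(j)}$; in either case $B \cap B' \neq \emptyset$ for some selected $B'$ with $\mathrm{rad}(B') > R/2^{j} \geq r/2$, i.e.\ $r < 2r'$. Picking $y \in B \cap B'$, for any $z \in B$ the triangle inequality gives
\begin{equation*}
d(z, x') \leq d(z, x) + d(x, y) + d(y, x') < r + r + r' < 4r' + r' = 5r',
\end{equation*}
so $B \subseteq 5B'$. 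This yields the desired containment $\bigcup_{B \in \calB} B \subseteq \bigcup_{B' \in \calB_1} 5B'$.

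The only subtle point, and the main conceptual obstacle, is the use of Zorn's lemma at each layer, since the family $\calB$ is not assumed countable; one should verify that the union of a chain of pairwise disjoint subfamilies is again pairwise disjoint (immediate, as any two balls in the union lie together in some member of the chain). Besides that, the argument is entirely routine: the stratification is designed exactly so that any selected ball intersecting a later ball $B$ has radius at least half of $B$'s, which is precisely what powers the factor $5$ in the conclusion.
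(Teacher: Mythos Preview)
Your proof is correct. The paper does not actually prove this lemma; it simply cites \cite[Theorem~1.2]{Hei01} for the proof, and the argument you give is essentially the standard one found there (stratify by radius, apply Zorn's lemma within each layer, then use the triangle inequality with the $r<2r'$ comparison to get the factor~$5$).
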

	
	The following result  is known as the  \emph{Bojarski lemma} and is often used to estimate modulus from above, cf., \cite[Lemma $4.2$]{Boj88}.
	
	\begin{Theorem}[Bojarski]\label{Lpinequality}
		Suppose that $\mathcal{B} = \{B_i\}_{i=1}^\infty$ is a countable collection of balls in a doubling measure space $(X ,\mu)$ and that $a_i \geq 0$ are real numbers, then
		\[
		\int_X\left( \sum_{\mathcal{B}} a_i \chi_{\lambda B_i} \right)^p d \mu \leq C(\lambda, p, \mu) \int_X\left( \sum_{\mathcal{B}} a_i \chi_{B_i} \right)^p d \mu
		\]
		for $1 < p < \infty$ and $\lambda > 1$.
	\end{Theorem}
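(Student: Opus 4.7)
The plan is to obtain the $L^p$ inequality by duality from a pointwise bound relating the $\lambda$-dilated sum to the Hardy--Littlewood maximal function. Write $F:=\sum_i a_i\chi_{B_i}$ and $F_\lambda:=\sum_i a_i\chi_{\lambda B_i}$, so that the claim is $\|F_\lambda\|_{L^p(\mu)}\leq C(\lambda,p,\mu)\|F\|_{L^p(\mu)}$. We may assume $F\in L^p(\mu)$, otherwise the inequality is vacuous. Since $1<p<\infty$, $L^p$-$L^{p'}$ duality reduces the claim to proving
\[
\int_X F_\lambda\cdot g\,d\mu \leq C(\lambda,p,\mu)\,\|F\|_{L^p}\|g\|_{L^{p'}}
\]
for every nonnegative $g\in L^{p'}(\mu)$, and the left side expands as $\sum_i a_i\int_{\lambda B_i} g\,d\mu$.

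The geometric heart of the argument is the following observation: if $B_i$ has center $x_i$ and radius $r_i$ and $x\in B_i$, then $\lambda B_i\subseteq B(x,(1+\lambda)r_i)\subseteq B(x_i,(2+\lambda)r_i)$, so by iterating the doubling property of $\mu$ a bounded number of times (depending only on $\lambda$ and the doubling constant) one obtains $\mu(B(x,(1+\lambda)r_i))\leq C(\lambda,\mu)\mu(\lambda B_i)$. This yields the pointwise estimate
\[
\frac{1}{\mu(\lambda B_i)}\int_{\lambda B_i} g\,d\mu \leq C(\lambda,\mu)\,Mg(x)\quad\text{for every }x\in B_i,
\]
where $Mg$ denotes the Hardy--Littlewood maximal function of $g$ on $(X,d,\mu)$. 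Multiplying by $\chi_{B_i}(x)$, integrating in $x$, and using doubling once more to compare $\mu(\lambda B_i)$ with $\mu(B_i)$, one arrives at the key per-ball bound
\[
a_i\int_{\lambda B_i} g\,d\mu \leq C(\lambda,\mu)\,a_i\int_{B_i} Mg\,d\mu.
\]

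Summing over $i$ and exchanging sum with integral by Tonelli gives $\int_X F_\lambda\cdot g\,d\mu \leq C(\lambda,\mu)\int_X Mg\cdot F\,d\mu$, which by H\"older's inequality is at most $C(\lambda,\mu)\|Mg\|_{L^{p'}}\|F\|_{L^p}$. The classical Hardy--Littlewood maximal inequality on doubling metric measure spaces (proved via a Vitali-type covering argument based on Lemma \ref{cl}) gives $\|Mg\|_{L^{p'}(\mu)}\leq C(p',\mu)\|g\|_{L^{p'}(\mu)}$ for every $1<p'<\infty$, i.e.\ for every $1<p<\infty$, and combining constants completes the proof. No step constitutes a substantive obstacle; the main care required is the bookkeeping of doubling constants to verify that all of them depend solely on $\lambda$, $p$, and $\mu$, and not on the particular configuration of the balls $B_i$ or the coefficients $a_i$.
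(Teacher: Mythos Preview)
The paper does not give its own proof of this theorem; it states it as a known result and cites \cite[Lemma~4.2]{Boj88}. Your argument via $L^p$--$L^{p'}$ duality and the Hardy--Littlewood maximal inequality on doubling spaces is correct and is essentially the standard proof (and indeed the one in Bojarski's paper). The geometric containments and doubling comparisons you record are accurate, and the application of the maximal theorem in $L^{p'}$ for $1<p'<\infty$ is exactly what is needed; nothing further is required.
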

	
	Let $f$ be a real-valued function on a topological space $X$. If $\{x : f(x) > a\}$ is open for every $a$, $f$ is said to be \emph{lower semicontinuous}. Equivalently, $f$ is lower semicontinuous if and only if for any $x_0 \in X$,
	\[
	\liminf _{x\to x_0}f(x) \geq f(x_0).
	\]
	
	If $\{x : f(x) < a\}$ is open for every $a$, $f$ is said to be \emph{upper semicontinuous}. Equivalently, $f$ is upper semicontinuous if and only if for any $x_0 \in X$,
	\[
	\limsup _{x\to x_0}f(x)\leq f(x_0).
	\]
	
	A Borel measure $\mu$ on a locally compact Hausdorff space $X$ is a \emph{Radon measure} if
	\begin{enumerate}
		\item $\mu$ is finite on any compact set;
		
		\item $\mu$ is \emph{inner regular} on any open set $U$, i.e.,
		\[
		\mu(U) = \sup \left\{ \mu(K): K \subset E, K \ \textnormal{compact}  \right\};
		\]
		
		\item $\mu$ is \emph{outer regular} on any Borel set $E$, i.e.,
		\[
		\mu(E) = \sup \left\{ \mu(U): U \supset E, K \ \textnormal{open}  \right\}.
		\]
	\end{enumerate}
	
	A set  is called \emph{$\sigma$-compact} if it is a countable union of compact sets. Let  $X$ be a locally compact Hausdorff space such that every open set in $X$ is $\sigma$-compact. Then any Borel measure on $X$ with finite measure on every compact set is a Radon measure, see \cite[Theorem~$2.18$]{Rud86}.
	
	The following theorem tells that any real-valued integrable function can be approximated by an upper semicontinuous and a lower semicontinuous function, see \cite[Theorem $2.25$]{Rud86}.
	
	\begin{Theorem}[Vitali--Carath\'eodory]\label{VCT}
		Let X be a locally compact Hausdorff space equipped with a Radon measure $\mu$. Suppose $f: X \to \mathbb{R}$ is a real-valued integrable function on a measure space $(X, \mu)$, then for any $\eps > 0$ there exist functions $u$ and $v$ on $X$ such that $u \leq f \leq v$ where $u$ is upper semicontinuous and $v$ is lower semicontinuous, and
		\[
		\int_X (v-u) d\mu < \eps.
		\]
	\end{Theorem}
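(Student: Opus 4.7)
The plan is to follow the classical Rudin-style argument: first reduce to the case $f \geq 0$, then decompose $f$ as a countable sum of scaled indicators of measurable sets, and finally use inner and outer regularity of the Radon measure $\mu$ to replace each indicator by a pair of indicators of a compact subset and an open superset whose $\mu$-measures are close enough.

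First I would assume $f \geq 0$ and write $f$ as an $L^1$-convergent series $f = \sum_{i=1}^\infty c_i \chi_{E_i}$ with $c_i > 0$ and $E_i$ Borel of finite measure. This is standard: take an increasing sequence of nonnegative simple functions $s_n \nearrow f$ and set $s_0 = 0$, then each difference $s_n - s_{n-1}$ is itself a finite nonnegative combination of indicators of Borel sets, and concatenating these gives the desired series. By monotone convergence, $\sum_i c_i \mu(E_i) = \int f \, d\mu < \infty$.

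Next, given $\epsilon > 0$, I would invoke the Radon property: for each $i$, inner regularity on $E_i$ (which follows from the hypotheses stated in the excerpt, since on a $\sigma$-compact open neighborhood the Borel measure is Radon) produces a compact $K_i \subseteq E_i$ with $\mu(E_i \setminus K_i) < \epsilon/(c_i 2^{i+1})$, and outer regularity produces an open $U_i \supseteq E_i$ with $\mu(U_i \setminus E_i) < \epsilon/(c_i 2^{i+1})$. Choose $N$ so that $\sum_{i>N} c_i \mu(E_i) < \epsilon/2$, and set
\begin{align*}
v = \sum_{i=1}^\infty c_i \chi_{U_i}, \qquad u = \sum_{i=1}^N c_i \chi_{K_i}.
\end{align*}
Since the $c_i$ are nonnegative, $v$ is a supremum of lower semicontinuous functions and hence lower semicontinuous (partial sums are lsc as positive combinations of $\chi_{U_i}$, and the supremum of lsc functions is lsc). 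The function $u$ is upper semicontinuous as a finite sum of indicators of closed sets with positive coefficients. The inequality $u \leq f \leq v$ is immediate from $K_i \subseteq E_i \subseteq U_i$, and
\begin{align*}
\int_X (v-u)\, d\mu \leq \sum_{i=1}^\infty c_i \mu(U_i \setminus K_i) + \sum_{i>N} c_i \mu(E_i) < \tfrac{\epsilon}{2} + \tfrac{\epsilon}{2} = \epsilon.
\end{align*}

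To finish, I would handle a general integrable $f$ by splitting $f = f^+ - f^-$, applying the nonnegative case with tolerance $\epsilon/2$ to each to obtain $u_1 \leq f^+ \leq v_1$ and $u_2 \leq f^- \leq v_2$, and then setting $u := u_1 - v_2$ and $v := v_1 - u_2$; note that $-v_2$ is upper semicontinuous and $-u_2$ is lower semicontinuous, so $u$ is usc and $v$ is lsc, with $u \leq f \leq v$ and $\int(v-u)\,d\mu < \epsilon$. The main technical obstacle is ensuring that $v$ is genuinely lower semicontinuous as an infinite sum — which requires the coefficients to be nonnegative so that partial sums increase to $v$, making lsc preservation automatic — and that the tail estimate together with the per-term regularity estimates assemble correctly; this is why the decomposition into a series of indicators (rather than working with a single simple approximant) is the right move.
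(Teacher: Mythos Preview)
Your argument is the standard Rudin proof, and in fact the paper does not prove this theorem at all but simply cites \cite[Theorem~2.25]{Rud86}, so your write-up agrees with the intended reference. One trivial bookkeeping slip: with your choices $\mu(U_i\setminus K_i)<\eps/(c_i 2^{i})$, so the first sum is bounded by $\eps$ rather than $\eps/2$; this is fixed by using $\eps/(c_i 2^{i+2})$ in the regularity step, and does not affect the argument.
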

	
	\subsection{Quasisymmetries and their basic properties}
	A homeomorphism $f: X \to Y$ is called \emph{$\eta$-quasisymmetric}, where $\eta:[0, \infty) \to [0, \infty)$ is a given homeomorphism, if
	\[
	\frac{d_Y(f(x),f(y))}{d_Y(f(x),f(z))} \leq \eta \left(\frac{d_X(x,y)}{d_X(x,z)} \right)
	\]
	for all $x, y, z \in X$ with $x \neq z$. The map $f$ is called \emph{quasisymmetric} if it is $\eta$-quasisymmetric for some \emph{distortion function} $\eta$.
	
	Some of the properties of quasisymmetric maps used below are summarized in the following proposition. We refer to \cite{Hei01} and \cite{HK98} for these and other  properties of quasisymmetric maps in metric spaces.
	
	\begin{Proposition}\label{diam}
		Suppose $f:X\to Y$ and $g:Y\to Z$ are $\eta$ and $\eta'$-quasisymmetric mappings, respectively.
		\begin{itemize}
			\item[(1).] The composition $g\circ f : X\to Z$ is an $\eta' \circ \eta$-quasisymmetric map.
			\item[(2).] The inverse $f^{-1}:Y\to X$ is a $\theta$-quasisymmetric map, where $\theta(t) = 1/\eta^{-1}(1/t)$.
			\item[(3).] Quasisymmetries map bounded spaces to bounded
			spaces. If $A$ and $B$ are subsets of $X$ and $A\subseteq B$, then
			\begin{align}
				\frac{1}{2\eta\left( \frac{\diam(B)}{\diam(A)} \right)} \leq \frac{\diam(f(A))}{\diam(f(B))} \leq \eta\left( \frac{2\diam(A)}{\diam(B)} \right).
			\end{align}
		\end{itemize}
	\end{Proposition}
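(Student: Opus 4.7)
For part (1), the plan is entirely mechanical: given $x,y,z\in X$ with $x\neq z$, simply compose the two defining inequalities to obtain
\[
\frac{d_Z(g(f(x)),g(f(y)))}{d_Z(g(f(x)),g(f(z)))} \le \eta'\!\left(\frac{d_Y(f(x),f(y))}{d_Y(f(x),f(z))}\right) \le \eta'\!\left(\eta\!\left(\frac{d_X(x,y)}{d_X(x,z)}\right)\right),
\]
and verify that $\eta'\circ\eta$ is still an increasing homeomorphism of $[0,\infty)$ vanishing at $0$ and tending to $\infty$.

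For part (2), the plan is to permute the roles of numerator and denominator in the defining inequality for $f$. Given $y_1,y_2,y_3\in Y$ with $y_1\neq y_3$, set $x_i=f^{-1}(y_i)$ and write the $\eta$-inequality for $f$ with the pair $(x_3,x_2)$ measured relative to $x_1$: this yields $d_Y(y_1,y_3)/d_Y(y_1,y_2) \le \eta\!\left(d_X(x_1,x_3)/d_X(x_1,x_2)\right)$. Setting $s=d_Y(y_1,y_2)/d_Y(y_1,y_3)$ and $t=d_X(x_1,x_2)/d_X(x_1,x_3)$, this rearranges to $t \le 1/\eta^{-1}(1/s)=\theta(s)$. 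A short monotonicity/limit check confirms that $\theta$ is an increasing homeomorphism, giving $\eta$-quasisymmetry of $f^{-1}$ with distortion $\theta$.

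For part (3), the plan is to prove the upper bound first and then deduce the lower bound by applying it to $f^{-1}$ via part (2). Fix $\epsilon>0$, and pick $a,a'\in A$ with $d_Y(f(a),f(a'))\ge(1-\epsilon)\diam f(A)$, and also $c,c'\in B$ with $d_X(c,c')\ge(1-\epsilon)\diam B$. The key geometric step, and the only slightly delicate one, is to use the triangle inequality on the triple $a,c,c'$ in $X$ to obtain (after relabeling) $d_X(a,c)\ge(1-\epsilon)\diam(B)/2$; this is where the factor of $2$ in $\eta(2\diam A/\diam B)$ enters. Applying the $\eta$-quasisymmetry of $f$ to the triple $(a,a',c)$ gives
\[
d_Y(f(a),f(a')) \le \eta\!\left(\frac{d_X(a,a')}{d_X(a,c)}\right) d_Y(f(a),f(c)) \le \eta\!\left(\frac{2\diam A}{(1-\epsilon)\diam B}\right)\diam f(B),
\]
and letting $\epsilon\to 0$ and invoking continuity of $\eta$ yields the claimed upper bound. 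The lower bound then follows by applying this upper bound to $f^{-1}$ (with the pair $f(A)\subseteq f(B)$, distortion $\theta$) and inverting the resulting inequality using $\theta(t)=1/\eta^{-1}(1/t)$; this is where the $1/(2\eta(\diam B/\diam A))$ appears. Finally, boundedness of $f(X)$ when $\diam X<\infty$ follows directly from the $\eta$-inequality applied with a fixed auxiliary pair $x_0,x_1\in X$: the ratio $d_X(x_0,x)/d_X(x_0,x_1)$ is controlled by $\diam X/d_X(x_0,x_1)$, so $d_Y(f(x_0),f(x))$ is bounded uniformly in $x$.

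The only step requiring any real care is the triangle-inequality argument that produces the factor $2$ in part (3); parts (1) and (2) and the boundedness statement are essentially formal consequences of the definition.
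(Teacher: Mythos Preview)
Your argument is correct in all three parts; the composition and inverse statements are handled by the standard permutation of the defining inequality, and your triangle-inequality trick for part (3) correctly produces the factor of $2$, with the lower bound then following cleanly from part (2). The paper does not actually supply a proof of this proposition---it merely states the result and refers the reader to \cite{Hei01} and \cite{HK98}---so there is nothing to compare against beyond noting that your outline is precisely the standard proof one finds in Heinonen's book.
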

	We will also repeatedly use the following well-known result.
	\begin{Lemma}\label{relativedistance}
		Let $A, B$ be subsets of $X$ and $f:X\to Y$ be a quasisymmetric mapping. Then
		\begin{align}\label{ineq:rel-dist-distortion}
			\frac{1}{2\eta(\Delta(A,B)^{-1})}\leq \D(f(A),f(B)) \leq \eta(2\D(A,B)).
		\end{align}
	\end{Lemma}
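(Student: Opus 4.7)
The plan is to establish the upper bound $\Delta(f(A),f(B)) \leq \eta(2\Delta(A,B))$ directly from the defining quasisymmetry inequality, and then to derive the lower bound by applying this upper bound to $f^{-1}$, which by Proposition \ref{diam}(2) is $\theta$-quasisymmetric with $\theta(t)=1/\eta^{-1}(1/t)$.

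For the upper bound, the key geometric observation I will use is that for any $p\in A$ there exists $a\in A$ with $d_X(p,a) \geq \diam(A)/2$: picking $x,y\in A$ with $d_X(x,y)$ arbitrarily close to $\diam(A)$, the triangle inequality forces at least one of $x,y$ to lie at distance $\geq \diam(A)/2$ from $p$. Given arbitrary $p\in A$ and $q\in B$, and picking such an $a\in A$, applying the quasisymmetry condition to the triple $(p,q,a)$ together with $d_Y(f(p),f(a))\leq \diam(f(A))$ will yield
\[
d_Y(f(p),f(q)) \leq \diam(f(A))\cdot \eta\!\left(\frac{2\,d_X(p,q)}{\diam(A)}\right),
\]
and symmetrically with the roles of $A$ and $B$ exchanged.

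Assuming without loss of generality that $\diam(A)\leq \diam(B)$, a short case analysis on whether $\diam(f(A))\leq \diam(f(B))$ or not, using monotonicity of $\eta$, shows that in either case the above can be rewritten as
\[
d_Y(f(p),f(q)) \leq \min\{\diam(f(A)),\diam(f(B))\}\cdot \eta\!\left(\frac{2\,d_X(p,q)}{\min\{\diam(A),\diam(B)\}}\right).
\]
Taking the infimum over $p,q$ and using continuity of $\eta$ then produces $\Delta(f(A),f(B))\leq \eta(2\Delta(A,B))$. The lower bound follows by applying this upper bound to $g = f^{-1}$ with arguments $f(A),f(B)$: the resulting inequality $\Delta(A,B)\leq \theta(2\Delta(f(A),f(B))) = 1/\eta^{-1}(1/(2\Delta(f(A),f(B))))$ can be rearranged by applying $\eta$ to both sides into $\Delta(f(A),f(B))\geq 1/(2\eta(\Delta(A,B)^{-1}))$. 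The only subtle point is the coupling of the minima between the domain and image sides in the upper-bound step, which is handled cleanly by the WLOG ordering and monotonicity of $\eta$; no serious obstacle is anticipated.
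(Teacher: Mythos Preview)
Your argument is correct and follows the standard route. The paper itself does not give a proof of this lemma; it simply cites \cite[Theorem~2.3]{HL23}, so your direct argument is in fact more explicit than what appears here and is presumably the same as the cited proof.

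One minor imprecision worth tightening: the claim that for every $p\in A$ there exists $a\in A$ with $d_X(p,a)\geq \diam(A)/2$ can fail when the supremum defining $\diam(A)$ is not attained (take $A=(0,1)\subset\mathbb{R}$ and $p=1/2$). Your own parenthetical already points to the fix: for any $\delta>0$ one finds $a\in A$ with $d_X(p,a)>\diam(A)/2-\delta$, carries the $\delta$ through the displayed inequality, and lets $\delta\to 0$ at the end using continuity of $\eta$. This does not affect the structure of the argument, and the case analysis on which of $\diam(f(A)),\diam(f(B))$ is smaller, together with the inversion via $\theta(t)=1/\eta^{-1}(1/t)$ for the lower bound, is handled correctly.
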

	
	The proof of Lemma \ref{relativedistance} is essentially the same as \cite[Theorem~$2.3$]{HL23}.
	
	\section{Conformal dimension and modulus}\label{CDM}
	
	Let $(X, d, \mu)$ be a metric measure space and $p \geq 1$. Given a family $\Gamma$ of locally rectifiable curves in $X$,  the \emph{$p$-modulus} of $\Gamma$ is defined as
	\[
	\mod_p (\Gamma) = \inf \int_{X} \rho^p d\mu,
	\]
	where the infimum is taken over all Borel functions $\rho:X\to[0,\infty)$ such that
	\begin{equation}\label{admissible}
		\int_\gamma \rho \ ds \geq 1, \ \forall \ \gamma \in \Gamma.
	\end{equation}
	Functions $\rho$ that satisfy inequality \eqref{admissible} are called \emph{admissible functions} for $\Gamma$. See \cite{Hei01} for a detailed discussion of modulus and its properties in the context of general metric spaces.
	
	The following theorem, essentially due to Tyson, asserts that if a metric space satisfies an upper mass bound and has a sufficiently rich family of curves, then a lower bound for its conformal dimension can be obtained, see \cite[Proposition $4.1.8$]{MT10}.
	
	\begin{Theorem}\label{modest}
		Let $(X, d, \mu)$ be a compact, doubling metric measure space satisfying the upper mass bound
		\[
		\mu(B(x,r)) \le C \cdot r^q
		\]
		for some constant $C = C(X) > 0$ and all balls $B(x,r)$ in $X$.  If  $X$ contains a curve family $\Gamma$ such that $\mod_p (\Gamma) > 0$ for some $1 < p \leq q$, then $\dim_C(X) \geq q$.
	\end{Theorem}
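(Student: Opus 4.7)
The plan is to prove the contrapositive: assume there exists an $\eta$-quasisymmetric homeomorphism $\phi:X\to Y:=\phi(X)$ with $\dim_H Y<q$, and deduce $\mod_p(\Gamma)=0$ for every curve family $\Gamma$ in $X$, contradicting the hypothesis. Following Tyson's strategy, I exploit the low Hausdorff dimension of $Y$ to build very efficient covers, pull them back via $\phi^{-1}$, and thereby construct admissible functions for $\Gamma$ whose $L^p(\mu)$-norm tends to zero.

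Fix $s$ with $\dim_H Y<s<q$, so $\mathcal{H}^s(Y)=0$. For each $\epsilon>0$ I cover $Y$ by balls $\{B(y_i,r_i)\}_{i\in I}$ with uniformly small radii and $\sum_i r_i^s<\epsilon$, then apply the basic covering Lemma \ref{cl} to extract a pairwise disjoint subfamily $\{B(y_j,r_j)\}_{j\in J}$ whose $5$-inflations still cover $Y$. Setting $x_j:=\phi^{-1}(y_j)$ and $\sigma_j:=\diam(\phi^{-1}(B(y_j,5r_j)))$, the balls $\{B(x_j,\sigma_j)\}_{j\in J}$ cover $X$, and Proposition \ref{diam} applied to $\phi^{-1}$ yields a constant $M=M(\eta)$ with $\diam(\phi^{-1}(B(y_j,r_j)))\gtrsim \sigma_j/M$. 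The candidate admissible function is
\[
\rho_\epsilon:=C_0\sum_{j\in J}\sigma_j^{-1}\chi_{B(x_j,\sigma_j)},
\]
where $C_0$ is a constant. Admissibility for $\Gamma$ will follow from a chaining argument: parametrizing any rectifiable $\gamma\in\Gamma$ by arc length and using that the $\gamma^{-1}(B(x_j,\sigma_j))$ cover $[0,\ell(\gamma)]$, one obtains $\int_\gamma\rho_\epsilon\,ds\gtrsim C_0\,\ell(\gamma)/\max_j\sigma_j$, which is at least $1$ once the $\sigma_j$ are small enough relative to $\inf_{\gamma\in\Gamma}\ell(\gamma)$.

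To upper-bound the $L^p$-norm, I apply Bojarski's inequality (Theorem \ref{Lpinequality}) to pass from the balls $B(x_j,\sigma_j)$ to their concentric shrunk copies $B(x_j,\sigma_j/M)$, each of which contains the pairwise disjoint set $\phi^{-1}(B(y_j,r_j))$. The resulting bounded overlap (inherited from the disjointness of the $\{B(y_j,r_j)\}$ in $Y$ via uniform QS control) combined with the upper mass bound $\mu(B(x,r))\le Cr^q$ yields
\[
\int_X\rho_\epsilon^p\,d\mu\ \lesssim\ \sum_{j\in J}\sigma_j^{q-p}.
\]
It remains to convert this into a small multiple of $\sum_j r_j^s<\epsilon$; here I decompose $J$ dyadically according to the scale of $r_j$, use that within each scale the QS modulus $\eta$ controls the ratio $\sigma_j/\sigma_{j'}$ uniformly, and invoke the compactness of $X$ (hence the Hölder continuity of $\phi^{-1}$) to sum across scales, with the condition $p\le q$ ensuring absolute convergence of the resulting series. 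The conclusion is that $\|\rho_\epsilon\|_{L^p(\mu)}^p\to 0$ as $\epsilon\to 0$, contradicting $\mod_p(\Gamma)>0$.

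The main obstacle is precisely this last passage from the ``$Y$-side'' bound $\sum_j r_j^s$ to the ``$X$-side'' bound $\sum_j\sigma_j^{q-p}$: quasisymmetries control only \emph{ratios} of diameters, not absolute scales, so no one-shot comparison is possible and a careful dyadic-scale argument combined with the compactness of $X$ is required. Establishing uniform bounded overlap of the inner balls $B(x_j,\sigma_j/M)$ from the disjointness in $Y$, and making the chaining argument rigorous in a general metric setting (where curves need not be convex in any ball), are secondary technical points that must be handled carefully.
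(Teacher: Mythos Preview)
There is a genuine gap at the step you yourself flag as ``the main obstacle'', and it is a matter of strategy, not of care. Your admissible function $\rho_\epsilon=C_0\sum_j\sigma_j^{-1}\chi_{B(x_j,\sigma_j)}$ carries no information from the image side, so after Bojarski and the mass bound you are left with $\sum_j\sigma_j^{\,q-p}$, a sum of $X$-scales only. To bound this by $\sum_j r_j^{\,s}$ you would need a quantitative inequality between $\sigma_j$ and $r_j$; but quasisymmetry controls only \emph{ratios} of diameters sharing a common basepoint, not absolute scales. In particular, your assertion that ``within each dyadic scale of $r_j$ the QS modulus $\eta$ controls $\sigma_j/\sigma_{j'}$ uniformly'' is false: two balls in $Y$ of the same radius but in different regions can have preimages of wildly different diameter. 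H\"older continuity of $\phi^{-1}$ gives at best $\sigma_j\lesssim r_j^{\beta}$ for some $\beta>0$ depending on $\eta$, whence $\sum_j\sigma_j^{\,q-p}\lesssim\sum_j r_j^{\,\beta(q-p)}$, and there is no reason for $\beta(q-p)\geq s$.

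The remedy, which is exactly the approach the paper takes in proving the more general Theorem~\ref{fmodest} (Theorem~\ref{modest} is the special case where each $E$ is a curve $\gamma$ and $\lambda_E$ is arc length), is to \emph{build the image diameters into the admissible function}. One takes, up to constants,
\[
\rho \;=\; \sum_i \frac{\diam(\phi(A_i))^{t}}{\diam(A_i)}\,\chi_{A_i}
\]
for a suitable $t<1$. Admissibility now comes entirely from the $Y$-side: since the sets $\phi(A_i)$ cover the connected set $\phi(\gamma)$ with small diameters, $\int_\gamma \rho\,ds\geq\sum_{A_i\cap\gamma\neq\emptyset}\diam(\phi(A_i))^{t}\geq \mathcal{H}^t_\delta(\phi(\gamma))$, and one first restricts (at no cost, by subadditivity of modulus) to the subfamily of $\gamma$ with $\mathcal{H}^t_\delta(\phi(\gamma))\geq 1$. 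On the other side, Bojarski plus the mass bound give
\[
\int_X\rho^{\alpha}\,d\mu \;\lesssim\; \sum_i \diam(\phi(A_i))^{\alpha t}\,\diam(A_i)^{\,q-\alpha},
\]
and choosing $\alpha$ with $p\leq\alpha<q$ and $\alpha t>\dim_H Y$, the exponent $q-\alpha$ is positive so the $X$-diameters are harmlessly bounded, while $\sum_i\diam(\phi(A_i))^{\alpha t}$ is controlled by the $\alpha t$-Hausdorff content of $Y$, hence by $\epsilon$. No conversion between $X$-scales and $Y$-scales is ever required; the two sides of the estimate are decoupled by the choice of $\rho$.
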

	
	In \cite{Hak09}, lower bounds for conformal dimension were obtained for spaces that have ``few'' or no curve families at all.
	The idea was to use Fuglede modulus of families of measures introduced in \cite{Fug57} rather than the classical modulus of path families. Next we recall the definition of Fuglede modulus.
	
	Let $(X, d, \mu)$ be a metric measure space and $p \geq 1$. Suppose $\mathbf{E}=\{\lambda\}_{\lambda\in \mathbf{E}}$ is a family of measures on $X$ such that the domain of each  $\lambda\in\mathbf{E}$ contains the domain of  $\mu$.  The \emph{$p$-modulus} of $\mathbf{E}$ is defined as
	\[
	\Mod_p(\mathbf{E}) = \inf \int_{X} \rho^p d\mu,
	\]
	where the infimum is taken over all Borel functions $\rho:X\to[0,\infty)$ such that
	\begin{equation}\label{admissiblem}
		\int \rho \ d \lambda \geq 1, \ \forall \ \lambda \in \mathbf{E}.
	\end{equation}
	Functions $\rho$ that satisfy inequality \eqref{admissiblem} are called \emph{admissible functions} for $\mathbf{E}$.
	
	The following result is a generalization of Theorem $5.5$ of \cite{Hak09}. The main difference is that the assumptions here are local in their nature.
	
	\begin{Theorem}\label{fmodest}
		Let $(X, d, \mu)$ be a doubling metric measure space.  Suppose there is a constant $C = C(X) > 0$ such that for every $x \in X$, there exists $r_0 = r_0(x) > 0$ such that for any $r < r_0$, we have
		\begin{equation}\label{um}
			\mu(B(x,r)) \leq C  r^q.
		\end{equation}
		
		Let $\mathcal{E}$ be a family of subsets of $X$ and $\mathbf{E} = \{\lambda_E\}_{E \in \mathcal{E}}$ be a collection of measures associated to $\mathcal{E}$ where $\lambda_E$ is supported on $E$ such that
		\begin{enumerate}
			\item $\dim_C(E) \geq  1$ for all $E \in \mathcal{E}$;
			
			\item for any $s > 1, E \in \mathcal{E}$ and $x \in E$ there exist constants $C_1 = C_1(\mathcal{E}, s)$ and $r_1 = r_1(\mathcal{E}, s, x)>0$ such that for any $r < r_1$, we have
			\begin{equation}\label{linear}
				\lambda_{E}(B(x,r) \cap E) \geq C_1r^s.
			\end{equation}
		\end{enumerate}
		
		If $\Mod_{p}(\mathbf{E}) > 0$ for some $1 \leq p < q$, then $\dim_C(X) \geq q$.
	\end{Theorem}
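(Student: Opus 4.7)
The proof proceeds by contradiction. Assume $f\colon X\to Y$ is an $\eta_0$-quasisymmetric embedding with $\dim_H f(X)<q$. Choose $q'$ satisfying $\max\{p,\,\dim_H f(X),\,1\}<q'<q$ together with an auxiliary exponent $s>1$ to be taken close to~$1$. Since $\calH^{q'}(f(X))=0$, for every $\epsilon>0$ and every sufficiently small $\delta>0$ there is a cover of $f(X)$ by balls $\{B(y_i,t_i)\}_{i\in I}$ with $t_i<\delta$ and $\sum_i t_i^{q'}<\epsilon$. Pull these balls back: put $A_i=f^{-1}(B(y_i,t_i))$, $r_i=\diam A_i$, pick $x_i\in A_i$, and set $B_i=B(x_i,r_i)$. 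Then $\{B_i\}_{i\in I}$ covers $X$, and Proposition~\ref{diam}(3) applied to $A_i\subset B_i$ yields $\diam f(B_i)\le C_0 t_i$ for a constant $C_0=C_0(\eta_0)$.

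Next, define a candidate admissible function for $\mathbf{E}$,
\[
\rho_\epsilon \;=\; K\sum_i a_i\,\chi_{2B_i},
\]
with weights $a_i$ a suitable combination of $t_i$ and $r_i$ and $K$ a normalizing constant, all to be calibrated at the end. Fix $E\in\mathcal{E}$ and apply Lemma~\ref{cl} to $\{B_i:B_i\cap E\ne\emptyset\}$ to extract a pairwise disjoint subfamily $\{B_{i_k}\}$ whose $5$-fold enlargements still cover $E$. For each $k$ choose $\hat x_k\in B_{i_k}\cap E$; since $B(\hat x_k,r_{i_k})\subset 2B_{i_k}$, the local lower bound~(\ref{linear}) gives
\[
\lambda_E(2B_{i_k}\cap E)\;\ge\; C_1\, r_{i_k}^{s}
\]
whenever $r_{i_k}<r_1(\mathcal{E},s,\hat x_k)$, which holds for all but finitely many $k$ once $\delta$ is small enough, the finitely many exceptional balls being absorbed into the constant $K$. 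Summing over the disjoint subfamily and choosing $K$ inversely proportional to $\sum_k a_{i_k}r_{i_k}^s$ (with the $a_i$ chosen in sync with the $p$-energy estimate below) yields $\int \rho_\epsilon\,d\lambda_E\ge 1$ for every $E\in\mathcal{E}$.

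For the $p$-energy, Theorem~\ref{Lpinequality} (Bojarski) lets us replace $\chi_{2B_i}$ by $\chi_{B_i}$ at the cost of a constant, while the local upper mass bound~(\ref{um}) gives $\mu(B_i)\lesssim r_i^q$ on the subcollection where $r_i<r_0(x_i)$. This leads to
\[
\int \rho_\epsilon^{p}\,d\mu\;\lesssim\;K^{p}\sum_i a_i^{p}\,r_i^{q}.
\]
Using the quasisymmetric H\"older continuity of $f^{-1}$ (a standard consequence of the doubling of $\mu$) to compare $r_i$ with $t_i$, and choosing $s>1$ close enough to $1$ so that $sp<q$ and $q'<q$ close enough to $q$ so that all exponents balance, the right-hand side reduces to a constant multiple of $\sum_i t_i^{q'}<\epsilon$. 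Sending $\epsilon\to 0$ forces $\Mod_p(\mathbf{E})=0$, contradicting the hypothesis and establishing $\dim_H f(X)\ge q$, hence $\dim_C X\ge q$. The main obstacle is the \emph{local} character of~(\ref{um}) and~(\ref{linear}): the thresholds $r_0(x)$ and $r_1(\mathcal{E},s,x)$ depend on the point, so one must split the cover into a ``good'' part where both estimates apply uniformly and argue that the ``bad'' remainder contributes negligibly to both the admissibility condition and the $p$-energy. Simultaneously calibrating $s$, $q'$, $p$ against the quasisymmetric H\"older exponent of $f^{-1}$ is the delicate bookkeeping that distinguishes the present argument from the global version established in~\cite{Hak09}.
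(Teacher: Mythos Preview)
Your proposal has a genuine gap: you never use the hypothesis $\dim_C(E)\ge 1$, and without it the admissibility step cannot be made to work uniformly over $\mathcal{E}$. Concretely, your disjoint subfamily $\{B_{i_k}\}$ and the quantity $\sum_k a_{i_k}r_{i_k}^{s}$ both depend on the particular $E$, so choosing the normalizing constant $K$ ``inversely proportional'' to this sum produces an $E$-dependent $K$; but $\rho_\epsilon$ must be a single function admissible for \emph{every} $\lambda_E$ simultaneously. Likewise, the ``finitely many exceptional balls'' where $r_{i_k}\ge r_1(\mathcal{E},s,\hat x_k)$ vary with $E$ and their number is not uniformly bounded, so they cannot be absorbed into a fixed $K$. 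A further problem is your appeal to ``quasisymmetric H\"older continuity of $f^{-1}$'' to compare $r_i$ with $t_i$: the H\"older exponent here is some uncontrolled $\beta>0$, which does not allow you to balance $r_i^{q}$ against $t_i^{q'}$ in the energy estimate.

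The paper's proof resolves the admissibility issue precisely through the hypothesis you omitted. Since $\dim_C(E)\ge 1$, one has $\dim_H f(E)\ge 1$, so for any $t<1$ the quantity $\mathcal{H}^t_{1/k}(f(E))\to\infty$. Stratifying $\mathcal{E}$ by the threshold $k_E$ at which $\mathcal{H}^t_{1/k_E}(f(E))\ge 1$ first holds, and working on each stratum $\mathcal{E}_j$, one takes the weights to be $\diam(f(A_i))^{t}/\diam(A_i)^{s}$: integrating against $\lambda_E$ and using~(\ref{linear}) reduces admissibility to $\sum_{A'_i\cap f(E)\neq\emptyset}\diam(A'_i)^{t}\ge 1$, which is exactly the statement $\mathcal{H}^t_{1/j}(f(E))\ge 1$ and is uniform over $E\in\mathcal{E}_j$. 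This choice of weights also decouples $r_i$ from $t_i$ in the energy estimate (no H\"older comparison is needed): after Bojarski and the mass bound one obtains $\sum_i \diam(A'_i)^{\alpha t}\diam(A_i)^{q-\alpha s}$, and choosing $s>1$ with $\alpha s<q$ bounds this by $\sum_i \diam(A'_i)^{\alpha t}<\epsilon$. The local nature of~(\ref{um}) and~(\ref{linear}) is handled not by discarding a ``bad'' part but by refining each ball $B'$ of the initial cover of $f(X)$ along the exhaustion $B'_n=\{x\in B': r_0(f^{-1}(x))\ge 5/n,\ r_1(f^{-1}(x))\ge 5/n\}$, so that on the refined cover both estimates hold with the stated uniform constants.
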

	
	\begin{proof}[Proof of Theorem \ref{fmodest}]
		
		Assume that $f$ is an $\eta$-quasisymmetry on $X$ such that $\dim_H f(X) < q$. Let $\alpha < q$ be a  number chosen so that $\alpha > \max(p, \dim_H f(X))$. Choose $t < 1$ so that $\alpha > \frac{1}{t} \dim_H f(X)$.
		
		For any $E \in \mathcal{E}$, we have  $\mathcal{H}^t_{1/k}(f(E)) \to  \infty$ as $k \to \infty$ since $\dim_H f(E) \geq 1$. Let $k_E \in \mathbb{N}$ denote the smallest integer such that $\mathcal{H}^t_{1/k_E}(f(E)) \geq 1$.
		
		Let $\mathcal{E}_i = \left\{E \in \mathcal{E}: k_E  \leq  i \right\}$ and $\mathbf{E}_i= \left\{ \lambda_E \in \mathbf{E}: E \in \mathcal{E}_i \right\}$. Then $\mathbf{E}  = \bigcup_{j=1}^\infty \mathbf{E}_j$ and, by subadditivity of modulus,  see  \cite[Theorem~$1$]{Fug57}, we have
		\[
		\Mod_\alpha \left( \mathbf{E} \right) \leq \sum_{j=1}^{\infty}\Mod_\alpha \left( \mathbf{E}_j \right).
		\]
		
		Note that  by H\"older's inequality if $\Mod_\alpha \left( \mathbf{E}_j \right) = 0$ then $\Mod_p \left( \mathbf{E}_j \right) = 0$,  since $\alpha>p$. Therefore,  to prove the theorem it is enough to show that $\Mod_\alpha \left( \mathbf{E}_j \right) = 0$ for every $j\in\mathbb{N}$.
		
		In the rest of the proof we let $X_1 = \bigcup_{E \in \mathcal{E}} E$. Since $\dim_H(f(X_1)) < \alpha t$, for any $\eps, \delta > 0$ there exists a collection of balls $\mathcal{B}' = \{B'_i\}$ covering $f(X_1)$ such that each $B'_i$ is centered in $f(X_1)$, $\diam(B'_i) < \delta$, and
		\[
		\sum_i \diam(B'_i)^{\alpha t} < \eps.
		\]
		
		Let $\eps, \delta, j$ be fixed and $\delta < \frac{1}{Hj}$ for $H = \eta(20)$. We choose some $s > 1$ with $q > \alpha s$.  Next,  using $\mathcal{B}'$ we will construct a cover $ \mathcal{F}'$ of $f(X_1)$,  such that for every $F' \in \mathcal{F'}$ and a \emph{circumscribing ball} $F$ of $f^{-1}(F')$ (i.e., $F$ is a compact metric ball of the smallest radius which contains $f^{-1}(F')$), the following conditions are satisfied:
		\begin{enumerate}
			\item $\mu(F) \leq C\diam(F)^q$ for some constant $C = C(X) > 0$;\label{sum}
			
			\item  $\lambda_{E}(5F \cap E) \geq C_1\diam(5F)^s$ for any $E \in \mathcal{E}$ and $C_1 = C_1(s)$;\label{slinear}
			
			\item $\sum_{F' \in \mathcal{F}'} \diam(F')^{\alpha t}  < \eps$;
		\end{enumerate}
		where the constants $C, C_1$ are the same as those in (\ref{um}) and (\ref{linear}).
		
		To construct $\mathcal{F}'$ we start by picking any $B'\in\mathcal{B}'$.
		If the circumscribing ball of $f^{-1}(B')$ satisfies conditions \eqref{sum} and \eqref{slinear},  we include $B'$ in $\mathcal{F}'$.  Otherwise,  we will define a cover $\mathcal{F}'_{B'}$ of $B'$ as follows.  Let
		\[
		B_n' = \left\{x \in B': r_0\left(f^{-1}(x)\right) \geq \frac{5}{n}  \ \textrm{and} \ r_1\left(f^{-1}(x)\right) \geq \frac{5}{n}\right\},
		\]
		where $r_0, r_1$ are as in the statement of the theorem.  Since $f^{-1}$ is uniformly continuous, there exists $\sigma_n>0$ such that
		\[
		\diam\left( f^{-1}\left( B\left(x, \sigma_n\right) \right) \right) < \frac{5}{n}
		\]
		for any ball $B\left(x, \sigma_n\right) \subseteq B'$ with radius $\sigma_n$.  Since $\dim_H(B_n') \cap f(X_1)) < \alpha t$ there exists a sequence of balls $\left\{F'_{n,i}\right\}$ centered at points of $B_n'$ with diameters smaller than $\sigma_n$ such that $\left\{F'_{n,i}\right\}$ covers $B_n'$ and $\sum_{i} \diam(F'_{n,i})^{\alpha t} < {\diam(B')^{\alpha t}}/{2^n}$.
		
		It then follows from inequalities \eqref{um} and \eqref{linear} that for any $F_{n,i}'$, every circumscribing ball of $f^{-1}\left(F'_{n,i}\right)$ satisfies conditions \eqref{sum} and  \eqref{slinear}.
		
		Collecting all such sets $\{F'_{n,i}\}$ for each $n$, we obtain a cover $\mathcal{F}'_{B'}$ of $B'$ such that for any $F' \in \mathcal{F}_{B'}$, every circumscribing ball of $f^{-1}(F')$ satisfies conditions \eqref{sum}, \eqref{slinear} and
		\[
		\sum_{F' \in \mathcal{F}_{B'}} \diam(F')^{\alpha t} < \diam(B')^{\alpha t}.
		\]
		
		Applying the procedure above to every $B' \in \mathcal{B}'$ for which either \eqref{sum} or \eqref{slinear} fails produces the desired cover $\mathcal{F}'$ of $f(X_1)$.
		
		We denote by $\mathcal{F}$ the collection of all circumscribing balls of $f^{-1}(F')$ when $F' \in \mathcal{F}'$. By Lemma \ref{cl} there is a disjoint subcollection $\mathcal{F}_1$of $\mathcal{F}$ such that
		\begin{equation}\label{disjoint}
			\bigcup_{F \in \mathcal{F}} F \subset \bigcup_{F \in \mathcal{F}_1} 5F.
		\end{equation}
		
		We denote 
		\[
		\mathcal{A} = \left\{ 5F: F \in \mathcal{F}_1  \right\} \ \textnormal{and} \ \mathcal{A} '= \left\{ f(5F): F \in \mathcal{F}_1  \right\}.
		\]
		To simplify the notation, for any $A \in \mathcal{A}$ we let $A' = f(A)$.	Since  $F$ is a circumscribing balls of $f^{-1}(F')$ we have that $\diam\left(f^{-1}(F') \right) \leq \diam(F) \leq 2\diam\left(f^{-1}(F') \right)$ for any $F \in \mathcal{F}$. It follows from Proposition \ref{diam} that
		\[
		\frac{\diam(A')}{\diam(F')} \leq \eta \left(2 \frac{\diam(5F)}{\diam\left(f^{-1}(F')\right)} \right) \leq \eta \left(4 \frac{\diam(5F)}{\diam\left(F\right)} \right) \leq \eta(20).
		\]
		Then
		\[
		\diam(A') \leq H \diam(F') < H \delta < 1/j
		\]
		and
		\[
		\sum_{A' \in \mathcal{A}'} \diam(A')^{\alpha t} \leq \sum_{F' \in \mathcal{F}'} (H \diam(F'))^{\alpha t} \leq H^{\alpha t}\eps.
		\]
		
		Next we construct an admissible function $\rho_j$ for $\mathbf{E}_j$ and show that $\int_X \rho_j^\alpha d \mu \lesssim \eps$. Define
		\[
		\rho_j(x) = \sum_i \frac{\diam(A'_i)^t}{\diam(A_i)^s}\frac{\chi_{A_i}(x)}{C_1}.
		\]
		Then for any $E \in \mathcal{E}_j$,
		\begin{align*}
			\int_E \rho_j d \lambda_{E} & = \int_E \sum_i \frac{\diam(A'_i)^t}{\diam(A_i)^s}\frac{\chi_{A_i}(x)}{C_1} d \lambda_{E}
			\\
			& \geq \int_E \sum_{A_i \cap E \neq \emptyset} \frac{\diam(A'_i)^t}{\diam(A_i)^s}\frac{\chi_{A_i}(x)}{C_1} d \lambda_{E}
			\\
			& = \frac{1}{C_1} \sum_{A_i \cap E \neq \emptyset}  \frac{\diam(A'_i)^t}{\diam(A_i)^s} \int_{E \cap A_i}d \lambda_{E}
			\\
			& = \frac{1}{C_1} \sum_{A_i \cap E \neq \emptyset}  \frac{\diam(A'_i)^t}{\diam(A_i)^s} \lambda_{E}\left( E \cap A_i \right)
			\\
			& \geq  \sum_{A'_i \cap f(E) \neq \emptyset}  \diam\left(A'_i\right)^t \geq 1. \tag{by \eqref{linear}}
		\end{align*}
		
		The last inequality comes from the fact that $\diam\left(A'_i\right) < \frac{1}{j}$ for any $A'_i \in \mathcal{A}'$ and $\mathcal{H}^t_{1/j}\left(f(E)\right) \geq 1$. Finally, we will show that $\int_X \rho_j^\alpha d \mu \lesssim \eps$. Recalling that $s > 1$ and $q > \alpha s$ we have the following estimates:
		\begin{align*}
			\int_X \rho_j^\alpha d \mu & = \int_X \left( \sum_i \frac{\diam(A'_i)^t}{\diam(A_i)^s}\frac{\chi_{A_i}(x)}{C_1} \right)^\alpha d \mu
			\\
			& \leq A(\alpha, \mu, C_1) \int_X \left( \sum_i \frac{\diam(A'_i)^t}{\diam(A_i)^s}\chi_{\frac{1}{5}A_i}(x) \right)^\alpha d \mu
			\tag{Theorem \ref{Lpinequality}}
			\\
			& = A(\alpha, \mu, C_1) \sum_i \left( \frac{\diam(A'_i)^t}{\diam(A_i)^s} \right)^\alpha \mu\left( \frac{1}{5}A_i \right) \tag{by (\ref{disjoint})}
			\\
			& \leq A_1(\alpha, \mu, q, C, C_1) \sum_i \diam(A'_i)^{\alpha t} \diam(A_i)^{q-\alpha s} \tag{by (\ref{um})}
			\\
			& \leq A_1(\alpha, \mu, q, C, C_1) \sum_i \diam(A'_i)^{\alpha t}
			\\
			& \leq A_2(\alpha, t, \mu, H,C, C_1) \eps.
		\end{align*}
		
		This implies that $\Mod_\alpha \left( \mathbf{E}_j \right) = 0$ for every $\mathbf{E}_j$ and finishes the proof.
	\end{proof}
	
	\begin{Remark}
		Using the techniques from \cite{BHW16} the assumptions that $X$ is doubling and bounded in Theorem \ref{fmodest} could be dropped, and one could just require that $X$ is a separable metric space instead.   However,  in our applications the extra assumptions are satisfied and we do not need this more general version of the theorem.
	\end{Remark}
	\subsection{Conformal dimension of hierarchical spaces}\label{section:cantor}
	
	Let $E$ be a metric space such that $E = \bigcap_{i=1}^\infty \bigcup_{j=1}^{2^i} E_{i,j}$, where $E_{i,j} \cap E_{i,k} = \emptyset$ for any $j \neq k$. 
	We denote 
	\[
	\mathcal{E}_i = \left\{E_{i,j}  \right\}_{j=1}^{2^i}
	\]
	and refer to this family as the collection of the \emph{$i^{\mathrm{th}}$-generation} elements. Moreover, we let  $\mathcal{E} = \bigcup_{i=1}^\infty \mathcal{E}_i$. We will  assume that  $\mathcal{E}$ has the  { property}  that  for every sequence of non-empty sets $E_1,E_2,\ldots$ in $\mathcal{E}$  with $E_1\supset E_2 \supset \ldots$ we have $\bigcap_{i=1}^{\infty} E_{i}  \neq \emptyset$.
	
	%
	%
	
	We say that a bounded metric space $E$ is equipped with a \emph{hierarchical structure} if each  $E_{i,j}\in\mathcal{E}_i$:
	\begin{itemize}
		\item  contains exactly two  ``children'' $E_{i+1, 2j-1}, E_{i+1, 2j} \in \mathcal{E}_{i+1}$;
		
		\item  is contained in exactly one ``parent'' $\tilde{E}_{i,j} \in \mathcal{E}_{i-1}$;
		
		\item has exactly one ``sibling''  $E'_{i,j} \in \mathcal{E}_i$, such that $E'_{i,j}\subset \tilde{E}_{i,j}$.
	\end{itemize}
	
	We also let $E_{0,1} = E_{1,1} \cup E_{1,2}$.
	
	For any $x \in E$, we denote by $E_i(x)$ the unique $i^{\mathrm{th}}$-generation element that contains $x$.
	
	We say that a metric space $E$ with hierarchical structure is \emph{flat} if for any $x \in E$ and any $r > 0$, there exists $A(x) > 0$ such that
	\[
	\Card\left( \left\{ E_{i,j} :  E_{i,j} \subseteq B(x,r) \ \mathrm{and} \ \tilde{E}_{i,j} \nsubseteq B(x,r) \right\}  \right) \leq A(x).
	\]
	The reason to focus on flatness is that we need some restrictions on the local geometry of $E$. Intuitively, a flat Cantor space does not ``oscillate'' too much in any small neighborhood of any point.
	
	\begin{Theorem}\label{cd1}
		Let $E = \bigcap_{i=1}^\infty \bigcup_{j=1}^{2^i} E_{i,j}$ be a flat metric space with hierarchical structure and $E_{i,j} \subseteq E$ for every $E_{i,j} \in \mathcal{E}$. If for any $x \in E$ we have
		\begin{equation}\label{rdist}
			\lim_{i \to \infty}\Delta(E_{i}(x), E'_{i}(x)) \to 0,
		\end{equation}
		and there exists $L = L(x)$ such that for all $i \in \mathbb{N}$,
		\begin{equation}\label{cdiam}
			\frac{1}{L} \leq \frac{\diam(E_{i}(x))}{\diam(E'_{i}(x))} \leq L.
		\end{equation}
		Then $\dim_C(E) \geq 1$.
	\end{Theorem}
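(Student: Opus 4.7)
The plan is to show that for every $\eta$-quasisymmetric embedding $f:E\to Y$ the image $E^\ast:=f(E)$ has $\dim_H E^\ast\geq 1$; by the definition of conformal dimension this yields $\dim_C(E)\geq 1$. Setting $E^\ast_{i,j}:=f(E_{i,j})$, I would first verify that all three hypotheses pass to $E^\ast$. Condition \eqref{rdist} follows from Lemma \ref{relativedistance} since $\Delta(E^\ast_i(y),(E^\ast)'_i(y))\leq\eta(2\Delta(E_i(x),E'_i(x)))\to 0$ for $y=f(x)$. Condition \eqref{cdiam} follows from Proposition \ref{diam}(3) applied to the inclusions of the two siblings into their common parent, producing a new constant $L'(y)$. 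Flatness transfers with a new constant $A'(y)$ by sandwiching $f^{-1}(B_Y(y,r'))$ between two concentric balls in $E$ whose radii have bounded ratio (using quasisymmetry of $f^{-1}$), applying flatness at $x$ to the outer ball, and controlling the extra maximal pieces coming from the annular region by a finite number of tree generations.

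Next, for each $s\in(0,1)$ I would build a Borel probability measure $\mu_s$ on $E^\ast$ by the recursive rule
\[
\mu_s(E^\ast_{i+1,2j-1}):=\mu_s(E^\ast_{i,j})\cdot\frac{\diam(E^\ast_{i+1,2j-1})^s}{\diam(E^\ast_{i+1,2j-1})^s+\diam(E^\ast_{i+1,2j})^s},\qquad \mu_s(E^\ast)=1.
\]
By \eqref{cdiam} the splitting ratios are bounded away from $0$ and $1$, so $\mu_s$ is a nonzero Borel probability measure. The key estimate is that the ratio $R_{i,j}:=\mu_s(E^\ast_{i,j})/\diam(E^\ast_{i,j})^s$ is uniformly bounded along every descending chain: a direct calculation gives
\[
\frac{R_{i+1,\cdot}}{R_{i,\cdot}}=\frac{\diam(E^\ast_{i,j})^s}{\diam(E^\ast_{i+1,2j-1})^s+\diam(E^\ast_{i+1,2j})^s},
\]
and combining the triangle inequality with \eqref{rdist} and \eqref{cdiam} on $E^\ast$, together with the elementary inequality $(1+L)^s<1+L^s$ valid for every $s\in(0,1)$ and $L>0$ (which follows by comparing derivatives), forces this ratio to be strictly less than one once $i$ is large enough. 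Hence $R_{i,j}$ grows by at most a finite factor over an initial stretch and then decreases, so it is bounded by some $C(s,y)$ along every chain through $y$.

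To conclude, define $F_N:=\{y\in E^\ast:\sup_i R_{i,j(y)}\leq N\}$; these sets are nested and their union has full $\mu_s$-measure, so $\mu_s(F_{N_0})>0$ for some $N_0$. By flatness of $E^\ast$, any ball $B_Y(y,r)$ with $r$ small meets $E^\ast$ in at most $A'(y)$ maximal pieces $E^\ast_{i_k,j_k}$ of diameter at most $2r$, giving
\[
\mu_s(B_Y(y,r))\leq\sum_k\mu_s(E^\ast_{i_k,j_k})\leq A'(y)\cdot N_0\cdot (2r)^s\lesssim r^s
\]
for $y\in F_{N_0}$. The Mass Distribution Principle (Theorem \ref{mdp}) applied to $\mu_s|_{F_{N_0}}$ then yields $\dim_H E^\ast\geq s$, and letting $s\nearrow 1$ finishes the proof. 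I expect the main obstacle to be the quasisymmetric transfer of flatness in the first step, since pullbacks of balls are not balls and the ``maximal inside a ball'' condition is sensitive to the exact shape of the ball; this will require the sandwiching argument and a careful bookkeeping of how many tree generations lie in the annular region between the two concentric balls.
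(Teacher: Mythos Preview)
Your approach is essentially identical to the paper's: the same recursive measure $\mu_s$ (the paper's Lemma~\ref{mla}), the same product/ratio computation showing $R_{i,j}$ eventually decreases, the same stratification by pointwise constants followed by the Mass Distribution Principle. Two points deserve comment.

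First, your stratification $F_N=\{y:\sup_i R_{i,j(y)}\leq N\}$ is too coarse for the final estimate as written. The bound $\mu_s(B_Y(y,r))\leq A'(y)\cdot N_0\cdot(2r)^s$ involves the (transferred) flatness constant $A'(y)$, which is \emph{not} controlled on $F_{N_0}$; likewise your proposed flatness transfer uses the diameter-ratio constant $L'(y)$ and implicitly $\delta(y)=\sup_i\Delta(E^\ast_i(y),(E^\ast)'_i(y))$. The paper handles this by stratifying simultaneously over all four pointwise constants $C(x),A(x),\delta(x),L(x)\leq N$, so that on the stratum the implied constant in the ball estimate is genuinely uniform and the Mass Distribution Principle applies. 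You should do the same. Relatedly, when bounding $\mu_s$ on a ball you only control $R_{i,j}$ for pieces through the center $y$; for the other maximal pieces you need them to meet $F_{N_0}$, which is exactly why the paper restricts the measure to the stratum and imposes the extra condition $I_i\cap I^\ast(N)\neq\emptyset$ in its collection $\mathcal I(x,r,N)$.

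Second, you are right that the quasisymmetric transfer of flatness is the delicate step; your sandwiching-plus-generation-count sketch is the right idea, but note that bounding the number of generations between the inner and outer ball requires a uniform upper bound on $\diam(E^\ast_{i+1,\cdot})/\diam(E^\ast_{i,\cdot})<1$, which in turn needs $L'(y)$ and $\delta(y)$ bounded---another reason these must enter the stratification. The paper sidesteps an explicit transfer by counting maximal pieces directly in the image, but it too relies on all four constants being bounded on the stratum.
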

	
	Theorem \ref{cd1} is a generalization of Theorem $3.2$ in \cite{Hak09} and the proof follows the same reasoning. First, we need the following lemma.
	
	\begin{Lemma}\label{mla}
		Let $E$ be a metric space with hierarchical structure satisfying condition \eqref{rdist} and $E_{i,j} \subseteq E$ for every $E_{i,j} \in \mathcal{E}$. Then for any $\eta$-quasisymmetry $f: E \to I$ and any $0 < \alpha < 1$ there exists a probability measure $\mu$ on $I$ such that for any $x \in E$,
		\begin{equation}\label{msubset}
			\mu(f(E_i(x))) \leq C \diam(f(E_i(x)))^{\alpha},
		\end{equation}
		for some constant $0 < C < \infty$ depending on $x,\eta$ and $\alpha$.
	\end{Lemma}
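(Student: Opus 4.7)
My plan is to construct the probability measure $\mu$ on $I$ by first building a probability measure $\tilde\mu$ on the binary tree of $E$ with splittings weighted by the $\alpha$-th powers of image diameters, and then pushing $\tilde\mu$ forward under $f$. Concretely, I would set $\tilde\mu(E_{0,1}) = 1$ and, for each $E_{i,j}$ with children $A, B \in \mathcal{E}_{i+1}$, declare
\[
\tilde\mu(A) = \tilde\mu(E_{i,j}) \cdot \frac{\diam(f(A))^{\alpha}}{\diam(f(A))^{\alpha} + \diam(f(B))^{\alpha}}, \qquad \tilde\mu(B) = \tilde\mu(E_{i,j}) - \tilde\mu(A).
\]
Since each $E_{i,j}$ is a subset of $E$ by hypothesis, a standard Kolmogorov/Carathéodory extension (or weak-$\ast$ limit of discrete measures supported on finer and finer generations) produces a Borel probability measure $\tilde\mu$ on $E$; the measure $\mu := f_\ast \tilde\mu$ then satisfies $\mu(f(E_{i,j})) = \tilde\mu(E_{i,j})$.

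Second, writing $a_k = \diam(f(E_k(x)))$ and $b_k = \diam(f(E'_k(x)))$, the recursive definition telescopes to
\[
\frac{\mu(f(E_i(x)))}{a_i^\alpha} = \frac{1}{a_0^\alpha}\prod_{k=1}^{i} \frac{a_{k-1}^\alpha}{a_k^\alpha + b_k^\alpha},
\]
so the lemma reduces to showing the right-hand side is bounded in $i$ by a constant depending only on $x$, $\eta$, and $\alpha$.

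Third, I would bound each factor geometrically. Using that $E_{k-1}(x) = E_k(x) \cup E'_k(x)$ (the parent is the disjoint union of its children) together with Lemma \ref{relativedistance},
\[
a_{k-1} \leq a_k + b_k + \dist(f(E_k(x)), f(E'_k(x))) \leq (1 + \eta(2\Delta_k))\,(a_k + b_k),
\]
where $\Delta_k = \Delta(E_k(x), E'_k(x)) \to 0$. The subadditivity $(a+b)^\alpha \leq a^\alpha + b^\alpha$, valid for $0 < \alpha < 1$, then gives
\[
\frac{a_{k-1}^\alpha}{a_k^\alpha + b_k^\alpha} \leq (1+\eta(2\Delta_k))^\alpha.
\]

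The hard part will be bounding the infinite product $\prod_k (1+\eta(2\Delta_k))^\alpha$: a naive telescoping needs summability of $\eta(2\Delta_k)$, which is strictly stronger than the hypothesis $\Delta_k \to 0$. To close this gap I would sharpen the previous inequality by exploiting the strict subadditivity $(a+b)^\alpha < a^\alpha + b^\alpha$, which produces a genuine multiplicative gain (a factor strictly below $1$) at each level where the two siblings have comparable image diameters; at levels where one sibling dominates the other the ratio $a_{k-1}^\alpha/(a_k^\alpha + b_k^\alpha)$ is automatically close to $1$ and the only loss comes from the small factor $\eta(2\Delta_k)$. A careful two-regime bookkeeping of gains against losses, combined with absorbing a finite initial segment of indices into the constant $C = C(x,\eta,\alpha)$ (using $\eta(2\Delta_k) \to 0$), should yield the required uniform bound $\mu(f(E_i(x))) \leq C \diam(f(E_i(x)))^\alpha$ and complete the proof.
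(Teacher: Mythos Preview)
Your measure construction and telescoping identity match the paper's proof exactly, so the only issue is the bound on the factors $\dfrac{a_{k-1}^\alpha}{a_k^\alpha+b_k^\alpha}$. The gap is real but easy to close, and the fix is much simpler than the ``two-regime bookkeeping'' you sketch.

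The loss occurs precisely where you pass from $\dist(f(E_k),f(E'_k))\le \eta(2\Delta_k)\,\min(a_k,b_k)$ (which is what Lemma~\ref{relativedistance} actually gives) to the cruder $\dist\le \eta(2\Delta_k)(a_k+b_k)$. Do not throw away the $\min$. Writing, say, $a_k\ge b_k$ and $\epsilon_k:=\eta(2\Delta_k)\to 0$, the triangle inequality gives
\[
a_{k-1}\le a_k+(1+\epsilon_k)\,b_k,
\]
and now the elementary inequality $(1+(1+\epsilon)t)^\alpha\le 1+t^\alpha$ for $t\in[0,1]$, valid as soon as $\epsilon$ is small enough depending only on $\alpha$ (check it at the endpoints and use concavity in $t$), yields directly
\[
\frac{a_{k-1}^\alpha}{a_k^\alpha+b_k^\alpha}\le 1\qquad\text{for all sufficiently large }k.
\]
Hence the infinite product is bounded by the finite product over the initial indices, which depends only on $x$, $\eta$, $\alpha$. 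This is exactly what the paper does (phrased there via $p=a_k/a_{k-1}\in(1/3,1]$ and the inequality $p^\alpha+(1-p)^\alpha(1-\epsilon)^\alpha\ge 1$), and it makes your proposed gain/loss accounting unnecessary.
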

	
	\begin{proof}
		Since $f$ is a homeomorphism and $E$ has a hierarchical structure and $E_{i,j} \subseteq E$, $f(E)$ has a hierarchical structure. The notation used for $E$ will also be used for $f(E)$. In particular, for any $I_{i,j} \subseteq I$ of the form $I_{i,j} = f(E_{ij})$, we denote by $\tilde{I}_{i,j} = f(\tilde{E}_{ij})$ and $I'_{i,j} = f(E'_{ij})$ the parent and the sibling of $I_{i,j}$, respectively. The image of any $n^{\mathrm{th}}$-generation element of $E$ under $f$ is a $n^{\mathrm{th}}$-generation element of $I$. Moreover, we denote $I_i(x) = f(E_i(x))$, $\mathcal{I}_i=\{I_{i,j}\}_{j=1}^{2^i}$, and $\mathcal{I}=\bigcup_{i=1}^{\infty}\mathcal{I}_i$. Without loss of generality, we assume $\diam(I) = 1$.
		
		We define $\mu$ as follows. Let $\mu(I)= 1$. Then for any $I_{i,j} = f(E_{i,j})$, we define
		\begin{equation}\label{measure}
			\mu(I_{i,j}) = \frac{\diam (I_{i,j})^{\alpha}}{\diam (I_{i,j})^{\alpha} + \diam (I'_{i,j})^{\alpha}} \mu(\tilde{I}_{i,j}).
		\end{equation}
		
		Since each nested sequence of $\mathcal{E}$ has nonempty intersection, $\mu$ defines a Borel measure on $I$ by \cite[Theorem~$1.2$]{Mal75}.
		
		For any $n^{\mathrm{th}}$-generation element $I_n \subseteq I$, there exists a unique sequence of nested subsets
		\[
		I_n \subseteq I_{n-1} \subseteq I_{n-2} \subseteq \ldots \subseteq I_1 \subseteq I_0 = I
		\]
		containing it so that $I_i = \tilde{I}_{i+1}$. Therefore, we have
		\begin{align*}
			\frac{\mu(I_n)}{\diam(I_n)^{\alpha}} & = \frac{1}{\diam(I_n)^{\alpha} +\diam(I'_n)^{\alpha}} \cdots \frac{\diam(I_{1})^{\alpha}}{\diam(I_{1})^{\alpha} +\diam(I'_{1})^{\alpha}}\mu(I)
			\\
			& = \frac{\diam(I_{n-1})^{\alpha}}{\diam(I_n)^{\alpha} +\diam(I'_n)^{\alpha}} \cdots \frac{\mu(I)}{\diam(I_{1})^{\alpha} +\diam(I'_{1})^{\alpha}}
			\\
			& = \prod_{i=1}^{n} \frac{\diam(\tilde{I}_i)^{\alpha}}{\diam(I_i)^{\alpha} +\diam(I'_i)^{\alpha}}.
		\end{align*}
		
		Therefore, to prove inequality \eqref{msubset}, it is sufficient to show that for a fixed $f(x) \in \bigcap_{i=1}^\infty I_i$,
		\[
		\prod_{i=1}^{\infty} \frac{\diam(\tilde{I}_i)^{\alpha}}{\diam(I_i)^{\alpha} +\diam(I'_i)^{\alpha}}  < \infty.
		\]
		
		We claim that for $i$ sufficiently large,
		\begin{equation}\label{gdc}
			\frac{\diam(\tilde{I}_i)^{\alpha}}{\diam(I_i)^{\alpha} +\diam(I'_i)^{\alpha}} \leq 1.
		\end{equation}
		
		Let $I_i = f(E_i)$, $I'_i = f(E'_i)$ and $\tilde{I}_i = f(\tilde{E}_i)$. Since $\Delta(E_i, E'_i) \to 0$ as $i \to \infty$, we have $\Delta(I_i, I'_i) \to 0$ by Lemma \ref{relativedistance} and the definition of relative distance. Without loss of generality, we assume that $\diam(I_i) \geq \diam(I'_i)$. Since $\Delta(I_i, I'_i) \to 0$, we may assume that for sufficiently large $i$,
		\[
		\diam(I_i) \geq \frac{1}{3}\diam(\tilde{I}_i)
		\]
		and
		\[
		\diam(I_i) + \diam(I'_i)/(1-\eps) > \diam(\tilde{I}_i)
		\]
		for some $\eps > 0$.
		
		Let $p = \frac{\diam(I_i)}{\diam(\tilde{I}_i)}$, then $\frac{\diam(I'_i)}{\diam(\tilde{I}_i)} > (1-p)(1-\eps) $. It is then sufficient to prove that
		\begin{equation}\label{mde}
			p^{\alpha} + (1-p)^{\alpha}(1-\eps)^{\alpha} \geq 1
		\end{equation}
		for $p \in \left(\frac{1}{3}, 1\right)$.
		
		It is clear that the left hand side of inequality \eqref{mde} attains its minimum at either $p=\frac{1}{3}$ or $1$. When $i$ is sufficiently large, we may let $\eps$ be small enough such that
		\[
		\left( \frac{1}{3} \right)^{\alpha} + \left( \frac{2}{3} \right)^{\alpha}(1-\eps)^{\alpha} \geq 1.
		\]
		Then $p^{\alpha} + (1-p)^{\alpha}(1-\eps)^{\alpha} \geq 1$ for $p=\frac{1}{3}$ or $1$. This finishes the proof.
	\end{proof}
	
	We are now ready to prove Theorem \ref{cd1}
	
	\begin{proof}[Proof of Theorem \ref{cd1}]
		Let $f: E \to I$ be an $\eta$-quasisymmetry and $\alpha < 1$. Fix $\mu$ from Lemma \ref{mla}. Define
		\[
		E^*(N) = \left\{x \in E: C(x, \eta, \alpha) \leq N, A(x) \leq N, \delta(x) \leq N, L(x) \leq N \right\}
		\]
		where $A(x)$ is the flatness constant of $x$ in $E$, $C(x,\eta, \alpha)$ is the constant  in \eqref{msubset}, $L(x)$ is the constant  in \eqref{cdiam}, and $\delta(x) = \max_{i}\Delta(E_{i}(x), E'_{i}(x))$. Since $\Delta(E_i(x), E'_i(x)) \to 0$ for any $x \in E$, we have $\delta(x) < \infty$ for any $x \in E$.
		
		Let
		\[
		E(N) = \bigcap_{n=1}^\infty \bigcup_{x \in E^*(N)} B\left(x,\frac{1}{n}\right).
		\]
		Then $E(N)$ is a Borel measurable set containing $E^*(N)$.
		
		To simplify notation, we write $I^*(N) = f(E^*(N))$ and $I(N) = f(E(N))$. Notice that $I = \bigcup_{N=1}^\infty I(N)$ since $E = \bigcup_{N=1}^\infty E(N)$.
		
		We now prove that for any $x \in E(N)$, there exists a $G = G(\eta, N)$ such that 	for any $r > 0$,
		\begin{equation}\label{umb}
			\mu\left(B(f(x),r) \cap I(N)\right) \leq  Gr^{\alpha}.
		\end{equation}
		
		Given $x \in E(N)$ and $r > 0$, we denote by $\mathcal{I}(x,r,N)$ the collection of all the elements $I_i \in \mathcal{I}$ satisfying the following conditions:
		\begin{enumerate}
			\item $I_{i} \subseteq B(f(x),r)$,
			
			\item $\tilde{I}_{i} \nsubseteq B(f(x),r)$,
			
			\item $I_{i} \cap I^*(N) \neq \emptyset$.
		\end{enumerate}
		
		We observe that if $I_i, I_j \in \mathcal{I}(x,r,N)$ then either $I_i \cap I_{j} = \emptyset$ or $I_i = I_{j}$.
		We claim that
		\[
		B(f(x), r) \cap I(N) \subseteq \bigcup_{I_i \in \mathcal{I}(x,2r,N)}\tilde{I}_i.
		\]
		For any $p \in E$ and $f(p) \in B(f(x),2r) \cap I^*(N)$, there exists $M \in \mathbb{N}$ such that $I_n(p) \subseteq B(f(x),2r)$ when $n \leq M$ and $I_n(p) \nsubseteq B(f(x),2r)$ when $n > M$. Then $I_M(p) \in \mathcal{I}(x,2r, N)$ which implies that $f(p) \in \bigcup_{I_i \in \mathcal{I}(x,2r,N)}\tilde{I}_i$.
		
		Notice that for any $\eps >0$, we have $B(f(x),r) \cap I(N) \subseteq B(f(x), r+\eps) \cap I^*(N) $ by the definition of $E(N)$. Then we have
		\[
		B(f(x),r) \cap I(N) \subseteq B(f(x), 2r) \cap I^*(N) \subseteq \bigcup_{I_i \in \mathcal{I}(x, 2r,N)}\tilde{I}_i.
		\]
		
		Since $E$ is a flat space, $\Card\left( \mathcal{I}(x,2r,N) \right) \leq N$ by the definition of $\mathcal{I}(x,2r,N)$. Since $I_i \cap I^*(N) \neq \emptyset$ for any $I_i \in \mathcal{I}(x,2r,N)$, there exists $M = M(\eta, N)$ such that
		\[
		\frac{1}{M} \leq \frac{\diam(I_i)}{\diam(I'_i)} \leq M
		\]
		and
		\[
		\Delta(I_i, I'_i) \leq M
		\]
		by Proposition \ref{diam}. Therefore, we have
		\begin{align*}
			\diam(\tilde{I}_i) & \leq \diam(I_i) + \diam(I'_i) + \dist(I_i, I'_i)
			\\
			& \leq \diam(I_i) + \diam(I'_i) + M\diam(I_i)
			\\
			& \leq (2M+1) \diam(I_i).
		\end{align*}
		
		Then
		\begin{align*}
			\mu(B(f(x),r) \cap I(N)) & \leq \sum_{I_i \in \mathcal{I}(x,2r,N)}\mu(\tilde{I}_i)
			\\
			& \leq \sum_{I_i \in \mathcal{I}(x,2r,N)}N \diam(\tilde{I}_i)^\alpha
			\\
			& \leq \sum_{I_i \in \mathcal{I}(x,2r,N)}N \left( (2M+1) \diam(I_i)\right)^\alpha
			\\
			& \leq 8^\alpha N^2 (2M+1)^\alpha r^\alpha
		\end{align*}
		
		The second inequality follows from Lemma \ref{mla}. The third inequality comes from the fact that $\diam(\tilde{I}_i) \leq (2M+1) \diam(I_i)$, while the last inequality is a consequence of the fact that $\Card\left( \mathcal{I}(x,2r,N) \right) \leq N$ and $I_{i} \subseteq B(f(x),2r)$ for any $I_i \in \mathcal{I}(x,2r,N)$. This proves \eqref{umb}.
		
		Since $X = \bigcup_{N=1}^\infty I(N)$ and $I(N-1) \subseteq I(N)$ for any $N$, it is clear that $\mu(I(N)) > 0$ when $N$ is sufficiently large. It then follows from the Mass Distribution Principle \ref{mdp} that $\dim_H(X) \geq \dim_H(I(N)) \geq \alpha$ for sufficiently large $N$. Since $\alpha<1$ was arbitrary, we obtain $\dim_H(X)\geq 1$.
	\end{proof}
	
	One may wonder if  conditions \eqref{rdist} and \eqref{cdiam} of Theorem \ref{cd1}  can be weakened.  The following theorem provides one answer to this question.
	
	\begin{Theorem}\label{unioncd1}
		Let $\mathcal{E} = \{E^n\}_{n=1}^\infty$ be a collection of flat metric spaces with hierarchical structure,  where $E^n = \bigcap_{i=1}^\infty \bigcup_{j=1}^{2^i} E^n_{i,j}$ and $E^n_{i,j} \subseteq E^n$ for every $E^n_{i,j}$. Assume that $E^n_{i,j} \subseteq E^{n+1}_{i,j}$ for all $i,j,n$, and that for any $x \in E^n$, the flatness constants $A^n(x)$ are uniformly bounded above by some $A(x) < \infty$.
		
		Assume for any $E^n \in \mathcal{E}$ and for any $x \in E^n$, we have
		\begin{equation}\label{ssdist}
			\lim_{i \to \infty}\lim_{n \to\infty}\Delta\left(E^n_{i}(x), (E^n_{i})'(x)\right) \to 0
		\end{equation}
		and there exists $L = L(x) < \infty$ such that for all $i \in \mathbb{N}$
		\begin{equation}\label{ssudiam}
			\frac{1}{L} \leq \lim_{n \to \infty}\frac{\diam(E^n_{i}(x))}{\diam((E^n_{i})'(x))} \leq L.
		\end{equation}
		Let $\widetilde{E} = \bigcup_{n=1}^\infty E^n$ and $\diam\left( \widetilde{E}\right) < \infty$, then $\dim_C\left(\widetilde{E}\right) \geq 1$.
	\end{Theorem}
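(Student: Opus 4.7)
The plan is to mimic the proof of Theorem \ref{cd1}: construct a probability measure $\mu$ on the quasisymmetric image $f(\widetilde{E})$ satisfying an upper mass bound of the form $\mu(B(y,r))\lesssim r^{\alpha}$ on a subset of positive $\mu$-measure, then invoke the Mass Distribution Principle \ref{mdp}. The twist is that Lemma \ref{mla} cannot be applied directly to $\widetilde{E}$, nor to any single $E^n$, since \eqref{ssdist} and \eqref{ssudiam} only hold in the iterated limit as $n\to\infty$, and so diameter ratios or relative distances at a fixed scale need not be well-behaved for any individual $E^n$. The way around this will be to apply the splitting construction \eqref{measure} on each $E^n$ and pass to a weak limit.

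Concretely, I would fix an $\eta$-quasisymmetry $f\colon\widetilde{E}\to I$ with $\dim_H f(\widetilde{E})<1$, pick $\alpha<1$ with $\alpha>\dim_H f(\widetilde{E})$, and let $\mu_n$ be the probability measure on $f(E^n)$ produced by formula \eqref{measure} applied to the hierarchical structure of $E^n$. Exactly as in the proof of Lemma \ref{mla},
$$\frac{\mu_n(f(E^n_k(x)))}{\diam(f(E^n_k(x)))^{\alpha}} = \prod_{i=1}^{k}\frac{\diam(f(\tilde{E}^n_i(x)))^{\alpha}}{\diam(f(E^n_i(x)))^{\alpha}+\diam(f((E^n_i)'(x)))^{\alpha}}$$
for each $x\in E^n$. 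Using Lemma \ref{relativedistance}, Proposition \ref{diam}, and the hypotheses \eqref{ssdist}--\eqref{ssudiam}, I would show that for every $x\in\widetilde{E}$ the quantity $\lim_{n\to\infty}$ of the right-hand side is uniformly bounded in $k$ by a finite constant $C(x,\eta,\alpha)$: the algebraic inequality $p^{\alpha}+(1-p)^{\alpha}(1-\eps)^{\alpha}\ge 1$ from the proof of Lemma \ref{mla} forces the factors for large $i$ to be at most $1$ in the $n\to\infty$ limit, while the finitely many early factors are controlled by the uniform bound $L(x)$ on limiting diameter ratios. Since $\widetilde{E}$ is bounded, each $\mu_n$ is a probability measure on the compact set $\overline{f(\widetilde{E})}$, and some subsequence $\mu_{n_k}\rightharpoonup\mu$ converges weakly.

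Next, I would imitate the good-set setup of Theorem \ref{cd1} by defining
$$E^{*}(N)=\{x\in\widetilde{E}: C(x,\eta,\alpha)\le N,\ A(x)\le N,\ L(x)\le N,\ \delta(x)\le N\},$$
where $\delta(x)=\sup_i \lim_{n\to\infty}\Delta(E^n_i(x),(E^n_i)'(x))$, together with the Borel set $E(N)=\bigcap_m\bigcup_{x\in E^{*}(N)}B(x,1/m)\supseteq E^{*}(N)$. For $x\in E(N)$ and small $r>0$, I would run the flatness covering argument from the proof of Theorem \ref{cd1} for each sufficiently large $n$: the uniform flatness bound $A^n(x)\le A(x)\le N$ caps the number of $n$th-generation pieces $\tilde{I}^n_i$ needed to cover $B(f(x),2r)\cap f(E^{*}(N))$ by at most $N$, while the uniform limiting bounds on $\Delta$ and on diameter ratios give $\diam(\tilde{I}^n_i)\lesssim_N r$. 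Combined with the mass bound above, this yields $\mu_n(B(f(x),r)\cap f(E^{*}(N)))\lesssim_N r^{\alpha}$ for all large $n$, and weak convergence (with the customary care at boundaries of balls) transfers the estimate to $\mu(B(f(x),r)\cap f(E(N)))\lesssim_N r^{\alpha}$. Since $\widetilde{E}=\bigcup_N E^{*}(N)$ and $\mu$ is a probability measure, $\mu(f(E(N)))>0$ for $N$ sufficiently large, so the Mass Distribution Principle gives $\dim_H f(E(N))\ge\alpha$ and hence $\dim_H f(\widetilde{E})\ge\alpha$; letting $\alpha\uparrow 1$ and varying $f$ delivers $\dim_C\widetilde{E}\ge 1$.

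The main obstacle will be synchronizing the splitting-product bound and the flatness covering estimate across all large $n$ so that both survive passage to the weak-subsequential limit. This is precisely where the uniform upper bound on the flatness constants $A^n(x)$ and the iterated-limit nature of \eqref{ssdist}--\eqref{ssudiam} are decisive: both the mass bound on $\mu_n(f(E^n_k(x)))$ and the covering estimates become essentially $n$-independent once $n$ is large enough for the chosen $x$, after which standard weak-convergence arguments close out the proof.
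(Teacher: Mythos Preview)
Your proposal is correct and follows essentially the same strategy as the paper: build the measures $\mu_n$ on $f(E^n)$ via the splitting formula \eqref{measure}, extract a weak subsequential limit $\mu$, and feed a mass bound into the Mass Distribution Principle through the covering argument of Theorem \ref{cd1}.

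The one organizational difference is worth noting. Rather than running the flatness covering argument for each $n$ and pushing the resulting ball estimates $\mu_n(B(f(x),r)\cap\cdots)\lesssim_N r^\alpha$ through the weak limit, the paper first introduces the \emph{limit pieces} $\widetilde{E}_{i,j}=\bigcup_n E^n_{i,j}$ and establishes the piece-level bound $\mu(f(\widetilde{E}_i(x)))\le C\diam(f(\widetilde{E}_i(x)))^\alpha$ directly, using the monotonicity $E^n_{i,j}\subseteq E^{n+1}_{i,j}$ together with a David--Semmes style convergence lemma. The covering argument from Theorem \ref{cd1} is then run \emph{once}, on the limit structure $\{\widetilde{E}_{i,j}\}$ with the limit measure $\mu$. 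This sidesteps precisely the synchronization issue you flag: in your route, the generations appearing in the $n$th cover depend on $n$, and the threshold $N(x,i)$ at which the product bound kicks in depends on the generation $i$, so one must argue carefully that the two stabilize consistently as $n\to\infty$. The paper's route decouples these steps: the weak limit and piece convergence handle the $n\to\infty$ passage once and for all, after which the covering runs in a single static structure. Your approach buys nothing extra, but it does work; the paper's is just tidier.
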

	
	\begin{proof}
		The idea of the proof is similar to the proofs of Lemma \ref{mla} and Theorem \ref{cd1}. We first construct a sequence of measures  $\mu_n$ analogous to Equation \eqref{measure} on each $E^n$ and then show that $\mu_n$ subconverges to a measure $\mu$ on $\widetilde{E}$ weakly. Finally the proof is finished by applying Mass Distribution Principle \ref{mdp} to $\mu$.
		
		Let $f: \widetilde{E} \to \widetilde{I}$ be an $\eta$-quasisymmetry and $0 < \alpha < 1$. Fix a $E \in \mathcal{E}$ and let $I = f(E)$. The notation in this proof is analogous to the one used in the proof of Lemma \ref{mla} and Theorem \ref{cd1}. Without loss of generality, we let $\diam(I) = \diam(f(E)) = 1$.
		
		We construct a measure on $E$ in the same way as Equation \eqref{measure}, i.e.,
		\begin{equation}
			\mu(I_{i,j}) = \frac{\diam (I_{i,j})^{\alpha}}{\diam (I_{i,j})^{\alpha} + \diam (I'_{i,j})^{\alpha}} \mu(\tilde{I}_{i,j}).
		\end{equation}
		Notice that $\diam(I_{i,j})$ could be $0$ here, however, our definition is still valid.
		
		Thus there exists a sequence of measures $\mu_n$ constructed as above that is supported on $E^n$ for every $n \in \mathbb{N}$.
		
		Fix $x \in \widetilde{E}$, then $x \in E^n$ for sufficiently large $n$. We claim that there exists $N = N(x, i)$ such that when $n \geq N(x, i)$,
		\begin{equation}\label{einterval}
			\mu_n(f(E^n_i(x))) \leq C \diam(f(E^n_i(x)))^{\alpha}
		\end{equation}
		for some constant $C > 0$ which depends on $x,\eta$ and $\alpha$.
		
		We denote by $\sigma(x, i) = \lim_{n \to\infty}\Delta(E^n_{i}(x), (E^n_{i})'(x))$ and it is clearly that $\sigma(x,i) < \infty$ for $i$ sufficiently large. Since $E^n_i(x) \subseteq E^{n+1}_i(x)$ and $(E^n_i)'(x) \subseteq (E^{n+1}_i)'(x)$ for any $n \in \mathbb{N}$, we have
		\[
		\Delta(E^{n+1}_{i}(x), (E^{n+1}_{i})'(x)) \leq \Delta(E^n_{i}(x), (E^n_{i})'(x)).
		\]
		Then there exists $N = N(x,i)$ such that $\Delta(E^n_{i}(x), (E^n_{i})'(x)) < 2\sigma(x,i)$ when $n \geq N$.
		
		Since $\sigma(x,i) \to 0$ when $i \to \infty$ by equation \eqref{ssdist}, there exists an $M= M(x)$ such that when $i \geq M(x)$ and $n \geq N(x,i)$, the following conditions are satisfied: For any $I^n_i = f(E^n_i), (I^n_i)' = f((E^n_i))' $ with $\diam(I^n_i) \geq \diam((I^n_i)')$ and $f(x) \in \tilde{I}^n_i$,
		\begin{enumerate}
			\item $\diam(I^n_i) \geq \frac{1}{3}\diam(\tilde{I}^n_i)$,
			
			\item $\diam(I^n_i) + \diam((I^n_i)')/(1-\eps) > \diam(\tilde{I}^n_i)$ for some $\eps > 0$,
			
			\item $\left( \frac{1}{3} \right)^{\alpha} + \left( \frac{2}{3} \right)^{\alpha}(1-\eps)^{\alpha} \geq 1$.
		\end{enumerate}
		
		Notice that these are all the conditions we need to prove inequality \eqref{gdc} and $N, M, \eps$ are independent of $n$. Moreover, we have for any $n > N(x,i)$,
		\begin{equation}
			\frac{\diam(\tilde{I}^n_i)^{\alpha}}{\diam(I^n_i)^{\alpha} +\diam((I^n_i)')^{\alpha}} \leq C_1
		\end{equation}
		for some $C_1$ depending on $\delta(x,i)$ and $f$ since $\Delta(E^n_{i}(x), (E^n_{i})'(x)) < 2\sigma(x,i)$.
		
		It then follows from the proof of Lemma \ref{mla} that for any $x \in E$, and any $i \in \mathbb{N}$, there exists $N(x,i)$ such that for $n > N(x,i)$ we have
		\begin{equation}
			\mu_n(f(E^n_i(x))) \leq C \diam(f(E^n_i(x)))^{\alpha},
		\end{equation}
		for some constant $C > 0$ which depends on $x,\eta$ and $\alpha$.
		
		To simplify the notation, we define $\widetilde{E}_{i,j} = \bigcup_{n=1}^\infty E^n_{i,j}$. Then $\widetilde{E} = \bigcap_{i=1}^\infty \bigcup_{j=1}^{2^i} \widetilde{E}_{i,j}$. Notice that $\widetilde{E}$ may not have a hierarchical structure since not every nested sequence of $\widetilde{E}_{i,j}$ necessarily has a nonempty intersection. 
		
		For any $x \in \widetilde{E}$, recall that $\widetilde{E}_i(x)$ is the unique $\widetilde{E}_{i,j}$ that contains $x$ and $\widetilde{E}'_i(x)$ is the sibling space of $\widetilde{E}_i(x)$.
		
		Then we have
		\begin{equation}\label{gcdist}
			\lim_{i \to \infty}\Delta\left(\widetilde{E}_{i}(x), \widetilde{E}'_{i}(x)\right) = 0
		\end{equation}
		and there exists $L = L(x) < \infty$ such that
		\begin{equation}\label{gcdiam}
			\frac{1}{L} \leq \frac{\diam(\widetilde{E}_{i}(x))}{\diam(\widetilde{E}'_{i}(x))} \leq L.
		\end{equation}
		
		Since $\mu_n$ is a finite measure for any $n \in \mathbb{N}$, there exists a subsequence of $\mu_n$ that  converges weakly to a measure $\mu$ on $\widetilde{E}$ by \cite[Lemma $8.24$ and $8.26$]{DS97}. Moreover, we have that
		$E^n_i(x)$ converges to $\widetilde{E}_i(x)$ in the sense of definition $8.9$ in \cite{DS97}. Thus
		\begin{equation}\label{gcmdp}
			\mu(f(\widetilde{E}_i(x))) \leq C \diam(f(\widetilde{E}_i(x)))^{\alpha}
		\end{equation}
		for any $\widetilde{E}_i(x)$ by the proof of \cite[Lemma 8.28]{DS97}.
		
		Recall that
		\begin{equation}\label{gcfc}
			A(x)  < \infty.
		\end{equation}
		
		Given equation \eqref{gcdist} and inequalities \eqref{gcdiam}, \eqref{gcmdp}, \eqref{gcfc}, it then follows the proof of Theorem \ref{cd1} verbatim that $\dim_H(f(\widetilde{E})) \geq \alpha$.
	\end{proof}
	
	\begin{Theorem}\label{unioncd1new}
		Let $\mathcal{E} = \{E^n\}_{n=1}^\infty$ be a collection of flat metric spaces with hierarchical structure,  where $E^n = \bigcap_{i=1}^\infty \bigcup_{j=1}^{2^i} E^n_{i,j}$ and $E^n_{i,j} \subseteq E^n$ for every $E^n_{i,j}$. Assume for any $E^n \in \mathcal{E}$ and for any $x \in E^n$, we have
		\begin{equation}\label{srdist}
			\lim_{n \to \infty}\lim_{i \to \infty}\Delta(E^n_{i}(x), (E^n_{i})'(x)) = 0
		\end{equation}
		and there exists $L = L(x, n)$ such that for all $i \in \mathbb{N}$
		\begin{equation}\label{scdiam}
			\frac{1}{L} \leq \frac{\diam(E^n_{i}(x))}{\diam((E^n_{i})'(x))} \leq L.
		\end{equation}
		Let $\widetilde{E} = \bigcup_{n=1}^\infty E^n$, then $\dim_C\left(\widetilde{E}\right) \geq 1$.
	\end{Theorem}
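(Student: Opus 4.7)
My plan is to follow the proof strategy of Theorem \ref{cd1} (via Lemma \ref{mla}) applied separately to each $E^n$, and then to use the iterated-limit hypothesis \eqref{srdist} to cover $\widetilde{E}$ by countably many ``good'' subsets on which the Mass Distribution Principle can be applied. Fix an $\eta$-quasisymmetry $f: \widetilde{E} \to \widetilde{I}$ and $\alpha \in (0, 1)$; since $\widetilde{E} \supseteq E^n$ for each $n$, it suffices to produce some $n_0$ with $\dim_H f(E^{n_0}) \geq \alpha$, and then to let $\alpha \nearrow 1$.

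The key observation is that the hypothesis $\lim_i \Delta(E_i(x), E'_i(x)) = 0$ is used in the proof of Lemma \ref{mla} only to secure the decisive inequality \eqref{mde} for all sufficiently large $i$. But for a fixed $\alpha < 1$, inequality \eqref{mde} already holds for all $p \in [1/3, 1]$ as long as $\eps < \epsilon_0 = \epsilon_0(\alpha)$ for some explicit threshold. Using Lemma \ref{relativedistance} to relate $\Delta$ on the source side to $\Delta$ on the target side, I can therefore carry out the Lemma \ref{mla} construction on each $E^n$: build the hierarchical probability measure $\mu_n$ on $f(E^n)$ via formula \eqref{measure}, and, for any $x \in E^n$ with $\sigma_n(x) := \lim_i \Delta(E^n_i(x), (E^n_i)'(x)) < \epsilon_0/2$, derive a bound
\[
\mu_n(f(E^n_i(x))) \leq C(x, n, \eta, \alpha)\, \diam(f(E^n_i(x)))^\alpha
\]
for all $i$, where $C$ depends on $\sigma_n(x)$, on the initial index beyond which the tail of $\Delta$ falls below $\epsilon_0$, and on the diameter-ratio constant $L(x, n)$ from \eqref{scdiam} (which bounds the finite head of the telescoping product).

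Next, I mirror the Theorem \ref{cd1} argument by defining quantitative good subsets
\[
E^{n,*}(N) = \{x \in E^n : \sigma_n(x) < \epsilon_0/2,\ C(x, n, \eta, \alpha) \leq N,\ A^n(x) \leq N,\ L(x, n) \leq N\}
\]
together with their Borel regularizations $E^n(N) = \bigcap_m \bigcup_{x \in E^{n,*}(N)} B(x, 1/m)$. Flatness of $E^n$ combined with the pointwise bound above gives, by the same chain of estimates as in the proof of Theorem \ref{cd1}, a uniform upper mass bound
\[
\mu_n\bigl( B(f(x), r) \cap f(E^n(N)) \bigr) \leq G(\eta, N)\, r^\alpha
\]
valid for every $f(x) \in f(E^n(N))$ and every $r > 0$. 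By the pointwise iterated limit \eqref{srdist}, for each $x \in \widetilde{E}$ one can take $n$ large enough that $\sigma_n(x) < \epsilon_0/2$ and $N$ large enough to absorb the remaining constants, so that $\widetilde{E} = \bigcup_{n, N} E^{n,*}(N)$ is a countable union.

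The hard part is the final step: producing some pair $(n_0, N_0)$ for which $\mu_{n_0}(f(E^{n_0,*}(N_0))) > 0$, because only then does the Mass Distribution Principle \ref{mdp} applied to $\mu_{n_0}$ yield $\dim_H f(E^{n_0}(N_0)) \geq \alpha$ and hence $\dim_H f(\widetilde{E}) \geq \alpha$. Since $\mu_n$ is a probability measure on $f(E^n)$, it is enough to check that for some $n$ the good set $\{x \in E^n : \sigma_n(x) < \epsilon_0/2\}$ has positive $\mu_n$-mass; I expect this to be forced by \eqref{srdist}, which makes $\sigma_n(x)$ pointwise small along a tail of $n$'s, combined with the monotone convergence $\mu_n(E^{n,*}(N)) \nearrow \mu_n\bigl(\{x \in E^n : \sigma_n(x) < \epsilon_0/2\}\bigr)$ as $N \to \infty$. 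Verifying this positive-mass step cleanly—essentially, trading the pointwise outer limit for a genuinely measure-theoretic statement about the family $\{\mu_n\}$—is the principal technical obstacle of the proof.
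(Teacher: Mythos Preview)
Your approach is essentially the paper's own: fix $\alpha<1$ and the quasisymmetry $f$, pick a single $E=E^{n}\in\mathcal{E}$, build the hierarchical probability measure on $f(E)$ via formula \eqref{measure}, establish the Lemma~\ref{mla}--type bound $\mu(f(E_i(x)))\le C(x,\eta,\alpha,E)\,\diam(f(E_i(x)))^{\alpha}$, and then run the stratification argument of Theorem~\ref{cd1} verbatim to conclude $\dim_H f(E)\ge\alpha$. The paper does not loop over all $n$ and then select one; it simply says ``We may let $\lim_{i\to\infty}\Delta(E_i(x),E_i'(x))$ be sufficiently small for our chosen $E\in\mathcal{E}$'' and proceeds as if, for that $n$, the inner limit $\sigma_n(x)$ is already below the $\alpha$--dependent threshold for \emph{every} $x\in E^{n}$.

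Under that reading, the ``positive-mass step'' you flag as the principal obstacle evaporates: the good set is all of $E^{n}$, $\mu_n$ is a probability measure there, and the Theorem~\ref{cd1} stratification $E^{n}=\bigcup_{N}E^{n,*}(N)$ immediately yields some $N$ with $\mu_n(E^{n,*}(N))>0$. Your more cautious pointwise interpretation of \eqref{srdist} forces you to carry the whole family $\{\mu_n\}$ and then hunt for a pair $(n_0,N_0)$; as you correctly observe, a purely pointwise hypothesis on $\sigma_n(x)$ does not by itself transfer to a measure-theoretic statement across the unrelated spaces $E^n$. So the architecture of your argument matches the paper's exactly; the only divergence is that the paper resolves your ``hard part'' by invoking the outer limit in \eqref{srdist} to fix a single $n$ up front, rather than by any a posteriori selection among the $\mu_n$.
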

	
	\begin{proof}
		The proof here shares the same idea as the proof of Lemma \ref{mla} and Theorem \ref{cd1}. We construct a measure  $\mu$ analogous to Equation \eqref{measure} on a specific $E \in \mathcal{E}$ and then  applying Mass Distribution Principle \ref{mdp} to $\mu$ to finish the proof.
		
		Let $f: \widetilde{E} \to \widetilde{I}$ be an $\eta$-quasisymmetry and $0 < \alpha < 1$. Fixed a $E \in \mathcal{E}$ and let $I = f(E)$.  The notation in this proof is analogous to the one used in the proof of Lemma \ref{mla} and Theorem \ref{cd1}. Without loss of generality, we let $\diam(I) = \diam(f(E)) = 1$.
		
		We may let $\lim_{i \to \infty}\Delta(E_{i}(x), E'_{i}(x))$ be sufficiently small for our chosen $E \in \mathcal{E}$. Then for any $I_i, I'_i \subseteq I$ with $\diam(I_i) \geq \diam(I'_i)$, we have for sufficiently large $i$,
		\begin{enumerate}
			\item $\diam(I_i) \geq \frac{1}{3}\diam(\tilde{I}_i)$,
			
			\item $\diam(I_i) + \diam(I'_i)/(1-\eps) > \diam(\tilde{I}_i)$ for some $\eps > 0$,
			
			\item $\left( \frac{1}{3} \right)^{\alpha} + \left( \frac{2}{3} \right)^{\alpha}(1-\eps)^{\alpha} \geq 1$.
		\end{enumerate}
		
		Notice that these are all the conditions we need to prove inequality \eqref{gdc}. It then follows from the proof of Lemma \ref{mla} that there exists a measure $\mu$ on $I$ such that for any $x \in E$,
		\begin{equation}\label{smsubset}
			\mu(f(E_i(x))) \leq C \diam(f(E_i(x)))^{\alpha}
		\end{equation}
		for some constant $C > 0$, which depends on $x,\eta, \alpha$ and $E$.
		
		Given equation  \eqref{srdist} and inequalities \eqref{scdiam}, \eqref{smsubset}, it then follows the proof of Theorem \ref{cd1} verbatim that $\dim_H(I) \geq \alpha$.
	\end{proof}
	
	\section{Bedford-McMullen sets with uniform fibers are minimal}\label{MSS}
	
	In this section we construct subsets of the plane which are minimal for conformal dimension and prove Theorem \ref{thm:uniformfibers}, which implies Corollary \ref{thm:mg}.  The minimal subsets constructed below include many self-affine sets, graphs of functions (continuous or not) and can attain any dimension $d$ in $[1,2]$.
	
	\subsection{Bedford-McMullen sets with uniform fibers}  We recall the construction of Bedford-McMullen sets as described in the introduction, meanwhile generalizing it and introducing some notation needed for the proofs below.
	
	Let  ${\ell,m}\in\mathbb{N}$  be positive integers so that $\ell<m$.  Suppose we are given a sequence $\{D_i\}_{i=1}^{\infty}$ of subsets (which we also call patterns) of $\{1,\ldots,m\}\times \{1,\ldots,\ell\}$.   For every $n\geq 1$ we let $N_n=\Card(D_1)\cdot\ldots\cdot\Card(D_n)$.  In particular,  $N_1=\Card(D_1)$.
	
	Define a compact set $K=K(\{D_i\})\subset[0,1]^2$ as follows.  Divide the unit square into $m \ell$  closed and congruent $m^{-1}\times \ell^{-1}$ rectangles with pairwise disjoint interiors.  Keep the rectangles corresponding to the pattern $D_1$,  and delete the rest.  Let $\mathcal{K}_1=\{K_{1,1},\ldots,K_{1,N_1}\}$  be the collection of  retained rectangles of \emph{generation $1$},  and let $K_1=\cup_j K_{1,j}$.  Continuing  by induction,  suppose that a collection $\mathcal{K}_n=\{K_{n, 1},\ldots,K_{n,N_n}\}$ of  \emph{generation $n$} rectangles has been defined,  and its elements are  closed and congruent $m^{-n}\times \ell^{-n}$ rectangles with disjoint interiors.  Let $K_n = \cup_{j} K_{n,j}$.  Divide each $K_{n,j}\in\mathcal{K}_n$  into $m \ell$  congruent, closed $m^{-(n+1)}\times \ell^{-(n+1)}$ rectangles with disjoint interiors.  In each $K_{n,j}\in\mathcal{K}_n$ keep the rectangles corresponding to the pattern $D_{n+1}$,  and delete the rest.   The resulting collection $\mathcal{K}_{n+1}=\{K_{n+1,1},\ldots,K_{n+1,N_{n+1}}\}$ is the family of generation $(n+1)$ rectangles,  and their union will be denoted by $K_{n+1}$.  We define the set $K$ as follows:
	\begin{align}\label{def:det-set}
		K=K(\{D_n\}_{n=1}^{\infty})=\bigcap_{n=1}^{\infty} K_n = \bigcap_{n=1}^{\infty} \bigcup_{j=1}^{N_n}K_{n,j}=\bigcap_{n=1}^{\infty} \bigcup_{K_{n,j}\in\mathcal{K}_n}K_{n,j}.
	\end{align}
	
	Recall that if all the patterns in the construction above are the same,  i.e.,  there is a $D\subset\{1,\ldots,m\}\times \{1,\ldots,\ell\}$ such that $D_i=D$, for all $i\geq 1$,  then the set (\ref{def:det-set}) is called a Bedford-McMullen carpet,  and is denoted by $K(D)$, see also \cite{BP17}.
	
	The construction above can be generalized by dividing each rectangle of generation $i$ into $m_i \ell_i$ congruent rectangles where $\{\ell_i\}$ and $\{m_i\}$ are arbitrary sequences of integers such that $1<\ell_i<m_i$.  We do not consider this more general situation,  since we are mostly concerned with the analogy with the Brownian graph for which the motivating example can be thought to be the case when $\ell_i=2$ and $m_i=4$ for $i\geq 1$.
	
	Computing conformal dimension of sets defined by (\ref{def:det-set}) can be challenging for general sequences of patterns $\{D_i\}$.  For instance,  even if $K=K(D)$ the conformal dimension of $K$ is unknown, unless we make the extra assumption below.
	
	Inspired by the terminology in \cite{BP17} we will say that a set $K$ constructed as above  has \emph{uniform fibers} if there exists $1\leq k < m$ such that for every $i\geq 1$ the pattern $D_i$ has $k$ elements in \emph{every} row.  Therefore,   for every $n\geq 1$ we have $\Card(D_n)=k\ell$ and $N_n=(k\ell)^n$.
	
	\begin{Remark}\label{remark:different-patterns}
		A slightly more general construction of sets with uniform fibers can be obtained if one allows to replace each rectangle $K_{n,j}$,  of generation $n$ by an \emph{arbitrary} pattern $D_{n+1,j}\subset\{1,\ldots,m\}\times \{1,\ldots,\ell\}$ of $k\ell$ rectangles, as long as  $D_{n+1,j}$ has $k$ elements in every row for all $j\in\{1,\ldots,(k\ell)^n\}$. For such sets Proposition \ref{prop:dim} and Theorem \ref{thm:uniformfibers} below hold true, and the proofs are exactly the same. This more general construction will be used below to prove Corollary \ref{thm:mg} about the existence of minimal graphs of every dimension $d\in[1,2]$. 
	\end{Remark}
	
	Note, that for every $n\geq1$ and every $\ell$-adic interval $J$ of generation $n$ on the $y$-axis there are exactly $k^n$ rectangles $K_{n,j}$  such that $\pi_y(K_{n,j})=J$, where $\pi_y$ is the orthogonal projection onto the second coordinate.  We will say that two rectangles $K_{n,j_1}$ and $K_{n,j_2}$ are \emph{at the same height} if they have the same projections:  $\pi_y(K_{n,j_1})=\pi_y(K_{n,j_2})$.
	
	The following proposition can be proved like the analogous property for self-affine sets in \cite{BP17}.  We provide a slightly different proof for the convenience of the reader.
	
	\begin{Proposition}\label{prop:dim}
		Let $1\leq k<m$ and $d=\log_mk$.  If  $K=K(\{D_i\})$ is a Bedford-McMullen set with uniform fibers then
		\begin{enumerate}
			\item $\dim_H K = 1+d$, \label{spacedim}
			
			\item $\dim_H K \cap Z_a = d$ for almost every $a\in[0,1]$, where $Z_a$ is the horizontal line passing through the point $(0,a)$.\label{fiberdim}
		\end{enumerate}
	\end{Proposition}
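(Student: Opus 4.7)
The proof parallels the classical Bedford-McMullen analysis; in each part I would pair an efficient cover (for the upper bound) with the Mass Distribution Principle (Theorem \ref{mdp}) applied to a natural measure (for the lower bound). For the upper bound in (1), subdivide each generation-$n$ rectangle of dimensions $m^{-n} \times \ell^{-n}$ vertically into $\asymp (m/\ell)^n$ pieces of diameter $\asymp m^{-n}$; this produces a cover of $K$ by $\asymp (km)^n$ such pieces, giving $\mathcal{H}^{1+d}_{\asymp m^{-n}}(K) \lesssim (km)^n \cdot m^{-n(1+d)} = 1$ (using $k = m^d$), hence $\dim_H K \leq 1+d$. The uniform-fibers hypothesis also forces $\pi_y(K) = [0,1]$ and ensures that for every $a \in (0,1)$ outside the countable set of $\ell$-adic rationals, $K_n \cap Z_a$ consists of exactly $k^n$ intervals of length $m^{-n}$; this cover gives $\dim_H(K \cap Z_a) \leq d$ for almost every $a$.

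For the matching lower bound in (1), define the natural probability measure $\mu$ on $K$ by assigning mass $(k\ell)^{-n}$ to each generation-$n$ rectangle $K_{n,j}$, distributed uniformly within its approximate-square decomposition so that every approximate square of side $\asymp m^{-n}$ carries mass $(km)^{-n}$. For any $r>0$ pick $n$ with $m^{-(n+1)} < r \leq m^{-n}$; a ball $B(x,r)$ meets only $O(1)$ such squares, so $\mu(B(x,r)) \lesssim (km)^{-n} \asymp r^{1+d}$, and Theorem \ref{mdp} yields $\dim_H K \geq 1+d$. For the slice lower bound, the key structural fact — a direct consequence of uniform fibers — is that a horizontal line passing through any retained generation-$n$ rectangle $K_{n,j}$ meets exactly $k$ of its generation-$(n+1)$ subrectangles. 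This consistency lets us define, for each non-$\ell$-adic $a$, a Cantor-type probability measure $\mu^a$ on $K \cap Z_a$ by the rule $\mu^a(K_{n,j} \cap Z_a) = k^{-n}$ for every retained $K_{n,j}$ met by $Z_a$. Since the $k^n$ slice intervals at level $n$ are pairwise disjoint intervals of length $m^{-n}$, any ball of radius $r \asymp m^{-n}$ on $Z_a$ meets $O(1)$ of them, so $\mu^a(B(x,r)) \lesssim k^{-n} \asymp r^d$, and Theorem \ref{mdp} gives $\dim_H(K \cap Z_a) \geq d$.

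The only place where the uniform fibers hypothesis is used in an essential way is in establishing the consistency and correct scaling of the slice measures $\mu^a$: for general Bedford-McMullen carpets the number of subrectangles met by a generic slice varies with position and scale, which is precisely why Hausdorff and Minkowski dimensions can disagree there (as in the example cited from Bárány-Pollicott). Under our hypothesis the rigidity of having the same row counts at every level collapses this discrepancy, and the remaining estimates are routine scale-matching between $r$ and the nearest power of $m^{-1}$.
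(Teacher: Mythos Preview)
Your proposal is correct and tracks the paper's proof closely for the upper bounds in both parts and for the slice lower bound in (2): the paper uses exactly your cover of $K$ by $(k\ell)^n\cdot(m/\ell)^n=(km)^n$ pieces of size $\asymp m^{-n}$, and exactly your measure $\mu_a$ with $\mu_a(K_{n,j}\cap Z_a)=k^{-n}$ together with the Mass Distribution Principle.

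The one genuine divergence is the lower bound for $\dim_H K$. You build the global measure $\mu$ with $\mu(K_{n,j})=(k\ell)^{-n}$ and argue directly that $\mu(B(x,r))\lesssim r^{1+d}$ via an approximate-square count, then invoke Theorem~\ref{mdp}. The paper instead first establishes (2) and then appeals to the Marstrand slicing theorem \cite[Thm.~1.6.1]{BP17}: since a positive-measure set of horizontal slices has dimension $d$, the total set has dimension at least $1+d$. Your route is more self-contained and avoids the external slicing result, at the cost of needing the observation that the $\pi_y$-pushforward of $\mu$ restricted to any generation-$n$ rectangle is $k^{-n}$ times Lebesgue on its projection (this is what makes your ``mass $(km)^{-n}$ per approximate square'' claim work, and it genuinely requires uniform fibers at \emph{all} generations, not just up to $n$; you might make this explicit). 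The paper's route is shorter once (2) is in hand and cleanly separates the fiber analysis from the global estimate. Both are standard; the paper's choice also foreshadows the disintegration formula $\mu(A)=\int_0^1\mu_a(A\cap\pi_y^{-1}(a))\,da$ used later in the minimality proof.
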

	
	\begin{proof}
		We first prove \eqref{fiberdim}.  Pick $a\in[0,1]$ which cannot be written as $\mathscr{i}/\ell^n$ for some  $\mathscr{i},n\in\mathbb{N}$. Note, that the collection of such $a$'s has full measure in $[0,1]$ (in fact the complement is countable). 
		
		The uniform fibered condition implies that the set $K_a:=K\cap Z_a$ is a Cantor set obtained by dividing $[0,1]$ into $m$ equal intervals, selecting $k$ of them, and then repeating this procedure ad infinitum. Thus,  for every $a$ as above there is a set of indices $J_n(a)\subset \{1,\ldots,N_n\}$ such that $\mathrm{Card}(J_n(a))=k^n$,   $\diam(K_{n,j}\cap Z_a)=m^{-n}$ for $j\in J_n(a)$, and
		$$K_a=\bigcap_{n=1}^{\infty} \bigcup_{j\in J_n(a)} K_{n,j}\cap Z_a.$$
		Therefore, $\dim_H (K_a) \leq \overline{\dim}_M (K_a) \leq \lim_{n\to\infty} {\log k^n }/({\log m^n})=\log_m k$, where $\overline{\dim}_M K_a$ denotes the upper Minkowski dimension of $K_a$.
		
		To show the opposite inequality we define a probability measure $\mu_a$ on $K_a$, by setting
		\begin{align}\label{def:conditional-measures}
			\mu_a(K_{n,j}\cap Z_a) = k^{-n}=(m^{-n})^d=\left(\diam\left(K_{n,j}\cap Z_a\right)\right)^d,
		\end{align}
		for every $j\in J_n(a)$. Therefore, if  $I\subset K\cap Z_a$ is such that $I=K_{n,j}\cap Z_a$ for some $j\in J_n(a)$ then
		$\mu_a(I) =k^{-n}=(m^{-n})^d=(\diam (I))^d$. More generally, for every $I\subset K\cap Z_a$ such that $m^{-(n+1)}<\diam (I) \leq m^{-n}$ there are at most two ``adjacent'' $m$-adic intervals $I'$ and $I''$ in $Z_a$ such that  $I\subset I'\cup I''$ and $\diam(I')=\diam (I'')=m^{-n}$.  Therefore,  we have
		\begin{align}\label{ineq:growth1}
			\mu_a (I) \leq \mu_a (I')+\mu_a (I'') \leq 2 \diam(I')^d < 2 m^d (\diam (I))^d,
		\end{align}
		where the last inequality holds since $\diam (I) > 1/m^{n+1} = \diam (I') /m$. By Mass Distribution Principle \ref{mdp} we have $\dim_H K_a\geq d$, which finishes the proof of \eqref{fiberdim}.
		
		To prove \eqref{spacedim} first note that from \eqref{fiberdim} and Marstrand slicing theorem, see e.g. \cite[Thm. 1.6.1]{BP17}, it follows that $\dim_H K\geq 1+\log_mk$.  To get the upper bound in \eqref{spacedim}, note that for every $n\geq 1$ the set $K$ can be covered by $(\Card(D))^n=(k\ell)^n$ generation $n$ rectangles of width $1/m^n$ and height $1/\ell^n$.  Dividing each such rectangle into $\lesssim (m/\ell)^n$ squares of side-length $1/m^n$ we obtain that
		\begin{align*}
			\dim_H K\leq\overline{\dim}_M K \leq \lim_{n\to\infty} \frac{\log((k\ell)^n \cdot (m/\ell)^n)}{\log m^n} =
			1+\log_m k,
		\end{align*}
		which completes the proof.
	\end{proof}
	
	The following is the main result of this section.
	
	\begin{Theorem}\label{thm:uniformfibers}
		Every  Bedford-McMullen set with uniform fibers $K$ is minimal for conformal dimension.
	\end{Theorem}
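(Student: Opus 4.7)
The plan is to apply Theorem \ref{fmodest} with target exponent $q = 1 + \log_m k = \dim_H K$. I need (i) a doubling measure $\mu$ on $K$ with the upper mass bound $\mu(B(x,r)) \lesssim r^q$, (ii) a family $\mathcal{E}$ of subsets of $K$ each with $\dim_C E \geq 1$, equipped with measures $\lambda_E$ satisfying the linear-type lower bound \eqref{linear}, and (iii) positivity of the Fuglede modulus $\Mod_1(\{\lambda_E\})$. I let $\mathcal{E}$ be the family of \emph{vertical Cantor subsets} of $K$: at each generation $n$, within every sub-row of every rectangle already kept through generation $n-1$, select one of the $k$ available sub-rectangles; the nested intersection $E$ then contains $\ell^n$ rectangles of size $m^{-n}\times \ell^{-n}$ at generation $n$, one per row. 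The projection $\pi_y\colon E \to [0,1]$ is a continuous surjection that is injective outside the countable set of $\ell$-adic heights. I set $\lambda_E$ to be the pullback of Lebesgue measure on $[0,1]$ under $\pi_y|_E$ (equivalently $\mathcal{H}^1|_E$), and $\mu = \int_0^1 \mu_y\,dy$, where $\mu_y$ is the $d$-regular Cantor measure on $K\cap Z_y$ from Proposition \ref{prop:dim}, with $d = \log_m k$.

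The upper mass bound on $\mu$ is immediate from the horizontal disintegration: for $x = (x_0, y_0)\in K$ and $r > 0$,
\[
\mu(B(x,r)) \;=\; \int_{y_0-r}^{y_0+r} \mu_y\bigl(B(x,r) \cap Z_y\bigr)\,dy \;\lesssim\; r \cdot r^d \;=\; r^{1+d},
\]
using the estimate $\mu_y(I) \leq 2m^d (\diam I)^d$ established in the proof of Proposition \ref{prop:dim}; the product-style structure of $\mu$ also makes it doubling (and if this failed we could invoke the Remark following Theorem \ref{fmodest} to drop that hypothesis). The lower bound on $\lambda_E$ is likewise immediate: for $x \in E$ and $r \in (\ell^{-n-1}, \ell^{-n}]$, the generation-$n$ rectangle of $E$ containing $x$ projects to a $y$-interval of length $\ell^{-n}\gtrsim r$, so $\lambda_E(B(x,r)\cap E)\gtrsim r$, which is stronger than required by \eqref{linear} for any $s > 1$. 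Each vertical Cantor set $E$ has $\dim_C E \geq 1$ by Theorem \ref{cd1}: $E$ inherits a hierarchical structure from the nesting of its defining rectangles (after a binary regrouping of the $\ell$-ary tree when $\ell > 2$), it is flat because only $O(1)$ sibling pairs can meet any ball, and conditions \eqref{rdist}--\eqref{cdiam} hold with absolute constants since all generation-$n$ rectangles are congruent and siblings share a common parent boundary.

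The crux is the modulus positivity $\Mod_1(\{\lambda_E\}) > 0$, which I establish by a probabilistic averaging identity. Equip the space of selectors producing elements of $\mathcal{E}$ with the uniform product probability measure $\mathbb{P}$: at each generation $n$, within every sub-row of every rectangle surviving through generation $n-1$, pick one of the $k$ available sub-rectangles uniformly at random, with all such choices independent. This produces a random vertical Cantor set $E_\omega \in \mathcal{E}$. For every generic $y\in [0,1]$, the unique point $(\phi_\omega(y), y) \in E_\omega \cap Z_y$ has its $x$-coordinate distributed exactly as $\mu_y$ on $K\cap Z_y$, because at each level $n$ the containing rectangle is a uniformly chosen one among the $k^n$ rectangles of $K\cap Z_y$ at that scale. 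Consequently, for any Borel $\rho \geq 0$ admissible for $\{\lambda_E\}$, Tonelli's theorem gives
\[
\int_K \rho\,d\mu \;=\; \int_0^1 \int_{K\cap Z_y} \rho\,d\mu_y\,dy \;=\; \mathbb{E}_\omega \int_0^1 \rho(\phi_\omega(y), y)\,dy \;=\; \mathbb{E}_\omega \int_{E_\omega}\rho\,d\lambda_{E_\omega} \;\geq\; 1,
\]
since admissibility forces the inner integral to be at least $1$ for every $\omega$. Hence $\Mod_1(\{\lambda_E\}) \geq 1 > 0$, and Theorem \ref{fmodest} (with $p=1 < q = 1 + \log_m k$) delivers $\dim_C K \geq 1 + \log_m k = \dim_H K$. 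I expect the main technical obstacle to be the hypothesis check for Theorem \ref{cd1} on the vertical Cantor sets, particularly the binary regrouping and the flatness verification when $\ell > 2$; the Fuglede modulus positivity itself falls out cleanly from the Fubini identity above.
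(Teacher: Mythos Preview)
Your proposal is correct and shares the same scaffolding as the paper: the same measure $\mu$ disintegrated over horizontal slices, the same family $\mathcal{E}$ of vertical Cantor sets with pulled-back Lebesgue measures $\lambda_E$, and the same appeal to Theorem \ref{fmodest}. The substantive difference lies in the proof that $\Mod_1(\mathbf{E}) > 0$. The paper uses a deterministic greedy construction (Lemma \ref{lemma:construction}): starting from an admissible $\rho$, first replaced by a lower-semicontinuous majorant via Vitali--Carath\'eodory, one repeatedly selects in each row the rectangle with smallest $\rho$-average, producing a decreasing sequence of horizontally constant functions $\rho_n$ whose $\liminf$ $\rho_\infty$ dominates $\rho$ on a specific vertical Cantor set $F\in\mathcal{E}$; the disintegration then yields $\int_K \rho\,d\mu \geq \int_K \rho_\infty\,d\mu = \int_0^1 \bar\rho\,da \geq 1$. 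Your route is a probabilistic averaging: under the uniform product probability on selectors, the random point $\phi_\omega(y)$ has law exactly $\mu_y$ for each generic $y$, so Tonelli gives $\int_K \rho\,d\mu = \mathbb{E}_\omega \int_{E_\omega} \rho\,d\lambda_{E_\omega} \geq 1$ directly, with no need for semicontinuity or the iterative construction. This is a genuine simplification here; the paper's greedy approach, however, is what transfers to the Brownian-graph setting in Section \ref{BGMAS}, where no such clean product probability on selectors is available. A secondary difference: for $\dim_C E \geq 1$ the paper does not invoke Theorem \ref{cd1} via a binary regrouping of the $\ell$-ary tree but instead proves a direct $\ell$-ary analogue in Lemma \ref{lemma:vertical-cantor}, which sidesteps the bookkeeping you anticipate as the main obstacle. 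Finally, the paper verifies doubling of $\mu$ explicitly via approximate squares rather than deferring to the Remark after Theorem \ref{fmodest}.
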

	
	Observe that Theorem \ref{thm:minimalBM} follows from Theorem \ref{thm:uniformfibers} and Proposition \ref{prop:dim}.
	
	To prove Theorem \ref{thm:uniformfibers} we would like to use Theorem \ref{fmodest}. For this we will construct the measure $\mu$, families $\mathcal{E}$ and $\{\lambda_E\}_{E\in\mathcal{E}}$ so that conditions of that theorem are satisfied.
	
	\subsection{Constructing the measure $\mu$}\label{section:mu} The measure $\mu$ is defined by equidistributing the mass among all the rectangles of the same generation.  More specifically, let $\mu(K)=1$.  For every rectangle $K_{n,j}\in\mathcal{K}_n$ of generation $n$ let
	\begin{align}\label{def:mu}
		\mu\left(K_{n,j}\cap K\right) = (k\ell)^{-n}.
	\end{align}
	This procedure defines a probability Borel measure $\mu$ on the compact set $K\subset\mathbb{R}^2$  which is therefore a Radon measure, see e.g., \cite[Theorem 2.18]{Rud86}.
	Note that if $I$ is an $\ell$-adic interval in $[0,1]$ of generation $n$ then there are $k^n$ generation $n$ rectangles in $\mathcal{K}_n$ which project under $\pi_y$ to $I$.  Therefore we have
	\begin{align}
		\mu(\pi^{-1}_y(I)) = k^n (1/(k\ell)^n) = 1/(\ell)^n = \mathcal{L}^1(I).
	\end{align}
	This implies that the pushforward of $\mu$ onto the $y$-axis is the restriction of the Lebesgue measure on $\{0\}\times[0,1]$, that is  $\pi_y^*(\mu)=\mathcal{L}^1$.
	By the Disintegration Theorem \cite[Theorem $5.3.1$]{AGS05} there is a family of probability measures $\{\nu_a\}_{a\in[0,1]}$,  which is $\mathcal{L}^1$-a.e. uniquely determined  and is supported on the sets $K_a$ s.t.
	\begin{align}
		\mu(A) = \int_0^1 \nu_a\left(A\cap \pi^{-1}_y(a)\right) \, da.
	\end{align}
	
	It turns out that in this case the family $\{\nu_a\}$ a.e.  coincides with $\{\mu_a\}$ defined in equation \eqref{def:conditional-measures}.
	\begin{Lemma}
		For every Borel set $A\subset K$ we have
		\begin{align}\label{eq:disint}
			\mu(A) = \int_0^1 \mu_a\left(A\cap \pi^{-1}_y(a)\right) \, da,
		\end{align}
		where $\mu_a$ satisfies (\ref{def:conditional-measures}).
	\end{Lemma}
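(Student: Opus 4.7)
The plan is to verify (\ref{eq:disint}) directly on the $\pi$-system of generation-$n$ rectangles $\{K_{n,j}\cap K : n\geq 1,\, j\in\{1,\ldots,N_n\}\}$ and then extend it to every Borel subset of $K$ by the standard Dynkin $\pi$-$\lambda$ argument. Equivalently, one may observe that the computation below shows that $\{\mu_a\}$ is a disintegration of $\mu$ with respect to $\pi_y$, and the $\mathcal{L}^1$-a.e.\ uniqueness clause of Theorem 5.3.1 of AGS then identifies $\mu_a$ with $\nu_a$ for almost every $a$. Before starting, I extend $\mu_a$ to the countable, and hence $\mathcal{L}^1$-null, set of heights $a\in\{\mathscr{i}/\ell^n:\mathscr{i},n\in\mathbb{N}\}$ by declaring it to be the zero measure there; this has no effect on the right-hand side of (\ref{eq:disint}).

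The base case is the following. Fix $n\geq 1$ and a generation-$n$ rectangle $K_{n,j}\in\mathcal{K}_n$. For a non-$\ell$-adic height $a$ in the interior of $\pi_y(K_{n,j})$, the slice $K_{n,j}\cap K\cap Z_a$ is one of the generation-$n$ building blocks of the Cantor set $K_a$, so by (\ref{def:conditional-measures}) its $\mu_a$-mass equals $k^{-n}$; for $a\notin\pi_y(K_{n,j})$ the slice is empty. Since $\pi_y(K_{n,j})$ has Lebesgue length $\ell^{-n}$, integrating over $a$ yields
\begin{align*}
\int_0^1 \mu_a\bigl(K_{n,j}\cap K\cap Z_a\bigr)\,da = \ell^{-n}\cdot k^{-n} = (k\ell)^{-n} = \mu(K_{n,j}\cap K)
\end{align*}
by (\ref{def:mu}). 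Moreover the integrand is the function $k^{-n}\chi_{\pi_y(K_{n,j})}(a)$, which is manifestly Borel measurable.

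To conclude, let $\mathcal{D}$ be the collection of Borel sets $A\subset K$ for which $a\mapsto \mu_a(A\cap Z_a)$ is Borel measurable and (\ref{eq:disint}) holds. By monotone convergence and complementation within $K$ (which has finite $\mu$-mass, so $\mathcal{D}$ is closed under complements relative to $K$), $\mathcal{D}$ is a $\lambda$-system. It contains the $\pi$-system consisting of finite intersections of sets of the form $K_{n,j}\cap K$ (these are either nested or disjoint, so this $\pi$-system is just the collection itself together with $\emptyset$), and this $\pi$-system generates the Borel $\sigma$-algebra on $K$. Hence the $\pi$-$\lambda$ theorem gives $\mathcal{D}=\mathcal{B}(K)$, proving (\ref{eq:disint}). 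No substantial obstacle arises; the only mild technicality, handling the $\ell$-adic heights on which $\mu_a$ was not initially defined, is dispatched at the outset.
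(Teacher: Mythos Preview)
Your proof is correct and follows essentially the same approach as the paper: verify the identity on the generating rectangles $K_{n,j}\cap K$ by the direct computation $\int_{\pi_y(K_{n,j})} k^{-n}\,da = (k\ell)^{-n}$, then extend to all Borel sets. The paper invokes outer regularity of $\mu$ for the extension, while you use a $\pi$--$\lambda$ argument; one small caveat is that the closed rectangles $K_{n,j}$ can share boundary edges, so they are not literally ``nested or disjoint,'' but since those boundaries are $\mu$- and $\mu_a$-null this is harmless (or one can pass to half-open rectangles).
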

	\begin{proof}
		If $A=K_{n,j}\in\mathcal{K}_n$ for some $n\geq 1$,  then by  (\ref{def:mu}) and (\ref{def:conditional-measures})  we have
		\begin{align*}
			\int_{0}^1 \mu_a \left(A\cap\pi_y^{-1}(a)\right) da = \int_{\pi_{y}(A)}k^{-n} da = k^{-n} \mathcal{L}^1(\pi_{y}(A)) = k^{-n}\ell^{-n}=\mu(A).
		\end{align*}
		Therefore,  (\ref{eq:disint}) holds if $A=\cup_{j\in J} K_{n,j}$, whenever $J$ is any subset of $\{1,\ldots,N_n\}$.  By outer regularity of $\mu$, equation (\ref{eq:disint}) holds for all Borel sets $A\subset K$ as well.
	\end{proof}
	
	Equation (\ref{eq:disint})  implies that for every $x\in K$ and $r>0$ the measure $\mu$ satisfies
	\begin{align}
		\mu(B(x,r))\lesssim r^{1+d}.
	\end{align}
	where,  as in Proposition \ref{prop:dim},  $d=\log_m k$.  Indeed,   using (\ref{eq:disint}) and  (\ref{ineq:growth1}),  for any $B=B(x,r)\subset K$  we have
	\begin{align}
		\mu(B)=\int_{\pi_{y}(B)} \mu_a\left(B\cap Z_a\right) \ da \lesssim \mathcal{L}^1(\pi_y(B)) \diam(B)^d \lesssim\diam(B)^{1+d}.
	\end{align}
	
	\subsection{Constructing the ``vertical'' Cantor sets and the associated measures}\label{section:vertical}
	
	We construct a family of Cantor sets $\mathcal{E}=\{E\}$ in $K$ as follows.  From each of the $\ell$ rows choose \emph{one} $m^{-1}\times \ell^{-1}$ rectangle of generation $1$ out of $k$ possible ones.  Thus, we chose $\ell$ rectangles $E_{1,1},\ldots,E_{1,\ell}$ from $\mathcal{K}_1$, so that their $\pi_y$ projections cover the entire vertical interval $\{0\}\times [0,1]$.  Next from every row of every rectangle $E_{1,j}$,  choose \emph{one} rectangle of generation $2$ from $\mathcal{K}_2$. The resulting rectangles will be denoted $E_{2,j}$ with $j=1,\ldots, \ell^2$.  Continuing by induction, in the $n$-th step we obtain rectangles $E_{n,j}, j=1,\ldots,\ell^n$ which have pairwise disjoint interiors.  Moreover, for every $E_{n,j}$ we have
	\begin{align}
		\pi_y(E_{n,j})=\left[\frac{j-1}{\ell^{n}},\frac{j}{\ell^{n}}\right]
	\end{align}  
	A \emph{vertical Cantor set}  $E\subset  K$ then can be defined as usual by 
	\begin{align}
		E=\bigcap_{n=1}^{\infty} \bigcup_{j=1}^{\ell^n}E_{n,j}.
	\end{align}  
	The family of all vertical Cantor sets $E$ in $K$ as above will be denoted by $\mathcal{E}$.
	
	It follows from the definitions that the restriction of $\pi_y$ onto every vertical Cantor set $E$ is injective except for at the preimages of the $\ell$-adic points,  at which it is two-to-one.  Hence we can define a measure $\lambda_{E}$ by pushing forward the Lebesgue measure from $\{0\}\times [0,1]$ to $E$. That is $\lambda_{E}=(\pi_y^{-1})_*(\mathcal{L}^1)$,  or more concretely for every $A\subset E$ we have
	\begin{align*}
		\lambda_{E}(A)=\mathcal{L}^1(\pi_y(A)).
	\end{align*}
	We will denote $\mathbf{E}=\{\lambda_E\}_{E\in\mathcal{E}}$, i.e., the family of pull backs of the Lebesgue measure under $\pi_y$ to all the ``vertical'' Cantor sets in $K$.
	
	Next, we observe that every $\lambda_{E}\in\mathbf{E}$ satisfies (\ref{linear}).  Indeed,  let $x \in E$ and $r  > 0$. Then there exists $n \in \mathbb{N}$ such that $1/\ell^{n+1} < r \leq 1/\ell^{n}$.  Choose $j$ so that  $x\in E_{n+2,j} \in \mathcal{K}_{n+2}$. Since $\diam(E_{n+2,j}) < 1/\ell^{n+1}$, it follows that $E_{n+2,j} \subseteq B(x,r)$.  Therefore,
	\[
	\lambda_E(B(x,r) \cap E) \geq \lambda_E\left( E_{n+2,j} \cap E\right) \geq r/\ell^2,
	\]
	and (\ref{linear}) holds with $s=1$.
	
	\subsection{Minimality of vertical Cantor sets}\label{vertical:minimal}
	
	We would like to apply (the proof of) Theorem \ref{cd1} to show that every vertical Cantor set $E\in\mathcal{E}$ has conformal dimension $1$. Just like before the proof is based on the Mass Distribution Principle \ref{mdp} and for every $\alpha<1$ one needs to construct a measure $\nu$ on $f(E)$ such that for every $x \in E$ and $r>0$ we have 
	\begin{align}\label{ineq:nu-general}
		\nu(B(f(y),r))\lesssim r^{\alpha}.
	\end{align}
	The key result in this direction is the following.
	
	%
	
	\begin{Lemma}\label{lemma:vertical-cantor}
		Let $E \in {E}$ be a vertical Cantor set. Then for any $\eta$-quasisymmetry $f: E \to I$ and any $0 < \alpha < 1$, there exists a probability measure $\nu$ on $I$ such that for any $n\geq 1$ and $j\in\{1,\ldots, \ell^n\}$ we have
		\begin{equation}\label{ineq:measure-bound}
			\nu(f(E_{n,j}\cap E)) \leq C \diam(f(E_{n,j}\cap E))^{\alpha}
		\end{equation}
		for some constant $0 < C < \infty$ depending on $K$, $\eta$, and $\alpha$.
	\end{Lemma}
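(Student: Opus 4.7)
The plan is to imitate the measure construction from Lemma \ref{mla}, adapted from a binary to an $\ell$-ary hierarchy. The vertical Cantor set $E$ carries a natural $\ell$-ary tree: each $E_{n-1,j}$ has exactly $\ell$ children (one generation-$n$ rectangle from each of its $\ell$ rows). Writing $I_{n,j} = f(E_{n,j}\cap E)$, $\tilde I_{n,j}$ for its parent, and $I^1,\ldots,I^\ell$ for the sibling cluster under a common parent, I would normalize $\diam(I)=1$ and define $\nu$ recursively by $\nu(I)=1$ and
\[
\nu(I_{n,j})\;=\;\frac{\diam(I_{n,j})^\alpha}{\sum_{k=1}^{\ell}\diam(I^k)^\alpha}\,\nu(\tilde I_{n,j}).
\]
As in Lemma \ref{mla}, the nested-set extension (\cite[Theorem~1.2]{Mal75}) produces a Borel probability measure, and telescoping yields
\[
\frac{\nu(I_{n,j})}{\diam(I_{n,j})^\alpha}\;=\;\prod_{i=1}^{n}\frac{\diam(\tilde I_i)^\alpha}{\sum_{k}\diam(I^k_i)^\alpha},
\]
so \eqref{ineq:measure-bound} reduces to bounding this product uniformly in $n$ and $j$.

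The geometric input comes from $\ell<m$ in the Bedford--McMullen construction. Since $\pi_y(E_{n,j}\cap E) = [(j-1)/\ell^n,\,j/\ell^n]$, we have $\diam(E_{n,j}\cap E)\geq\ell^{-n}$, while $\diam(\tilde E_{n,j})\leq\sqrt{2}\,\ell^{-(n-1)}$; thus sibling diameters are comparable with constants depending only on $\ell$. Two siblings share a parent rectangle of width $m^{-(n-1)}$, so
\[
\Delta(E^i_n,E^j_n)\;\lesssim\;\frac{m^{-(n-1)}}{\ell^{-n}}\;=\;\ell\bigl(\ell/m\bigr)^{n-1}\;\xrightarrow[n\to\infty]{}\;0.
\]
By Proposition \ref{diam} and Lemma \ref{relativedistance} the $f$-images inherit uniform versions: there exist $c=c(\eta,\ell)>0$ and $\epsilon_n\to 0$, independent of $j$, so that every ratio $p^k_n := \diam(I^k_n)/\diam(\tilde I_n)$ lies in $[c,1]$ and every pair of sibling images satisfies $\dist(I^i_n,I^j_n)/\diam(\tilde I_n)\leq\epsilon_n$.

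The crux is the inequality $\sum_{k=1}^{\ell}(p^k_n)^\alpha\geq 1$ for all sufficiently large $n$, which forces each factor of the telescoping product to be at most $1$ from some point onward. Picking the two siblings $I^a_n,I^b_n$ that realize $\diam(\tilde I_n)$ up to a bounded factor, the triangle inequality gives $p^a_n+p^b_n\geq 1-\epsilon_n$. Minimizing $p_a^\alpha+p_b^\alpha$ subject to $p_a+p_b\geq 1-\epsilon_n$ and $p_a,p_b\in[c,1]$ (concavity of $x^\alpha$ pushes the minimum to an extreme point) and bounding the remaining $\ell-2$ siblings below by $c$ yields
\[
\sum_{k=1}^{\ell}(p^k_n)^\alpha\;\geq\;(1-\epsilon_n-c)^\alpha+(\ell-1)\,c^\alpha.
\]
At $\epsilon_n=0$ the right-hand side equals $(1-c)^\alpha+(\ell-1)c^\alpha$, which strictly exceeds $1$ because $x^\alpha>x$ on $(0,1)$ for $\alpha<1$; by continuity the strict inequality persists for all small $\epsilon_n$. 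Consequently only finitely many factors of the telescoping product exceed $1$, each boundedly, giving a uniform constant $C=C(K,\eta,\alpha)$ in \eqref{ineq:measure-bound}.

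The main obstacle is the last step: the $\ell$-ary sharpening of the inequality $p^\alpha + (1-p)^\alpha(1-\epsilon)^\alpha\geq 1$ used in Lemma \ref{mla}. Its resolution crucially exploits both the strict positivity $c>0$ of the sibling diameter ratio (guaranteed by the bounded aspect ratio of Bedford--McMullen rectangles) and the strict concavity of $x^\alpha$ for $\alpha<1$, which together furnish the surplus needed to absorb the $O(\epsilon_n)$ defect arising from the fact that the $\ell$ siblings only approximately cover their parent in diameter.
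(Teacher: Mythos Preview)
Your overall strategy---the $\ell$-ary version of the measure from Lemma~\ref{mla}, the telescoping product, and the reduction to $\sum_k (p^k_n)^\alpha\geq 1$ for large $n$---is exactly the paper's approach. The gap is in your geometric claim that \emph{every} pair of sibling images satisfies $\dist(I^i_n,I^j_n)/\diam(\tilde I_n)\leq\epsilon_n$ with $\epsilon_n\to 0$. This is false for $\ell\geq 3$. In $E$ the siblings $E^1_n,\dots,E^\ell_n$ are stacked vertically with $\pi_y$-projections of length $\ell^{-n}$; for $|i-j|\geq 2$ one has $\dist(E^i_n,E^j_n)\geq (|i-j|-1)\ell^{-n}$, which is comparable to $\diam(\tilde E_n)\asymp \ell^{-(n-1)}$. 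The width $m^{-(n-1)}$ of the parent rectangle controls only the $x$-separation, not the $y$-separation, so your bound $\Delta(E^i_n,E^j_n)\lesssim m^{-(n-1)}/\ell^{-n}$ holds only for \emph{consecutive} siblings. Consequently the two siblings $I^a_n,I^b_n$ containing the extremal points of $\tilde I_n$ need not be close, and the inequality $p^a_n+p^b_n\geq 1-\epsilon_n$ is unjustified.

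The fix, which is what the paper does, is to chain all $\ell$ consecutive siblings: since $\dist(E^{k}_n,E^{k+1}_n)\lesssim m^{-(n-1)}$ for each $k$, Lemma~\ref{relativedistance} gives $\dist(J_{i,k},J_{i,k+1})<\epsilon\min\{\diam(J_{i,k}),\diam(J_{i,k+1})\}$ for $n$ large, and the triangle inequality along the chain yields $\diam(\tilde I_n)\leq (1+\epsilon)\sum_{k=1}^\ell \diam(J_{i,k})$. One then bounds $(\sum_k p^k)^\alpha/\sum_k (p^k)^\alpha$ using $c\leq p^k\leq 1$; normalizing by the largest $p^k$ gives a ratio at most $(1+c)^{-(1-\alpha)}<1$, which absorbs the $(1+\epsilon)^\alpha$ factor. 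Your concavity argument at the end is fine in spirit, but it needs the constraint $\sum_k p^k\geq 1-\epsilon$ (involving all $\ell$ siblings), not just $p^a+p^b\geq 1-\epsilon$.
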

	
	\begin{proof}
		%
		%
		Slightly abusing the notation, below we will write $E_{n,j}$ instead of $E_{n,j}\cap E$. Using this convention, from the construction of $E$   we have
		\begin{align}\label{selfsimilardist}
			\dist(E_{n,j},E_{n,j+1}) \lesssim 1/m^n,
		\end{align}
		if $E_{n,j}$ and $E_{n,j+1}$ have the same parent.
		
		Letting $I=f(E)$, $I_{n,j}=f(E_{n,j}\cap E)$, and $\tilde{I}_{n,j}=f(\tilde{E}_{n,j})$. We let $\nu(I)=1$ and by induction define 
		\begin{align}\label{equation:nu}
			{\nu(I_{n,j})} =\frac{\diam \left(I_{n,j}\right)^{\alpha}}{\sum_{\tiny{I_{n,j'}\subset \tilde{I}_{n,j}}} \diam\left( I_{n,j'} \right) ^{\alpha}} \nu(\tilde{I}_{n,j}),
		\end{align}
		where the sum in the denominator is over all the children of $\tilde{I}_{n,j}$. For every $n$ and $j\in\{1,\ldots,\ell^n\}$ there is a unique nested sequence of rectangles $\left\{ E_{i,j_i} \right\}$ such that
		\[
		E_{n,j}\subset E_{n-1,j_{n-1}} \subset \ldots E_{1,j_1} \subset E_{0,0} = [0,1]^2.
		\]
		
		We denote $I_i=f(E_{i,j_i}\cap E)$ for $i=0,\ldots,n$ and let 
		$$\mathscr{C}(I_{i-1})=\{J_{i,1},\ldots,J_{i,\ell}\}$$ be the collection of all children of $I_{i-1}$ for $i\geq 1$. We order the children of $I_{i-1}$ in the ascending order according to the $\pi_y$ projection of the corresponding rectangles $E_{i,j}$. Using equation (\ref{equation:nu}), by induction we have
		\begin{align}\label{measureproduct}
			\frac{\nu(I_{n})}{\diam\left(I_{n}\right)^{\alpha}} = \prod_{i=1}^{n}\frac{\diam\left(I_{i-1} \right)^{\alpha}}{\sum_{j'=1}^{\ell} \diam\left(J_{i,j'}\right)^{\alpha}}.
		\end{align}
		
		It is sufficient to show that the right hand side of equation \eqref{measureproduct} is uniformly bounded. We next prove that for any fixed $\alpha<1$, there exists $N=N(\alpha,\eta)$ and $\delta<1$ such that for any $n>N$ we have
		\begin{align}\label{ineq:vert2}
			\frac{\diam\left(I_{i-1} \right)^{\alpha}}{\sum_{j'=1}^{\ell} \diam\left(J_{i,j'}\right)^{\alpha}} <\delta<1.
		\end{align}
		
		Note that by Lemma \ref{relativedistance} and inequality \eqref{selfsimilardist}, for every $\eps>0$ we  can find $N_1=N_1(\eps,\eta)\in\mathbb{N}$ such that for every $j'\in\{1,\ldots \ell\}$ we have
		\[
		\dist(J_{i,j'},J_{i,j'+1})<\eps\min\left\{ \diam\left(J_{i,j'}\right),\diam\left(J_{i,j'+1}\right) \right\}.
		\]
		Therefore,
		\begin{align*}
			\diam\left(I_{i-1}\right) &\leq \sum_{j'=1}^{\ell-1} \left[ \diam\left(J_{i,j'}\right)+\dist(J_{i,j'},J_{i,j'+1})  \right]+\diam\left(J_{i,\ell}\right)
			\\
			& \leq (1+\eps) \sum_{j'=1}^{\ell} \diam\left(J_{i,j'}\right)
		\end{align*}
		and 
		\begin{align}\label{ineq:Jensen-type}
			\frac{\diam\left(I_{i-1}\right)^{\alpha}}{\sum_{j'=1}^{\ell} \diam\left(J_{i,j'}\right)^{\alpha}}\leq (1+\eps)^{\alpha}	\frac{\left(\sum \diam\left(J_{i,j'}\right)\right)^\alpha}{\sum \diam\left(J_{i,j'}\right)^{\alpha}}.
		\end{align} 
		
		To estimate the right hand side from above, assume, without loss of generality, that $\diam(J_{i,\ell})\geq \diam(J_{i,j})$ for all $j\in\{1\ldots,\ell-1\}$. Moreover, from the construction and (\ref{ineq:rel-dist-distortion}) it follows that there is a constant $c=c(\ell,\eta)\in(0,1)$, such that $\diam(J_{i,j})\geq c\diam (J_{i,\ell})$. 
		
		Letting $x_j=\frac{\diam (J_{i,j})}{\diam(J_{i,\ell})}$, 
		we have $c\leq x_j \leq 1$ for all $1\leq j\leq \ell$.  Therefore, since $\alpha<1$, the quotient in the right hand side of (\ref{ineq:Jensen-type}) is bounded from above by 
		\begin{align*} \frac{(x_1+\ldots+x_{\ell-1}+1)^{\alpha}}{x_1^{\alpha}+\ldots+x_{\ell-1}^{\alpha}+1}
			&\leq \frac{(x_1+\ldots+x_{\ell-1}+1)^{\alpha}}{x_1+\ldots+x_{\ell-1}+1}\\
			&\leq\frac{1}{(1+c)^{1-\alpha}}<1.
		\end{align*}
		
		%
		%
		%
		
		Choosing $\eps_0=(\delta(1+c)^{1-\alpha})^{1/\alpha}-1$ and combining the above estimates we conclude that for $n>N_1(\eps_0,\eta)$ we have (\ref{ineq:vert2}) which completes the proof.
	\end{proof}
	
	To complete the proof of (\ref{ineq:nu-general}) we first note  that $E$ is flat. 
	%
	%
	Indeed, for disjoint $E_{n,j}$ and $E_{m,l}$, we have
	\[
	\diam\left(E_{n,j} \cup E_{m,l}\right) \geq \diam\left(\pi_y\left(E_{n,j}\right)\right) + \diam\left(\pi_y\left(E_{m,l}\right)\right).
	\]
	Then the construction of $E$ and the disjointedness of $E_{n,j}$ imply that for $x\in E$ the flatness constant, $A(x)$, is no greater than  two.
	
	Finally, using the flatness,  arguing  as in the proof of Theorem \ref{cd1}, and observing that $\dim_H(E) =1$ we obtain that (\ref{ineq:nu-general}) holds for every $B(x,r)\subset E$. Therefore we have the following. 
	
	\begin{Lemma}\label{lemma:vertical-cantor1}
		$\dim_C(E)= 1$ for every  $E\in\mathcal{E}$.
	\end{Lemma}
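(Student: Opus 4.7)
My plan is to prove $\dim_C(E) = 1$ by establishing the two inequalities separately. The upper bound $\dim_C(E) \leq 1$ will come from a direct computation of $\dim_H E$, while the lower bound $\dim_C(E) \geq 1$ will combine Lemma \ref{lemma:vertical-cantor} with the flatness of $E$ and invoke the argument of Theorem \ref{cd1}.

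For the upper bound, I would cover $E$ by the $\ell^n$ rectangles $E_{n,j}$ of generation $n$, each of size $m^{-n} \times \ell^{-n}$ and diameter at most $\sqrt{2}\,\ell^{-n}$. Since $\sum_{j=1}^{\ell^n} \diam(E_{n,j}) \leq \sqrt{2}$ for every $n$, one has $\mathcal{H}^1(E) < \infty$, hence $\dim_H E \leq 1$. The trivial bound $\dim_C E \leq \dim_H E$ then gives the upper estimate.

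For the lower bound, let $f: E \to I$ be any $\eta$-quasisymmetry and fix $\alpha < 1$. Lemma \ref{lemma:vertical-cantor} produces a probability measure $\nu$ on $I$ with $\nu(f(E_{n,j})) \leq C \diam(f(E_{n,j}))^\alpha$ for every cell. To convert this cell-wise control into the ball-wise control required by the Mass Distribution Principle, I would verify the three structural hypotheses used in Theorem \ref{cd1} (read here for an $\ell$-ary hierarchy): flatness with constant at most $2$, which holds because disjoint cells have disjoint vertical projections and thus $\diam(E_{n,j} \cup E_{m,l}) \geq \diam(\pi_y(E_{n,j})) + \diam(\pi_y(E_{m,l}))$; the sibling diameter comparability \eqref{cdiam}, which is immediate since siblings share their vertical projection and have comparable widths; and the relative-distance decay \eqref{rdist}, which follows from the self-similar estimate \eqref{selfsimilardist}. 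With these in place, the argument of Theorem \ref{cd1} applies verbatim and delivers $\nu(B(f(x),r) \cap I) \lesssim r^\alpha$ on a stratified subset of positive $\nu$-mass, so Theorem \ref{mdp} yields $\dim_H f(E) \geq \alpha$. Letting $\alpha \uparrow 1$ and taking the infimum over $f$ completes the argument.

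I do not anticipate a significant obstacle: the technical weight has already been moved into Lemma \ref{lemma:vertical-cantor}, and the only mild subtlety, namely that $E$ is $\ell$-ary rather than binary, is absorbed into the normalizing sum $\sum_{j'=1}^\ell \diam(J_{i,j'})^\alpha$ used in the definition of $\nu$. The only bookkeeping point worth noting is that the cell-to-ball transfer needs the quasisymmetric comparability between the diameter of $f(E_{n,j})$ and that of the circumscribed ball in $I$, which is a standard consequence of Proposition \ref{diam} applied to sibling cells of comparable Euclidean diameter.
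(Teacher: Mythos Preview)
Your proposal is correct and takes essentially the same approach as the paper: the lower bound combines Lemma~\ref{lemma:vertical-cantor} with the flatness of $E$ (established via the same projection inequality $\diam(E_{n,j}\cup E_{m,l})\ge \diam(\pi_y(E_{n,j}))+\diam(\pi_y(E_{m,l}))$ and yielding $A(x)\le 2$) and then invokes the argument of Theorem~\ref{cd1} to pass from cell-wise to ball-wise control; the upper bound is the observation $\dim_H E = 1$, for which you supply the explicit covering computation that the paper only states. Your remark that the $\ell$-ary hierarchy is absorbed into the normalizing sum in the definition of $\nu$ is exactly the point, and the uniformity of the constants in Lemma~\ref{lemma:vertical-cantor} (depending on $K,\eta,\alpha$ but not on $x$) actually makes the stratification step in Theorem~\ref{cd1} trivial here.
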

	
	\subsection{Conclusion of the proof} With $\mu$ and $\mathbf{E}$ constructed as above, we would like to show that $\Mod_1(\mathbf{E})>0$. For that let $\rho:K\to[0,\infty)$ be admissible for $\mathbf{E}$, that is $\int_E \rho d \lambda_E \geq 1$ for every $E\in\mathcal{E}$. Since $\mu$ is a Radon measure as explained above, by the Vitali-Carath\'eodory theorem \ref{VCT} we can assume that $\rho$ is in fact lower-semicontinuous.
	We will need the following result.
	\begin{Lemma}\label{lemma:construction}
		For every lower-semicontinuous function $\rho:K\to[0,\infty)$ there is a Borel function $\rho_\infty:K\to[0,\infty)$ such that
		\begin{enumerate}
			\item $\int \rho d\mu \geq \int \rho_\infty d \mu$,\label{smeasure}
			
			\item  $\rho_\infty(x_1,y_1)=\rho_\infty(x_2,y_2)$ if $y_1=y_2$,\label{emeasure}
			
			\item $\exists F\in\mathcal{E}$ s.t. $\rho(x)\leq \rho_\infty(x)$ for $x\in F$.\label{admissiblemeasure}
		\end{enumerate}
	\end{Lemma}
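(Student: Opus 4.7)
The plan is to build $F$ greedily so that the $\rho$-mass concentrated on $F$ is as small as possible, and then set $\rho_\infty$ to be the $\liminf$ of horizontal averages of $\rho$ along the generation-$n$ rectangles of $F$. Assume without loss of generality that $\int_K \rho\, d\mu < \infty$, since otherwise (1) is trivial. Construct $F$ inductively: having chosen the first $n$ generations of $F$ as a collection $F_n \subset \mathcal{K}_n$ of $\ell^n$ rectangles, inside each $E \in F_n$ partition the $k\ell$ retained children of $E$ into $\ell$ rows of $k$ rectangles each, and pick the child in each row that minimizes $\int \rho\, d\mu$ (ties broken by any fixed rule). This defines $F_{n+1}$, and $F := \bigcap_n \bigcup_{E \in F_n} E$ lies in $\mathcal{E}$. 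For each generation-$n$ $\ell$-adic interval $J \subset [0,1]$, let $E_{n,J}$ denote the unique element of $F_n$ with $\pi_y(E_{n,J}) = J$, and define
\[
\rho_n(x,y) \;=\; \intbar_{E_{n,J} \cap K} \rho \, d\mu \qquad \text{whenever } y \in J.
\]
Each $\rho_n$ is a Borel function depending only on the $y$-coordinate.

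For (1), the key inductive estimate is
\[
\sum_{E \in F_n} \int_E \rho \, d\mu \;\leq\; k^{-n} \int_K \rho \, d\mu,
\]
which follows because in every row of every parent the chosen minimizer is at most the average of the $k$ candidates. Using $\pi_y^*(\mu) = \mathcal{L}^1$, so that $\mu(\pi_y^{-1}(J)) = \ell^{-n}$, together with $\mu(E_{n,J} \cap K) = (k\ell)^{-n}$, a direct computation gives $\int_K \rho_n \, d\mu = k^n \sum_{E \in F_n} \int_E \rho\, d\mu \leq \int_K \rho\, d\mu$. Set $\rho_\infty := \liminf_{n \to \infty} \rho_n$; this is Borel and depends only on $y$, so (2) holds, and Fatou's lemma yields $\int \rho_\infty \, d\mu \leq \liminf_n \int \rho_n \, d\mu \leq \int \rho\, d\mu$, giving (1).

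For (3), fix $(x_0, y_0) \in F$ and $\epsilon > 0$. Lower-semicontinuity of $\rho$ provides $\delta > 0$ such that $\rho > \rho(x_0, y_0) - \epsilon$ on $B((x_0, y_0), \delta) \cap K$. Let $E^{(n)}$ be the unique element of $F_n$ containing $(x_0, y_0)$; since $\diam(E^{(n)}) \to 0$, the ball eventually contains $E^{(n)} \cap K$, so $\rho_n(y_0) \geq \rho(x_0, y_0) - \epsilon$ for all large $n$, and therefore $\rho_\infty(y_0) \geq \rho(x_0, y_0) - \epsilon$. Letting $\epsilon \to 0$ gives $\rho_\infty \geq \rho$ on $F$. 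The main subtle point is precisely this last step: greedy selection alone only compares integrals and would produce $\rho_\infty \geq \rho$ in some averaged sense, so the pointwise bound at every point of $F$ relies critically on the LSC hypothesis, which is exactly why the Vitali--Carath\'eodory theorem is invoked before appealing to this lemma.
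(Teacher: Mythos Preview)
Your proof is correct and follows essentially the same approach as the paper: the greedy construction of $F$, the definition of $\rho_n$ as the horizontal propagation of averages over the chosen rectangles, and $\rho_\infty := \liminf_n \rho_n$ are all identical. The only cosmetic differences are that for (1) the paper argues $\int \rho_n\,d\mu$ is monotone decreasing whereas you bound each term directly via the inductive inequality $\sum_{E\in F_n}\int_E\rho\,d\mu\le k^{-n}\int_K\rho\,d\mu$, and for (3) the paper picks a point $z_n'\in R_n(z)$ with $\rho(z_n')$ below the average and then invokes lower-semicontinuity, while you invoke lower-semicontinuity first to bound $\rho$ from below on the whole shrinking rectangle---both arguments are equivalent.
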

	Before proving Lemma \ref{lemma:construction} we observe that it implies the inequality $\Mod_{1}(\mathbf{E})>1$. This would complete the proof of Theorem \ref{thm:uniformfibers}, provided we verify all the other conditions of Theorem \ref{fmodest}. Note that in Subsections \ref{section:mu}, \ref{section:vertical}, and \ref{vertical:minimal} it was proved that $\mu$ satisfies the upper mass bound (\ref{um}) with $q=1+d$, the sets $E\in\mathcal{E}$ are minimal, and the measures $\lambda_E$ satisfy (\refeq{linear}) with $s=1$. Therefore, the only condition that is left to check is that $\mu$ is a doubling measure. To see this, we consider {approximate squares}, see also \cite{McMullen} or \cite{BP17}. 
	
	Let $\alpha=\frac{\log \ell}{\log m}<1$ and define the collection of \emph{approximate squares} of generation $n$ as follows
	\[
	\mathcal{Q}_n=\left\{\left[\frac{j}{\ell^n},\frac{j+1}{\ell^n}\right] \times \left[\frac{k}{m^{\lfloor \alpha n \rfloor}},\frac{k+1}{m^{\lfloor \alpha n \rfloor}}\right]: j,k \in\mathbb{Z}\right\}.
	\]
	
	Note, that each $Q\in\mathcal{Q}_n$ is a rectangle of height $\ell^{-k}$ and width $m^{-\lfloor\alpha k \rfloor}\asymp \ell^{-k}$. 
	
	Observe, that if $Q \in\mathcal{Q}_n$ then either $\mathrm{int}(Q)\cap K=\emptyset$ and $\mu(Q)=0$ or $\mathrm{int}(Q)\cap K\neq\emptyset$ in which case $\mu (Q)=c_n$, for some $c_n>0$ which is independent of $Q$. Therefore, if $\mathrm{int}(Q)\cap K\neq\emptyset$ then denoting by $\tilde{Q}$ the collection of all the  approximate squares $Q'\in\mathcal{Q}_n$ such that $Q'\cap Q \neq \emptyset$ (including $Q$ itself) we obtain $\mu(\tilde{Q}) \leq 9c_n= 9\mu(Q)$. It is then quite standard to check that $\mu$ is in fact a doubling measure.
	
	\begin{proof}[Proof of Theorem \ref{thm:uniformfibers}]
		
		Note that by \eqref{emeasure} there exists a unique $\bar{\rho}: [0,1] \to \mathbb{R}$ s.t.  for $\mathcal{L}^1$-a.e. $a \in [0,1]$ we have $\bar{\rho}(a) = \rho_\infty(z)$, whenever $\pi_y(z) = a$. Then, by \eqref{admissiblemeasure}, for any $E \in \mathcal{E}$ we have
		\[
		\int_E \rho_\infty(z) \ d \lambda_E = \int_0^1 \bar{\rho}(a) \ da =  \int_F \rho_\infty(x) \ d \lambda \geq \int_F \rho(x) \ d \lambda \geq 1.
		\]
		Thus $\rho_\infty$ is also admissible for $\mathcal{E}$. Finally, using \eqref{smeasure}, the Disintegration Theorem \cite[Theorem $5.3.1$]{AGS05},  and the inequalities above, we obtain
		\begin{align*}
			\int_K \rho d\mu \geq \int_K \rho_\infty d\mu
			=  \int_0^1 \left[ \int_{\pi_y^{-1}(a)}\rho_\infty(z) \ d \mu_a(z) \right] \ da
			&=  \int_0^1 \bar{\rho}(a) \mu_a(K \cap Z_a)\ da
			\\
			&  =\int_0^1 \bar{\rho}(a) \ da
			\geq 1,
		\end{align*}
		where the last equality holds since $\mu_a\left(K \cap Z_a\right)=1$ for almost every  $a \in [0,1]$.
	\end{proof}
	
	\begin{proof}[Proof of Lemma \ref{lemma:construction}]
		To construct the function $\rho_\infty$  we will first construct a sequence of piecewise constant functions $\rho_n$ from $\rho$ as follows.
		
		From each of the $\ell$ rows of $\mathcal{K}_1$ choose one rectangle with the smallest average of $\rho$.  Specifically,  choose rectangles $R_{1,1},\ldots,R_{1,\ell}\in\mathcal{K}_1$ so that if $R\in\mathcal{K}_1$ is such that $\pi_y(R)=\pi_y (R_{1,j})$ for some $j$, then
		\begin{align*}
			\intbar_{R_{1,j}} \rho d\mu \leq \intbar_{R} \rho d\mu.
		\end{align*}
		Define $\rho_1:K_1\to[0,\infty]$ by
		\begin{align*}
			\rho_1\big\rvert_{R} \equiv \intbar_{R_{1,j}}\rho d\mu,   \mbox{ if } \pi_y(R)=\pi_y (R_{1,j}).
		\end{align*}
		That is $\rho_1$ is the same constant on every first generation rectangle at the same height.  Hence,  $\rho_1$ is also constant on almost every fiber $\pi^{-1}_y(a)$, and $\int_K \rho d\mu \geq \int_K \rho_1 d\mu$.
		Continuing by induction, suppose that at step $n-1$ we have chosen rectangles $R_{n-1,1},\ldots,R_{n-1,\ell^{n-1}}$ and defined $\rho_{n-1}$ so that it is constant on all rectangles of generation $n-1$ in $\mathcal{K}_{n-1}$ and the constants are the same on two rectangles with the same heights.
		
		\begin{figure}[htbp]
			\centering
			\includegraphics[height= 1.2 in]{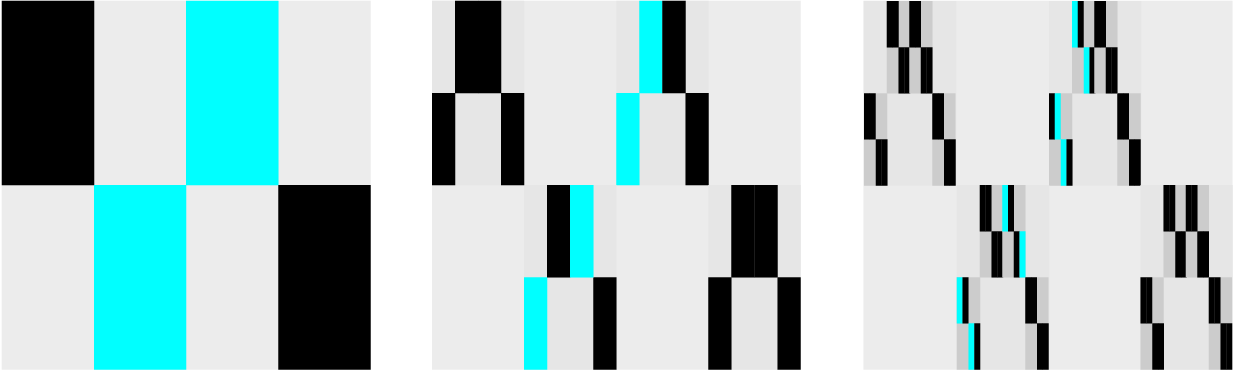}
			\caption{The first three generations of $F$ on $I$.}
			\label{ME}
		\end{figure}
		
		To construct $\rho_{n}$,   from each of the  $\ell$ rows of generation $n$ rectangles within $R_{n-1,j}$ for  $j\in\{1,\ldots,\ell^{n-1}\}$  chosen above, we will select one with the smallest average of $\rho$.  That is,  the rectangles $R_{n,j'}$ with $j'\in\{1,\ldots,\ell^{n}\}$ are such that
		\begin{align*}
			\intbar_{R_{n,j'}} \rho d\mu \leq \intbar_{R} \rho d\mu,
		\end{align*}
		whenever $R, R_{n,j'}\subset R_{n-1,j}$ for some $j\in\{1,\ldots,\ell^{n-1}\}$ and $\pi_y(R)=\pi_y (R_{n,j'})$.
		
		Define $\rho_n$ be setting
		\begin{align*}
			\rho_n\big\rvert_{R} \equiv \intbar_{R_{n,j}}\rho d\mu,   \mbox{ if } \pi_y(R)=\pi_y (R_{n,j}).
		\end{align*}
		
		Then we have a sequence of Borel measurable functions $\{\rho_n \}$.  It follows from the construction above  that we have
		\[
		\int_K \rho \ d\mu \geq \int_K \rho_1 \ d\mu \geq \ldots\geq
		\int_K \rho_n \ d\mu\geq \int_K \rho_{n+1} \ d\mu\geq \ldots.
		\]
		
		For every $z=(x,y)\in K$ we define
		\begin{align}\label{def:trho}
			\rho_\infty(z) := \liminf_{n\to\infty} \rho_n(z).
		\end{align}
		
		Condition \eqref{smeasure} then follows from the definition of $\rho_\infty$ and Fatou's Lemma.  Equality \eqref{emeasure} follows from (\ref{def:trho}) and the fact that for every $n$ the function $\rho_n$ is constant on almost all (in fact all, except for a finitely many) horizontal slices $K\cap Z_a$.
		
		To prove \eqref{admissiblemeasure} define $F=\bigcap_{n=1}^{\infty}\bigcup_{j=1}^{\ell^n} R_{n,j}$.
		
		For every $z\in F$ choose a rectangle $R_{n}(z)\in\mathcal{K}_n$ such that $z\in R_n(z)$ for all $n\geq 1$.  Pick $z_{n}'\in R_{n}(z)$ so that \begin{align}\label{trho-adm}
			\rho(z_{n}')\leq \intbar_{R_{n}(z)} \rho  d\mu = \rho_n(z_{n}')=\rho_n(z).
		\end{align}
		Note that $z_n'\to z$ as $n\to\infty$, since  $\diam R_n(z) \asymp \ell^{-n}$.
		Recalling that $\rho$ is lower semicontinuous, we  also have that $\rho(z) \leq \liminf _{z'\to z}\rho(z')$ for any $z \in K$. Therefore,  from (\ref{def:trho}) and (\ref{trho-adm}) it follows that for every $z \in F$ we have
		\begin{align*}
			\rho(z) & \leq \liminf_{n\to\infty} \rho(z_n') \leq \liminf_{n\to\infty} \rho_n (z)=\tilde\rho(z),
		\end{align*}
		This proves (3) since $F$ is a vertical Cantor set in $K$ by definition.
	\end{proof}
	
	\subsection{Minimality of continuous self-affine graphs}
	In this section we provide examples of continuous functions which are minimal for conformal dimension and prove Corollary \ref{thm:mg}. These examples fall into the class of spaces described in Remark \ref{remark:different-patterns} and the proof of minimality for these more general sets is exactly the same as that of Theorem \ref{thm:uniformfibers} above.  
	
	\begin{Example}\label{example-sa-graph}
		Following \cite[p. 156]{BP17} we construct a graph of a self-affine continuous function as follows.  Define matrices $A_j$, $j\in\{1,2,3\}$,  as follows:
	\end{Example}
	\begin{equation*}
		A_1 = 
		\left(
		\begin{array}{cccc} 
			0 & 2 & 3 & 0\\ 
			2 & 0 & 0 & 3
		\end{array}
		\right),
		\quad 
		A_2 = 
		\left(
		\begin{array}{cccc} 
			0 & 0 & 1 & 2\\ 
			1 & 2 & 0 & 0
		\end{array}
		\right),
		\quad 
		A_3 = 
		\left(
		\begin{array}{cccc} 
			3 & 1 & 0 & 0\\ 
			0 & 0 & 3 & 1
		\end{array}
		\right).
	\end{equation*}
	
	Start with the unit square $[0,1]^2$. Divide it into $8$ congruent $1/4 \times 1/2$ rectangles and keep the ones corresponding to the non-zero entries in the matrix $A_1$. Replace the rectangles corresponding to the label $j\in\{1,2,3\}$ by a pattern of  rectangles corresponding to $A_j$. It is easy to see, and is left to the reader, that the resulting set $\Gamma$ is a graph of a continuous function See Fig. \ref{sa-graph} for an illustration of $\Gamma$ and also see \cite[Exercise 5.40]{BP17}. Moreover, by Proposition \ref{prop:dim} and Theorem \ref{thm:uniformfibers}, which are applicable here as explained in Remark \ref{remark:different-patterns}, we have
	\[
	\dim_H \Gamma=\dim_C \Gamma = 1+\log_4 2 = 3/2.
	\]
	
	\begin{figure}[htbp]
		\centering
		\includegraphics[height = 2 in]{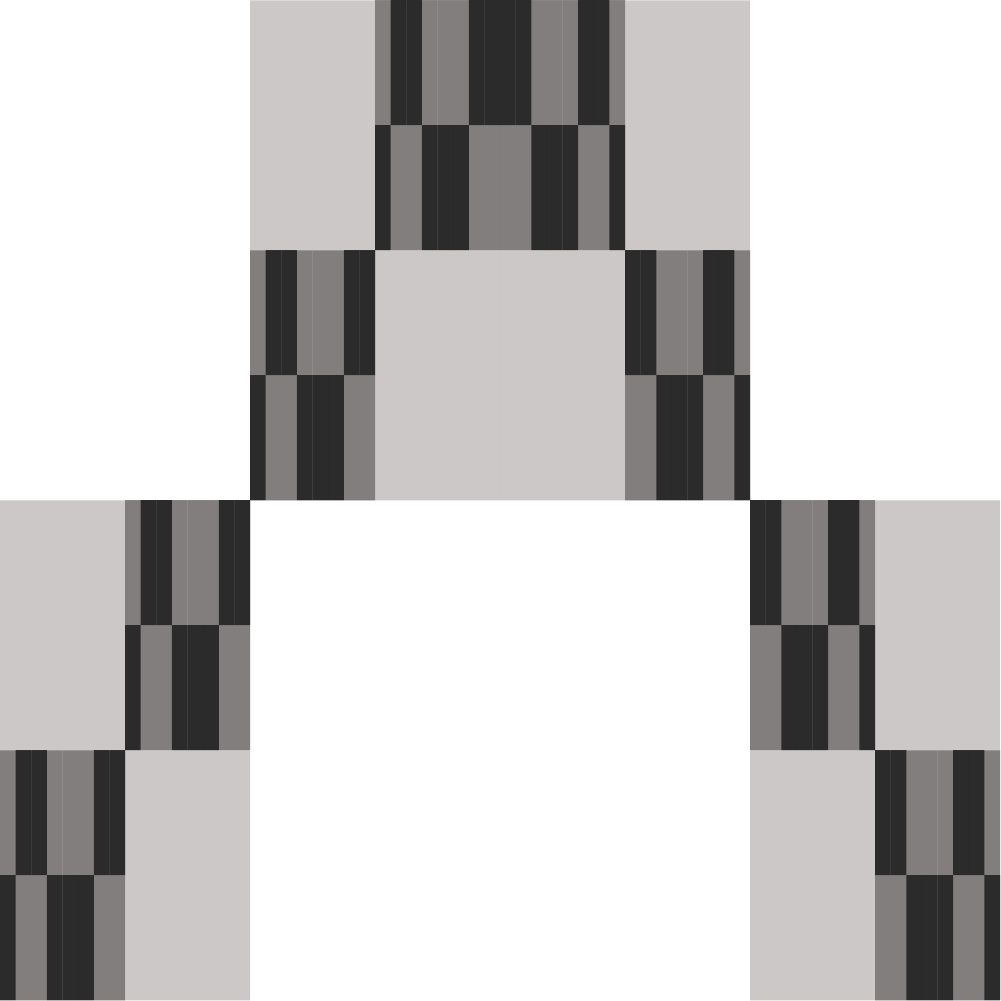}\quad
		\includegraphics[height = 2 in]{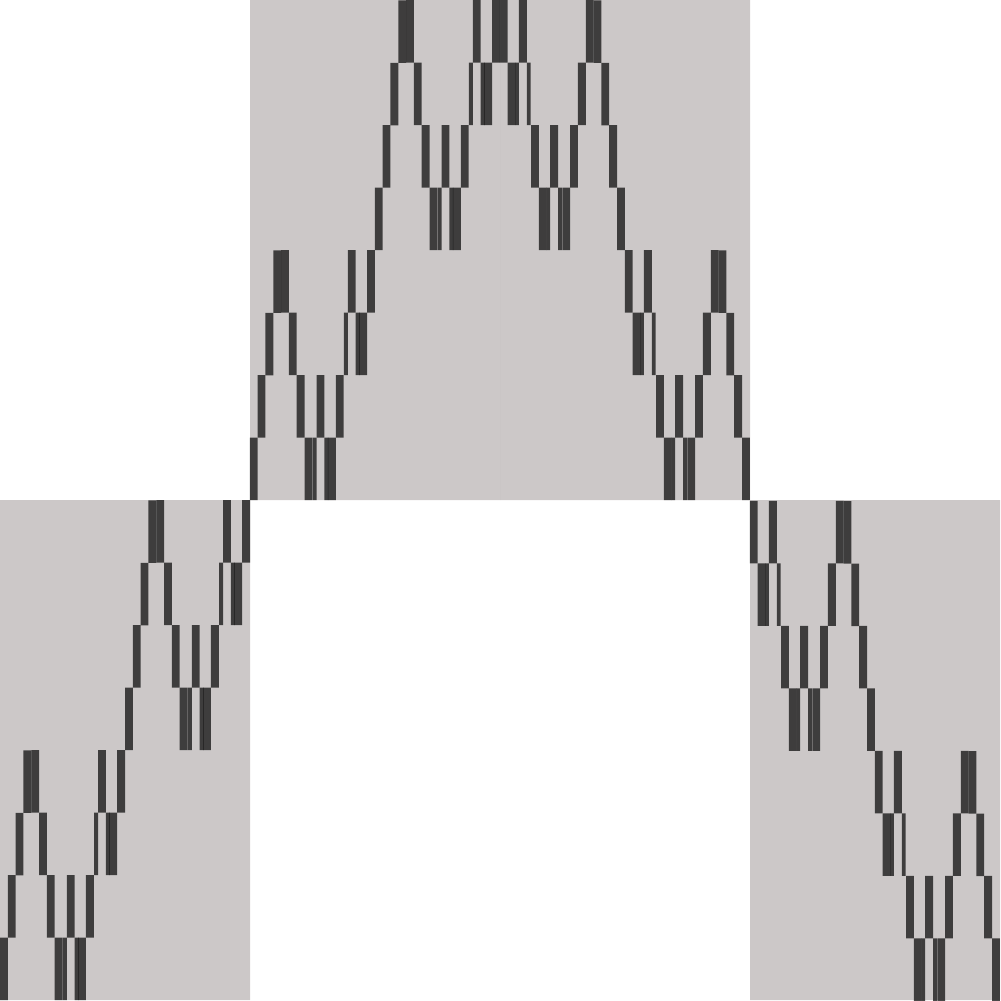}
		\caption{Approximations to the minimal self-affine graph $\Gamma$ in Example \ref{example-sa-graph}. In this example one has  $\dim_H\Gamma=\dim_C \Gamma =3/2$.}
		\label{sa-graph}
	\end{figure}
	
	It is not hard to generalize the previous example to obtain graphs of continuous functions which are minimal and have various conformal dimensions. Before giving the general procedure we provide one more motivating example that is helpful for understanding the proof of Corollary \ref{thm:mg}. Define
	\begin{align*}
		A_1 &= 
		\left(
		\begin{array}{cccccccccccc} 
			0 & 0 & 2 & 3 & 1 & 1 & 0 & 0 & 0 & 0 & 0 & 0 \\
			0 & 2 & 0 & 0 & 0 & 0 & 3 & 1 & 1 & 0 & 0 & 0  \\
			2 & 0 & 0 & 0 & 0 & 0 & 0 & 0 & 0 & 3 & 1 & 1  
		\end{array}
		\right),
		\\
		A_2 &= 
		\left(
		\begin{array}{cccccccccccccccc} 
			0 & 0 & 0 & 0 & 0 & 0 & 0 & 0 & 1 & 1 & 1 & 2  \\
			0 & 0 & 0 & 0 & 1 & 1 & 1 & 2 & 0 & 0 & 0 & 0  \\
			1 & 1 & 1 & 2 & 0 & 0 & 0 & 0 & 0 & 0 & 0 & 0 
		\end{array}
		\right),
		\\
		A_3 &= 
		\left(
		\begin{array}{cccccccccccccccc} 
			3 & 1 & 1 & 1 & 0 & 0 & 0 & 0 & 0 & 0 & 0 & 0  \\ 
			0 & 0 & 0 & 0 & 3 & 1 & 1 & 1 & 0 & 0 & 0 & 0  \\
			0 & 0 & 0 & 0 & 0 & 0 & 0 & 0 & 3 & 1 & 1 & 1  
		\end{array}
		\right).
	\end{align*}	
	
	Again, the set constructed using these matrices is easily seen to be a graph of continuous function. Moreover, this graph is minimal and its Hausdorff dimension is equal to $1+\log_{12} 4$, where $4$ is the number of nonzero entries in each row and $12$ is the total number of entries in every row of these matrices.
	
	\begin{proof}[Proof of Corollary \ref{thm:mg}]
		Let $k$ and $\ell$ be positive integers and $m=k\ell$. We define $\ell \times m$ matrices $A_1,A_2,A_3$ as follows
		
		\begin{align*}
			A_1 &= 
			\left[
			\begin{array}{cccccccccccc} 
				0 & 0 &  \cdots & 2 & D_{k-1} & 0 & \ldots & 0  \\
				\vdots & \vdots & \adots& \vdots & D_{k-1} & 0 & \ldots & 0 \\
				0 & 2 & \cdots & 0 & \vdots & \vdots & \ddots & \vdots  \\
				2 & 0 & \cdots  & 0 & 0 & 0 & \cdots & D_{k-1}  
			\end{array}
			\right],
		\end{align*}
		\begin{align*}
			A_2 &= 
			\left[
			\begin{array}{cccc} 
				0 & 0 & \cdots & C_k  \\
				\vdots & \vdots & \adots & \vdots\\
				0 & C_k & \cdots & 0   \\
				C_k & 0 & \cdots & 0   
			\end{array}
			\right],
			\quad
			A_3 = 
			\left[
			\begin{array}{cccc} 
				D_k & 0 & \cdots & 0  \\
				0 & D_k & \cdots & 0   \\
				\vdots & \vdots & \ddots & \vdots\\
				0 & 0 & \ldots & D_k   
			\end{array}
			\right],
		\end{align*}	
		where $C_k$ and $D_k$ are the blocks of length $k$ of the form 	
		\begin{align*}
			C_k&=
			\left[
			\begin{array}{ccccc} 
				1 & 1 & \cdots & 1 & 2 
			\end{array}
			\right],
			\\
			D_k&=
			\left[
			\begin{array}{ccccc}
				3 & 1 & \cdots & 1 & 1\
			\end{array}
			\right],
		\end{align*}
		respectively, while $0$ represents a block of $0$'s of appropriate length (which may vary from one occurrence to another). Using this choice of matrices $A_j$ we can define the corresponding subset of the square $[0,1]^2$, which is easily seen to be  the graph of a continuous function. Moreover, by Remark \ref{remark:different-patterns}, this set, which we will denote by $K$, will be minimal for conformal dimension, and 
		\begin{align}
			\dim_H K = 1+\log_{m}k = 1+\log_{k\ell}k.
		\end{align}
		
		Letting $k=2^r$ and $\ell=2^s$ we obtain $\dim_H K=1+\frac{r}{r+s}$, where $1\leq r <s$ are arbitrary integers. Hence, the values of $\dim_H K$ are dense  in $[1,2]$.  Therefore, for any $d\in[1,2]$, there is a sequence of minimal sets $K_i'$ constructed as above so that $\dim_H K_i'=d_i$,  where $d_i\nearrow d$ as $i \to \infty$.   Moreover,   each $K'_i$ is a graph of a \emph{continuous} function $f_i$ such that $f_i(0)=f_i(1)=0$. Then,  denoting $aK+b=\{az+b : z \in K\}\subset\mathbb{C}$, for $a,b\in\mathbb{R}$, where $az=a(x+iy) = ax+iay\in\mathbb{C}$, the set
		\[
		\bigcup_{i=1}^{\infty}\left(\frac{1}{2^{i}}K_i'+1-\frac{1}{2^{i-1}}\right)
		\]
		will be a graph of a continuous function, and the conformal dimension of that set will be $\sup_{i}\dim_C K_i'=\lim_i d_i=d$.
	\end{proof}
	

	\section{Brownian motion and local time}\label{BM}
	
	In this section, we recall some standard properties of the linear Brownian motion and the associate local time. Moreover, we establish a few auxiliary results that are used in the proof of our main Theorem \ref{minimalgraph}.
	
	We  denote the $1$-dimensional or linear Brownian motion defined on a probability space $\left( \Omega, \mathcal{F}, \mathbb{P}\right)$ by $W\!(t, \omega)$, where $\omega\in\Omega$, or  by $W\!(t)$ if the probability space is clear from the context. We call $W\!(t, \omega)$ a \emph{standard Brownian motion} if $W\!(0,  \omega) = 0$ for any $\omega \in \Omega$. See \cite[Chapters $1$ and $2$]{MP10} for the precise definitions and basic properties of the Brownian motion.
	
	Let $\{\mathcal{F}(t): t\geq 0\}$ be a family  of $\sigma$-algebras such that $\mathcal{F}(t)$ is generated by the random variables $W\!(s)$, for $0 \leq s \leq t$. A random variable $T$ with values in $[0, \infty]$ is called a \emph{stopping time} with respect to $\{\mathcal{F}(t): t\geq 0\}$ if $\{T \leq t\} \in \mathcal{F}(t)$ for every $t \geq 0$. We define  $\mathcal{F}^+(s) = \bigcap_{t>s} \mathcal{F}(t)$. If $T$ is a stopping time, define
	\[
	\mathcal{F}^+(T) = \left\{ A \in \mathcal{F}: A \cap \{T \leq t\} \in \mathcal{F}^+(t) \ \textnormal{for all} \ t \geq 0\right\}.
	\]
	
	One of the most important properties of $W\!(t)$ is the strong Markov property. See \cite[Theorem $2.16$]{MP10} or \cite[Theorem $2.20$ and $6.17$]{LeG16} for a general version.
	
	\begin{Theorem}[Strong Markov Property]\label{Strong Markov}
		For every almost surely finite stopping time $T$,
		\[
		\{W\!(T + t) - W\!(T): t \geq 0\}
		\]
		is a standard Brownian motion independent of $\mathcal{F}^+(T)$.
	\end{Theorem}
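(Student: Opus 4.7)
The plan is to reduce the statement to the ordinary Markov property at deterministic times by approximating $T$ from above by discrete stopping times, then to exploit continuity of Brownian paths to pass to the limit. This is the classical Dynkin--Hunt scheme and is essentially what is done in \cite[Theorem $2.16$]{MP10}; I sketch how I would organize it.

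First I would handle the case where $T$ takes values in a countable set $\{t_k\}_{k\geq 1}\cup\{\infty\}$. Fix bounded continuous functionals $F$ on $C([0,\infty))$ and $A\in\mathcal{F}^+(T)$. Then
\[
\mathbb{E}\bigl[F(W(T+\cdot)-W(T))\mathbf{1}_A\mathbf{1}_{\{T<\infty\}}\bigr]
=\sum_{k}\mathbb{E}\bigl[F(W(t_k+\cdot)-W(t_k))\mathbf{1}_{A\cap\{T=t_k\}}\bigr].
\]
Since $A\cap\{T=t_k\}\in\mathcal{F}^+(t_k)$ by the definition of $\mathcal{F}^+(T)$, the ordinary Markov property at the deterministic time $t_k$ (applied with the right-continuous filtration, which is valid because Brownian motion is a Feller process) lets me factor each summand as $\mathbb{E}[F(B)]\mathbb{P}(A\cap\{T=t_k\})$, where $B$ denotes an abstract standard Brownian motion. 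Summing in $k$ yields the desired product formula on $\{T<\infty\}$, which by hypothesis is a full-measure event.

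For general $T$, I would set $T_n=\lceil 2^n T\rceil/2^n$. A short check shows each $T_n$ is a stopping time with respect to $\{\mathcal{F}(t)\}$ taking values in $2^{-n}\mathbb{Z}_{\geq 0}$, and $T_n\searrow T$ pointwise. Crucially, $\mathcal{F}^+(T)\subseteq\mathcal{F}^+(T_n)$ for every $n$, so any $A\in\mathcal{F}^+(T)$ is available in the discrete step above. Applying the discrete case I obtain, for bounded continuous $F$,
\[
\mathbb{E}\bigl[F(W(T_n+\cdot)-W(T_n))\mathbf{1}_A\bigr]=\mathbb{E}[F(B)]\,\mathbb{P}(A).
\]
Now I invoke almost sure continuity of $t\mapsto W(t)$: since $T_n\to T$ and the paths are uniformly continuous on compacts, the shifted paths $W(T_n+\cdot)-W(T_n)$ converge locally uniformly to $W(T+\cdot)-W(T)$ almost surely. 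Dominated convergence then gives the same product formula with $T$ in place of $T_n$. Because this holds for all bounded continuous $F$ and all $A\in\mathcal{F}^+(T)$, the post-$T$ process has the finite-dimensional distributions of a standard Brownian motion and is independent of $\mathcal{F}^+(T)$; continuity of sample paths is inherited from the original process, completing the identification.

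The main obstacle, as usual, is the role of the augmented right-continuous filtration $\mathcal{F}^+(T)$ rather than $\mathcal{F}(T)$. Two places need care: (i) the discrete-time step must use the Markov property relative to $\mathcal{F}^+(t_k)$, which is legitimate because for standard Brownian motion $\mathcal{F}^+(t)$ and $\mathcal{F}(t)$ agree up to negligible sets (Blumenthal's $0$--$1$ law); and (ii) one must verify that $\mathcal{F}^+(T)\subseteq\mathcal{F}^+(T_n)$, which follows from the definition once one notes that $\{T_n\leq t\}=\{T\leq 2^{-n}\lfloor 2^n t\rfloor\}\in\mathcal{F}^+(t)$. Everything else is bookkeeping with monotone-class or $\pi$--$\lambda$ arguments to upgrade from bounded continuous cylindrical functionals $F$ to arbitrary events in the path space.
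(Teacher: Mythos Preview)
The paper does not actually prove this theorem: it is quoted as a standard fact with references to \cite[Theorem~2.16]{MP10} and \cite[Theorems~2.20 and~6.17]{LeG16}, and no argument is given in the paper itself. Your sketch is the classical Dynkin--Hunt discretization argument and is essentially the proof in \cite{MP10}, so it is correct and aligned with the cited source.
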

	
	We denote by $T_a$ the \emph{first hitting time} of $W\!(t)$ at level $a$. Namely,
	\[
	T_a = \inf\{ t \geq 0: W\!(t) = a \}.
	\]
	It is easy to see that $T_a$ is a stopping time by \cite[Remark~$2.14$]{MP10}. We can also induce the distribution of $T_a$ from \cite[Theorem~$2.21$]{MP10} and then $T_a< +\infty$ a.s..
	
	In what follows, we denote by $\Gamma(W)$ the graph of $W\!(t)$, i.e.,
	\[
	\Gamma(W) = \left\{(t,W\!(t)): t \geq 0\right\}.
	\]
	More generally, we let
	\[
	\Gamma(W\!(I)) = \left\{(t,W\!(t)): t \in I\right\},
	\]
	for any $I \subseteq [0, +\infty)$.
	
	The following is a well-known fact about Brownian motion, see \cite[Theorem $4.29$ and Corollary $9.30$]{MP10} for more details.
	
	\begin{Proposition}
		Almost surely $\dim_H(\Gamma(W)) = \frac{3}{2}$ and $\dim_H \left( \Gamma(W) \cap Z_a \right) = \frac{1}{2}$ for any $a \in \mathbb{R}$.
	\end{Proposition}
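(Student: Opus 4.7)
The plan is to establish both equalities by combining an upper bound coming from the a.s.\ Hölder regularity of Brownian paths with a lower bound coming from a Frostman-type energy estimate, and to deduce the slicing statement from the fact that $\pi_x$ restricts to an isometry from $\Gamma(W)\cap Z_a$ onto the level set $\mathcal{L}_a=\{t\geq 0:W\!(t)=a\}$. The main obstacle is really the uniformity in $a\in\mathbb{R}$ in the second statement, which is Kaufman's dimension-doubling theorem rather than a mere fixed-level computation.

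For the upper bound $\dim_H\Gamma(W)\leq 3/2$, I would use L\'evy's modulus of continuity, or just the fact that for any $\varepsilon>0$ the sample path is a.s.\ $(1/2-\varepsilon)$-H\"older continuous on every compact interval. Partitioning $[0,T]$ into $n$ subintervals of length $T/n$, the oscillation of $W$ on each is at most $C_\omega (T/n)^{1/2-\varepsilon}$, so the corresponding piece of the graph is covered by $\lesssim n^{1/2+\varepsilon}$ boxes of side $T/n$; summing gives upper Minkowski dimension at most $3/2+2\varepsilon$, hence $\dim_H\Gamma(W)\leq 3/2$. For the reverse inequality, push the Lebesgue measure on $[0,T]$ forward under $t\mapsto (t,W\!(t))$ to obtain a measure $\sigma$ supported on $\Gamma(W)$, and use that $W\!(t)-W\!(s)\sim \mathcal{N}(0,|t-s|)$ to compute
\[
\mathbb{E}\iint \frac{ds\,dt}{\bigl((t-s)^2+(W\!(t)-W\!(s))^2\bigr)^{\alpha/2}}<\infty
\]
for every $\alpha<3/2$; a direct Gaussian calculation yields an integrand comparable to $|t-s|^{-\alpha/2}\wedge |t-s|^{-\alpha}$, which is integrable in that range. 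Frostman's lemma (or the energy criterion) then gives $\dim_H\Gamma(W)\geq\alpha$ a.s., and letting $\alpha\nearrow 3/2$ finishes the first claim.

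For the slicing statement, the key observation is that $\Gamma(W)\cap Z_a$ is the image of $\mathcal{L}_a$ under $t\mapsto (t,a)$, hence isometric to $\mathcal{L}_a$. The upper bound $\dim_H\mathcal{L}_a\leq 1/2$ for \emph{every} $a$ follows from the same H\"older argument: if $W$ is $(1/2-\varepsilon)$-H\"older on $[0,T]$ with constant $C_\omega$, then $\mathcal{L}_a\cap[0,T]$ meets at most $\lesssim C_\omega^{1/(1/2-\varepsilon)}n^{\varepsilon/(1/2-\varepsilon)}$ among the $n$ dyadic intervals of length $1/n$ for which $W$ comes within $C_\omega n^{-1/2+\varepsilon}$ of $a$, and a counting argument using the uniform oscillation bound yields $\dim_H\mathcal{L}_a\leq 1/2$ simultaneously for all $a$. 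For the lower bound, one uses the local time $L^a(t)$ of $W$ at level $a$: the joint continuity of $(t,a)\mapsto L^a(t)$ and the H\"older regularity in $a$ imply that for every $a$ in the (open) range of $W$ the measure $dL^a$ is a nonzero Radon measure supported on $\mathcal{L}_a$ whose $\phi$-energy is finite for $\phi(r)=r^s$ with any $s<1/2$, again via a Frostman/energy argument.

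The main obstacle, as anticipated, is the uniformity over all $a\in\mathbb{R}$ simultaneously rather than the easier ``for almost every $a$, a.s.'' statement. The upper-bound side is handled by a clever two-scale covering that exploits the modulus of continuity uniformly in the level; the lower-bound side requires that $L^a$ be genuinely nondegenerate and H\"older in $a$ for every $a$ with probability one, which is a stronger joint-continuity statement. Once these uniform statements are in hand, $\dim_H(\Gamma(W)\cap Z_a)=\dim_H\mathcal{L}_a=1/2$ for every $a\in\mathbb{R}$ a.s., as asserted. I would cite Kaufman's theorem for this uniformity rather than reproving it, which is exactly what the authors do in referring to \cite[Corollary~9.30]{MP10}.
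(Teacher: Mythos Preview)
The paper does not prove this proposition; it simply records it as a well-known fact with references to \cite[Theorem~4.29 and Corollary~9.30]{MP10}, which is exactly where your proposal lands after sketching the standard H\"older/energy arguments and then deferring to Kaufman's theorem for the uniformity in $a$. Your outline is correct in spirit, though the counting in your uniform upper bound for $\dim_H\mathcal{L}_a$ is slightly garbled (H\"older regularity alone does not give that bound---one also needs the occupation time formula via $\sup_a L^a(T)<\infty$); since you ultimately cite the same references rather than relying on that sketch, the proposal stands.
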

	
	In order to measure $\Gamma(W) \cap Z_a$, we need to introduce the local time of $W\!(t)$. The classical local time $L^0(t)$ of a linear Brownian motion $W$ is a nondecreasing stochastic process that measures the amount of time $W$ spends at $0$. In general, we can define $L^a(t)$, the local time of $W$ at $a$, to be a quantity that measures the amount of time $W$ spends at $a$. The local time $L^0(t)$ induces a Hausdorff measure on the real line, see \cite[Theorem $6.43$]{MP10}.
	
	Next we rigorously define the concept of local time. Let $W\!(t)$ be a linear Brownian motion with arbitrary starting point. Given $a < b$, we define the stopping times $\tau_0 = 0$ and, for $j \geq 1$,
	\[
	\sigma_j = \inf\{t>\tau_{j-1}: W\!(t) = b\}, \ \tau_j = \inf\{t>\sigma_{j}: W\!(t) = a\}.
	\]
	We call $W\!([\sigma_i, \tau_i])$ a \emph{downcrossing} of the interval $[a,b]$. For every $t > 0$ we denote by
	\[
	D(a,b,t)=  \max\{j\in \mathbb{N}: \tau_j \leq t\}.
	\]
	This is the number of downcrossings of the interval $[a, b]$ before time $t$.
	
	Let $a_n < 0 < b_n$ such that $a_n \uparrow 0$ and $b_n \downarrow 0$. The \emph{local time} of $W$ at zero is a stochastic process defined in the following way:
	\[
	L^0(t) = \lim_{n \to \infty}2(b_n-a_n)D(a_n, b_n,t).
	\]
	The definition of $L^0(t)$ is independent of the choice of $\{a_n \}$ and $\{ b_n\}$.
	
	In general, we define $L^a(t)$ to be the local time of $W\!(t)$ at level $a$, i.e.,
	\begin{eqnarray}\label{localtime}
		L^a(t) = \lim_{n \to \infty} 2^{-n+1}D_n(a,t)
	\end{eqnarray}
	where $D_n(a,t)$ is the number of downcrossings before time $t$ of the unique $n^{\mathrm{th}}$-generation dyadic interval $(i2^{-n} , (i+ 1)2^{-n} ]$ containing $a$. The reader may refer to \cite[Chapter $6$]{MP10} for more details. The definition of $L^a(t)$ in \cite[Chapter $6$]{MP10} slightly  differs from the one given above, but the two are equivalent.
	
	The following theorem shows that local time exists and is almost surely locally $\gamma$-H\"older continuous with respect to $a$ and $t$ for any $\gamma < \frac{1}{2}$. See \cite[Theorem $6.19$]{MP10}.
	
	\begin{Theorem}[Trotter]\label{lf}
		Let $W\!(t)$ be a linear Brownian motion and let $D_n(a,t)$ be the number of downcrossings before time $t$ of the $n^{\mathrm{th}}$-generation dyadic interval $(i2^{-n} , (i+ 1)2^{-n} ]$  containing $a$. Then, almost surely,
		\[
		L^a(t) = \lim_{n \to \infty} 2^{-n+1}D_n(a,t)
		\]
		exists for all $a \in \mathbb{R}$ and $t \geq 0$. Moreover, for every $\gamma < \frac{1}{2}$, the random field $\{L^a(t): a \in\mathbb{R}, t \geq 0\}$ is almost surely locally $\gamma$-H\"older continuous.
	\end{Theorem}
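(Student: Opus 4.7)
The plan is to first establish the almost-sure convergence of $L_n^a(t) := 2^{-n+1} D_n(a,t)$ for each fixed $(a,t)$, then to produce uniform $L^p$-bounds on the increments $L_n^a(t)-L_n^b(s)$ that persist in the limit, and finally to invoke Kolmogorov's continuity criterion to upgrade this pointwise limit to a jointly locally $\gamma$-Hölder continuous random field for every $\gamma<1/2$. Once the limit field is continuous in $(a,t)$ on a dense countable set of parameters, continuity will automatically promote ``a.s.~for each fixed $(a,t)$'' to ``a.s.~simultaneously for all $(a,t)$,'' which is the second conclusion of the theorem.

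For fixed $a\in\mathbb{R}$ and $t>0$ I would compare downcrossings of two consecutive dyadic scales. Using the strong Markov property (Theorem \ref{Strong Markov}) at the hitting times $\sigma_j,\tau_j$ of the endpoints of the generation-$n$ interval containing $a$, the number of downcrossings at scale $n+1$ inside a single generation-$n$ downcrossing is a sum of i.i.d.\ geometric-type random variables, and in particular $\mathbb{E}[L_{n+1}^a(t)\mid\mathcal{F}^+(T)]=L_n^a(t)$ up to small boundary corrections, so the sequence $\{L_n^a(t)\}$ is essentially a nonnegative martingale bounded in $L^2$. A direct variance computation gives $\mathbb{E}|L_{n+1}^a(t)-L_n^a(t)|^2 \lesssim 2^{-n}\,\mathbb{E}[L_n^a(t)]$, which is summable and yields both almost-sure convergence and an $L^p$-limit $L^a(t)$ for every $p\geq 1$ (the $L^p$ bound obtained by iterating the strong Markov identity at the hitting times; compare the occupation-time formula).

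Next I would derive the two-parameter moment bound
\begin{equation*}
\mathbb{E}\bigl|L^a(t)-L^b(s)\bigr|^{2k} \;\le\; C_k\bigl(|a-b|+|t-s|\bigr)^k,\qquad k\in\mathbb{N},
\end{equation*}
on bounded rectangles in $(a,t)$-space. For fixed $a$ and varying $t$, this follows from the martingale estimate above together with the additive structure $L^a(t)-L^a(s)$ being the local time accumulated by the shifted Brownian motion on $[s,t]$, combined again with the strong Markov property at $T_a$. For varying $a$ and fixed $t$, I would apply the same downcrossing martingale argument at both levels $a$ and $b$ and use that, after a few generations, the generation-$n$ dyadic intervals containing $a$ and $b$ become disjoint, so the increments of downcrossings at scales $2^{-n}\ll|a-b|$ involve essentially independent Brownian excursions whose number is controlled by $|a-b|^{1/2}$-type Gaussian tail estimates. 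Summing the two one-parameter estimates along any diagonal path yields the displayed bound. Kolmogorov's continuity theorem then produces a modification of $(a,t)\mapsto L^a(t)$ which is jointly locally $\gamma$-Hölder for every $\gamma<(k-1)/(2k)$; letting $k\to\infty$ gives every $\gamma<1/2$.

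The main obstacle I anticipate is the spatial moment bound, i.e.\ controlling $\mathbb{E}|L_n^a(t)-L_n^b(t)|^{2k}$ uniformly in $n$ when $|a-b|$ is much larger than $2^{-n}$. The difficulty is that $D_n(a,t)$ and $D_n(b,t)$ are not independent -- they are coupled through the single sample path $W$ -- and one must carefully decompose the path into excursions above and below an intermediate level, apply the strong Markov property at the hitting times of that level, and use sharp Gaussian tail bounds for the hitting time distributions deduced from \cite[Theorem~2.21]{MP10}. Once this uniform-in-$n$ estimate is in hand, passing to the $L^p$ limit and applying Kolmogorov is routine, and the almost-sure simultaneous statement follows by taking the continuous modification and identifying it with the pointwise limit on the countable dense set of dyadic $(a,t)$ where the convergence already holds almost surely.
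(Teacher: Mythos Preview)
The paper does not prove this theorem. Theorem~\ref{lf} is stated as a known result attributed to Trotter, with the citation ``See \cite[Theorem $6.19$]{MP10}'' immediately preceding it; no proof is given or attempted in the paper itself. So there is no proof in the paper to compare your proposal against.

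That said, your outline is broadly the standard route to Trotter's theorem as presented in \cite{MP10}: one shows that $L_n^a(t)$ is (up to boundary terms) a martingale in $n$, obtains $L^p$ convergence, establishes moment bounds of Kolmogorov type in $(a,t)$, and applies the continuity criterion. Your sketch is honest about where the real work lies --- the uniform-in-$n$ spatial moment estimate --- and correctly identifies the strong Markov decomposition as the mechanism. If you wish to supply a full proof rather than cite the literature, the argument in \cite[Chapter~6]{MP10} fills in exactly the steps you describe, so you could either carry out those details or simply retain the citation as the paper does.
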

	
	The following theorem studies the local time before a hitting time . See \cite[Theorem $6.38$]{MP10}.
	
	\begin{Theorem}[Ray]\label{ltv}
		Suppose $b > 0$ and $\{W\!(t){:} 0\, {\leq} \, t {\leq}\, T_b\}$ is a standard Brownian motion stopped at time $T_b$. Then, almost surely, $L^a(T_b) > 0$ for all $0 \leq a < b$.
	\end{Theorem}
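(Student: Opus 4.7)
The principal difficulty is that the conclusion must hold for every $a \in [0,b)$ simultaneously; pointwise positivity for a dense set of $a$, even combined with the continuity of $a \mapsto L^a(T_b)$ provided by Trotter's theorem (Theorem \ref{lf}), is not enough to exclude isolated zeros. For this reason, any plan that only proves positivity for each fixed $a$ has to be supplemented by a continuity-of-the-process argument, whereas going through a diffusion identification delivers the simultaneous statement in one shot.

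The cleanest route is the first Ray--Knight theorem, which identifies the process $s \mapsto L^{b-s}(T_b)$, $s \in [0,b]$, with a squared Bessel process of dimension $2$ started from $0$. My plan is first to fix the jointly continuous version of $(a,t)\mapsto L^a(t)$ given by Trotter's theorem, so that the statement becomes a genuine pathwise one. I would then establish the Ray--Knight identification by applying the strong Markov property (Theorem \ref{Strong Markov}) at the successive first-hitting times of a descending sequence of levels $b = a_0 > a_1 > \cdots > a_n = 0$. The Brownian path up to $T_b$ then splits into independent excursion pieces above each slab $[a_{i+1}, a_i]$; feeding this decomposition into the down-crossing definition (\ref{localtime}) of local time shows that $s \mapsto L^{b-s}(T_b)$ is itself Markov in $s$ and identifies its infinitesimal generator as that of the BESQ$(2)$ diffusion $dX = 2\sqrt{X}\, dB + 2\, ds$ with $X_0 = 0$. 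Passing from the discrete down-crossing limit to the continuous diffusion is the technical core.

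Once Ray--Knight is in hand, the conclusion is immediate from the standard fact that a squared Bessel process of dimension $d \geq 2$ started at $0$ leaves $0$ instantaneously and never returns, since $d = 2$ is the critical dimension for hitting $0$. Applied to $X_s = L^{b-s}(T_b)$ this yields $L^{b-s}(T_b) > 0$ for every $s \in (0,b]$ almost surely, which is precisely $L^a(T_b) > 0$ for every $a \in [0,b)$.

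The main obstacle is the Ray--Knight identification itself, specifically controlling the limit of the generator of the finite-dimensional down-crossing Markov chains. As a fallback giving only the weaker ``for each fixed $a$ almost surely'' version, one can proceed elementarily: apply the strong Markov property at $T_a$ to reduce to showing $\widehat{L}^0(\widehat T_{b-a}) > 0$ for a freshly started Brownian motion $\widehat W$; use Brownian scaling to reduce to $L^0(T_1) > 0$; and conclude from the regularity of $0$ for $\{0\}$ by verifying that the down-crossing count $D_n(0, T_1)$ grows like $2^n$ almost surely, so the limit in (\ref{localtime}) is strictly positive. Upgrading this to the ``for every $a$'' version without invoking Ray--Knight would then require a separate excursion-theoretic argument combined with the joint continuity in $(a,t)$.
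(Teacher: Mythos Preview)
The paper does not supply its own proof of this theorem; it is stated as a known result with a citation to \cite[Theorem~6.38]{MP10}. Your proposal, via the first Ray--Knight theorem identifying $s \mapsto L^{b-s}(T_b)$ with a BESQ$(2)$ process and then using that BESQ$(2)$ does not return to $0$, is exactly the standard route and is the one taken in the cited reference. So there is nothing to compare against in the paper itself, and your plan is both correct and the expected approach.

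One minor comment: your fallback ``for each fixed $a$'' argument is fine as stated, and you are right that upgrading it to the simultaneous statement is the genuine content. You could also note, as a sanity check, that once Ray--Knight is in hand the endpoint $a=b$ gives $L^b(T_b)=X_0=0$, which is consistent with the strict inequality being claimed only for $a\in[0,b)$.
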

	
	In \cite[Lemma $1$]{Law99}, a standard technique is presented for estimating an upper bound of the Hausdorff dimension of random subsets of the interval $[0, 1]$. The main strategy involves dividing the unit interval into a finite number of smaller subintervals and subsequently analyzing the statistics of how many of these subintervals intersect with the given random subset.
	
	Below, we let $[a]$ denote the integer part of $a$. Let ${A_n}$ be a collection of random subsets of the interval $[0, 1]$, and define $A = \bigcap_{i=1}^\infty \bigcup_{n=i}^\infty A_n$, i.e., $A = \limsup_{n \to \infty} A_n$. Lastly, let $s$ be a nonnegative real number. Below we will need the following result, see \cite[Lemma $1$]{Law99}.
	
	\begin{Lemma}\label{sdim}
		Let $p > 1$. If there exists a $C > 0$ such that
		\[
		\sum_{i=1}^{[p^n]+1}\mathbb{P}\left(\left[\frac{i-1}{p^{n}}, \frac{i}{p^n}\right] \cap A_n \neq \emptyset\right) \leq Cp^{sn},
		\]
		then almost surely $\dim\left(A \right) \leq s$.
		
		Moreover, for any $\eps > 0$, almost surely,
		\[
		\Card \left( \left\{ \left[\frac{i-1}{p^{n}}, \frac{i}{p^n}\right] : A_n \cap \left[\frac{i-1}{p^{n}}, \frac{i}{p^n}\right] \neq \emptyset  \right\} \right) \leq p^{(s+\eps)n}
		\]
		for sufficiently large $n$.
	\end{Lemma}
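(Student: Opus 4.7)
My plan is to derive both assertions from the first moment method combined with the Borel--Cantelli lemma. Let $N_n$ denote the (random) cardinality that appears in the second display of the statement, that is, the number of $n$-th generation $p$-adic intervals $[(i-1)/p^n, i/p^n]$ whose intersection with $A_n$ is nonempty. The hypothesis can be rewritten as $\mathbb{E}[N_n] = \sum_{i=1}^{[p^n]+1}\mathbb{P}\bigl([(i-1)/p^n, i/p^n]\cap A_n\neq\emptyset\bigr) \leq Cp^{sn}$.

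Fix $\eps>0$. By Markov's inequality,
\[
\mathbb{P}\bigl(N_n > p^{(s+\eps)n}\bigr) \;\leq\; \frac{\mathbb{E}[N_n]}{p^{(s+\eps)n}} \;\leq\; Cp^{-\eps n},
\]
and this is summable in $n$. The Borel--Cantelli lemma therefore gives $N_n \leq p^{(s+\eps)n}$ for all sufficiently large $n$, almost surely. This is exactly the second assertion.

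To deduce the first assertion from this, I would fix $\eps>0$ and work on the full-measure event $\Omega_\eps$ on which the covering bound above holds eventually. Since $A = \bigcap_{i\geq 1}\bigcup_{n\geq i} A_n$, for each $i$ the set $A$ is covered by the union over $n\geq i$ of the at most $N_n$ intervals of diameter $p^{-n}$ that meet $A_n$. Estimating the $(s+2\eps)$-Hausdorff pre-measure with this cover,
\[
\mathcal{H}^{s+2\eps}_{p^{-i}}(A) \;\leq\; \sum_{n\geq i} N_n \cdot (p^{-n})^{s+2\eps} \;\leq\; \sum_{n\geq i} p^{-\eps n} \;\xrightarrow[i\to\infty]{}\; 0,
\]
on $\Omega_\eps$. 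Hence $\mathcal{H}^{s+2\eps}(A)=0$ and $\dim_H(A)\leq s+2\eps$ almost surely. Intersecting the events $\Omega_{1/k}$ over all $k\in\mathbb{N}$ yields $\dim_H(A)\leq s$ almost surely.

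There is really no substantive obstacle here; this is the textbook first moment argument. The only point worth flagging is that the intervals in the cover have diameter $p^{-n}\to 0$ as $n\to\infty$, which lets us send the covering scale to zero by choosing $i$ large, and that the exponent $s+2\eps$ (rather than $s+\eps$) is needed in the Hausdorff sum in order to absorb the $p^{(s+\eps)n}$ cardinality bound and leave a convergent geometric series.
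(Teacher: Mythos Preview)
Your argument is correct and is the standard first-moment/Borel--Cantelli proof of this type of statement. Note that the paper does not actually prove Lemma~\ref{sdim}: it is quoted from \cite[Lemma~1]{Law99} without proof, so there is no ``paper's own proof'' to compare against. Your write-up is essentially the argument one finds in that reference, and the only cosmetic point is that once you have established the second assertion (the cardinality bound), the Hausdorff-measure estimate you give with exponent $s+2\eps$ is exactly how Lawler's argument concludes as well.
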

	
	Let $W\!(t)$ be a standard Brownian motion. We define
	\[
	M(t) = \max_{0 \leq s \leq t}W\!(s),
	\]
	and observe that for any $a >0$,
	\[
	\mathbb{P}(M(t) > a) = 2\mathbb{P}(W\!(t) > a) = \mathbb{P}(|W\!(t)| > a)
	\]
	by \cite[Theorem $2.21$]{MP10}.
	
	Recall that $T_a = \inf\{t > 0: W\!(t) = a \}$. Then for any $a > 0$,
	\begin{equation}\label{hitdist}
		\mathbb{P}\left( T_a > t \right) = \mathbb{P}\left( M(t) < a \right) = 2\mathbb{P}(W\!(t) < a) - 1.
	\end{equation}
	
	Let $a > 0$. We define
	\[
	T_{\scriptscriptstyle \pm a}(t_0) = \inf\{t - t_0: |W\!(t) - W\!(t_0)| = a, t > t_0 \},
	\]
	i.e., the difference between $t_0$ and first hitting time of the Brownian motion starting at time $t_0$ that hits either $W\!(t_0)+ a$  or $W\!(t_0)-a$. This clearly implies that
	\[
	\mathbb{P}\left( T_{\scriptscriptstyle \pm a}(t_0) > t \right) \leq 4\mathbb{P}(W\!(t) < a) - 2.
	\]
	by equation \eqref{hitdist} and the Strong Markov Property \ref{Strong Markov}.
	\begin{Lemma}\label{hitdim}
		For any $\frac{2}{3} < \alpha \leq  2$, almost surely,
		\begin{align}\label{dimest:slowpoints}
		\dim\left\{t \in [0,1]: \limsup_{a \to 0^+} \frac{T_{\scriptscriptstyle \pm a}(t)}{a^\alpha} > 1 \right\} \leq \frac{3}{2} - \frac{1}{\alpha}.
		\end{align}
	\end{Lemma}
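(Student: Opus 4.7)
The strategy is to apply Lemma~\ref{sdim} with a carefully chosen scale. Set $p=2^{\alpha}$ so the dyadic intervals $I_i^n=[(i-1)p^{-n},ip^{-n}]$ have length $\tau_n:=p^{-n}=2^{-n\alpha}$, and put $a_n=2^{-n}$ so that $a_n^{\alpha}=\tau_n$. Define the random set
\[
A_n:=\bigl\{t\in[0,1]\,:\, T_{\pm 2a_n}(t)>\tau_n\bigr\}.
\]
My first step is to verify that the slow-point set $S:=\{t:\limsup_{a\to 0^{+}}T_{\pm a}(t)/a^{\alpha}>1\}$ is contained in $\limsup_n A_n$. Given $t\in S$ and a sequence $a_k\downarrow 0$ with $T_{\pm a_k}(t)>a_k^{\alpha}$, I will choose $n_k$ so that $a_{n_k}\le a_k<2a_{n_k}$. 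Monotonicity of $T_{\pm\cdot}$ in the radius then yields $T_{\pm 2a_{n_k}}(t)\ge T_{\pm a_k}(t)>a_k^{\alpha}\ge 2^{-n_k\alpha}=\tau_{n_k}$, so $t\in A_{n_k}$; since $n_k\to\infty$, this gives $t\in\limsup_n A_n$.

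The next step is to bound $\mathbb{P}(I_i^n\cap A_n\neq\emptyset)$. Starting from the estimate recorded just before the statement of the lemma, namely $\mathbb{P}(T_{\pm a}(t_0)>t)\le 4\mathbb{P}(W(t)<a)-2$, combined with the Gaussian bound $\Phi(x)-\tfrac12\le x/\sqrt{2\pi}$, I obtain the single-point polynomial estimate
\[
\mathbb{P}\bigl(T_{\pm 2a_n}(t_0)>\tau_n\bigr)\;\le\; C\,\frac{2a_n}{\sqrt{\tau_n}}\;=\;C'\,2^{-n(1-\alpha/2)}.
\]
The key step (and main obstacle, see below) is to promote this pointwise bound to the existence bound $\mathbb{P}(I_i^n\cap A_n\neq\emptyset)\le C''\,2^{-n(1-\alpha/2)}$. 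Once this is in hand, summing over the at most $p^n+1$ intervals gives
\[
\sum_{i=1}^{[p^n]+1}\mathbb{P}(I_i^n\cap A_n\neq\emptyset)\;\le\; C'''\,2^{n\alpha}\cdot 2^{-n(1-\alpha/2)}\;=\;C'''\,2^{n(3\alpha/2-1)}\;=\;C'''\,p^{sn},
\]
where $s=3/2-1/\alpha$ since $\alpha s=3\alpha/2-1$. Lemma~\ref{sdim} then yields $\dim(\limsup_n A_n)\le 3/2-1/\alpha$ almost surely, and the inclusion $S\subseteq\limsup_n A_n$ closes the argument.

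\textbf{Main obstacle.} The crux is the reduction of the existence event on $I_i^n$ to a single-point event with comparable probability. By the strong Markov property at $(i-1)\tau_n$ and Brownian scaling, this reduces to estimating $q(b_n):=\mathbb{P}\bigl(\exists t\in[0,1]:\tilde T_{\pm b_n}(t)>1\bigr)$ for a standard Brownian motion $\tilde W$, where $b_n=2a_n/\sqrt{\tau_n}\to 0$. The plan is to dissect $I_i^n$ (after rescaling) using a grid of candidate starting times at the decorrelation scale $\asymp b_n^{2}$, show that any slow time must lie within $\asymp b_n^{2}$ of such a grid point (via the modulus of continuity of $\tilde W$), and then transfer the slow event at the true slow time to a slightly relaxed slow event at the nearest grid point. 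A union bound against the single-point polynomial estimate then bounds $q(b_n)$ by a constant multiple of $b_n$, giving exactly the bound $\lesssim 2^{-n(1-\alpha/2)}$ needed to complete the sum above. Balancing the grid spacing against the a.s.\ modulus-of-continuity control on the excess intervals is the delicate point that will take most of the work.
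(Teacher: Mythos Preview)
Your overall architecture---reduce to Lemma~\ref{sdim}, show the inclusion $S\subseteq\limsup_n A_n$, and feed in a single-point Gaussian estimate---matches the paper exactly, and your treatment of the inclusion and the pointwise bound are fine. The gap is precisely in your ``main obstacle'': the fine-grid plan does not produce the claimed bound.

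After rescaling you are trying to show $q(b_n):=\mathbb{P}\bigl(\exists\,t\in[0,1]:\tilde T_{\pm b_n}(t)>1\bigr)\lesssim b_n$. A grid at spacing $\asymp b_n^{2}$ contains $\asymp b_n^{-2}$ points; transferring the slow event to the nearest grid point and using the single-point bound $\mathbb{P}(\tilde T_{\pm cb_n}(t_j)>c')\asymp b_n$ gives, by your proposed union bound, only $q(b_n)\lesssim b_n^{-2}\cdot b_n=b_n^{-1}\to\infty$, which is useless. No amount of balancing the grid spacing helps: any spacing $\delta$ gives $\lesssim \delta^{-1}b_n$, and you cannot take $\delta\asymp 1$ while controlling $|\tilde W(t)-\tilde W(t_j)|$ by a modulus-of-continuity estimate alone.

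The paper's resolution is much simpler and avoids grids entirely. Split the interval $[0,\tau_n]$ in half. If there is a slow point $t_0$ in the first half $[0,\tau_n/2]$, then the slow condition $|W(s)-W(t_0)|<a_n$ for $s\in[t_0,t_0+\tau_n]$ already \emph{deterministically} covers the interval $[\tau_n/2,\tau_n]$; by the triangle inequality the fixed midpoint $\tau_n/2$ satisfies $T_{\pm 2a_n}(\tau_n/2)>\tau_n/2$. Thus the existence event on the first half is contained in a single-point event at $\tau_n/2$, and similarly (via the strong Markov property) for the second half. This gives $\mathbb{P}(A_n\cap I_i^n\neq\emptyset)\lesssim 2a_n/\sqrt{\tau_n}\asymp 2^{-n(1-\alpha/2)}$ with only $O(1)$ test points, and your summation and application of Lemma~\ref{sdim} then go through verbatim.
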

	
	\begin{proof}
		Let
		\[
		A_n = \left\{t \in [0, 1]: T_{\scriptscriptstyle \pm \frac{1}{2^n}}(t) > \frac{1}{2^{\alpha(n+1)}} \right\}.
		\]
		
		Let $t \in [0,1]$ be such that $\limsup_{a \to 0^+} \frac{T_{\scriptscriptstyle \pm a}(t)}{a^\alpha} > 1$. Then there exists a sequence $a_n > 0$ such that $a_n \to 0$ and $T_{\scriptscriptstyle \pm a_n}(t) > (a_n)^\alpha$. Notice that for any $a_n$, there exists $N$ such that $\frac{1}{2^{N+1}} \leq a_n \leq \frac{1}{2^N}$. Thus
		\[
		T_{\scriptscriptstyle \pm \frac{1}{2^N}}(t) > \frac{1}{2^{\alpha(N+1)}}.
		\]
		Then $t \in \bigcap_{i=1}^\infty \bigcup_{n=i}^\infty A_n$. This implies that
		\[
		\left\{t \in [0,1]: \limsup_{a \to 0^+} \frac{T_{\scriptscriptstyle \pm a}(t)}{a^\alpha} > 1\right\} \subseteq \bigcap_{i=1}^\infty \bigcup_{n=i}^\infty A_n = \limsup_{n \to \infty} A_n.
		\]
		
		Assume that $A_n \cap \left[0, \frac{1}{2^{\alpha (n+1)+1}}\right] \neq \emptyset$, then there exist a point $t_0 \in \left[0, \frac{1}{2^{\alpha (n+1)+1}}\right] $ such that $T_{\scriptscriptstyle \pm \frac{1}{2^n}}(t_0) > \frac{1}{2^{\alpha (n+1)}}$. This implies that
		\[
		\max \left\{\left| W\!(s)- W\!(t_0) \right|: \frac{1}{2^{\alpha (n+1)+1}} \leq s \leq \frac{1}{2^{\alpha (n+1)}} \right\}< \frac{1}{2^{n}}.
		\]
		Thus
		\[
		T_{\scriptscriptstyle \pm \frac{1}{2^{n-1}}} \left(\frac{1}{2^{\alpha (n+1)+1}}\right) >  \frac{1}{2^{\alpha (n+1)+1}} .
		\]
		
		Since
		\begin{align*}
			\mathbb{P}\left(A_n \cap \left[0, \frac{1}{2^{\alpha (n+1)+1}} \right] \neq \emptyset \right) & \leq \mathbb{P}\left( T_{\scriptscriptstyle \pm \frac{1}{2^{n-1}}} \left(\frac{1}{2^{\alpha (n+1)+1}}\right) >  \frac{1}{2^{\alpha (n+1)+1}} \right)
			\\
			& \leq 4 \mathbb{P}\left( W\!\!\left(\frac{1}{2^{\alpha (n+1)+1}}\right) <  \frac{1}{2^{ n-1}} \right) -2,
		\end{align*}
		we have
		\begin{align*}
			&\mathbb{P}\left(A_n \cap \left[0, \frac{1}{2^{\alpha (n+1)}} \right] \neq \emptyset \right)
			\\
			&= \mathbb{P}\left(A_n \cap \left[0, \frac{1}{2^{\alpha (n+1)+1}} \right] \neq \emptyset \right) + \mathbb{P}\left(A_n \cap \left[\frac{1}{2^{\alpha (n+1)+1}}, \frac{1}{2^{\alpha (n+1)}} \right] \neq \emptyset \right)
			\\
			& \leq 8 \mathbb{P}\left( W\!\!\left(\frac{1}{2^{\alpha (n+1)+1}}\right) <  \frac{1}{2^{ n-1}} \right) - 4 \leq C\frac{1}{2^{\left(1-\frac{\alpha}{2}\right)(n+1)}}
		\end{align*}
		for some $C > 0$. The first inequality follows from  the previous inequality and the Strong Markov Property \ref{Strong Markov}, and the second inequality {comes from the density function of the Brownian motion}.
		
		Moreover, for any $i \in \mathbb{N}$,
		\[
		\mathbb{P}\left(A_n \cap \left[\frac{i-1}{2^{\alpha (n+1)}}, \frac{i}{2^{\alpha (n+1)}}\right] \neq \emptyset\right) \leq C\frac{1}{2^{\left(1-\frac{\alpha}{2}\right)(n+1)}}
		\]
		by the Strong Markov Property \ref{Strong Markov}.
		Therefore,
		\[
		\sum_{i=1}^{[2^{\alpha (n+1)}]+1}\mathbb{P}\left(A_n \cap \left[\frac{i-1}{2^{\alpha (n+1)}}, \frac{i}{2^{\alpha (n+1)}}\right] \neq \emptyset\right) \leq (C+1) 2^{\left(\frac{3}{2}\alpha - 1 \right) (n+1)}.
		\]
		By Lemma \ref{sdim} we have that almost surely
		\[
		\dim \left( \limsup_{n \to \infty} A_n\right) \leq \frac{3}{2} - \frac{1}{\alpha},
		\]
		which, in particular, implies that (\ref{dimest:slowpoints}) holds almost surely.
	\end{proof}
	
	\begin{Remark}\label{sufficientcover}
		Since
		\[
		\sum_{i=1}^{[2^{\alpha (n+1)}]+1}\mathbb{P}\left(A_n \cap \left[\frac{i-1}{2^{\alpha (n+1)}}, \frac{i}{2^{\alpha (n+1)}}\right] \neq \emptyset\right) \leq (C+1) 2^{\left(\frac{3}{2}\alpha - 1 \right) (n+1)},
		\]
		it follows from Lemma \ref{sdim} that for any $\eps > 0$, a.s.
		\[
		\Card\left( \left[\frac{i}{2^{\alpha(n+1)}}, \frac{i+1}{2^{\alpha(n+1)}}\right] \subseteq [0,1]: A_n \cap \left[\frac{i}{2^{\alpha(n+1)}}, \frac{i+1}{2^{\alpha(n+1)}}\right] \neq \emptyset \right) \leq 2^{\left(\frac{3}{2}\alpha - 1 + \eps\right)(n+1)}
		\]
		for sufficiently large $n$.
	\end{Remark}
	
	\section{The Brownian graph is minimal almost surely}\label{BGMAS}
	
	This section is devoted to the proof of Theorem \ref{bg}.
	
	\subsection{Constructing the measure $\mu$} Our first step is to construct a measure on $\Gamma(W)$ as described in equation \eqref{spacemeasure}.
	
	For any $a \in \mathbb{R}$, we first define a Borel measure $l^a$ on $\Gamma(W) \cap Z_a$  by
	\[
	l^a\left(  \Gamma(W\!([0,t])) \cap Z_a \right) = L^a(t).
	\]
	
	Then the Borel measure $\mu$ on $\Gamma(W)$ is defined as
	\begin{equation}
		\mu\left( A \right) =  \int_{-\infty}^{\infty} l^a(A \cap Z_a) \ da
	\end{equation}
	where $A$ is a Borel subset of $\Gamma(W)$. 
	
	The measure $\mu$ is a well-defined Borel measure since it is obtained by integrating a one-parameter family of Borel measures with respect to the Lebesgue measure on the line. Since $\Gamma(W)$ is second countable, every open set in $\Gamma(W)$ is $\sigma$-compact. Therefore, $\mu$ is a Radon measure by \cite[Theorem~$2.18$]{Rud86}.
	
	We proceed to show that the measure $\mu$ is doubling a.s.. In fact, $l^a$ equals to a Hausdorff measure with a fixed gauge function for every dyadic $a \in  [0,1]$ by \cite[Theorem~$6.43$]{MP10} and the strong Markov property \ref{Strong Markov}. It then follows from \cite[Lemma~$6.19$]{MP10} that for any $a \in [0,1]$, we can approximate $l^a$ by a sequence of $\{l^{a_n}\}$ where $a_n$ is dyadic. Thus $l^a$ equals to the above mentioned Hausdorff measure for every $a \in [0,1]$ a.s.. This implies that $\mu$ is doubling a.s..
	
	
	We claim that for any  $x \in \Gamma(W)$, $ \eps > 0$, there exist $C = C(x, \eps)$ and $r_0 = r_0(x)$ such that when $r < r_0$,
	\begin{equation}\label{umassbound}
		\mu\left( B(x,r) \cap \Gamma(W) \right) \leq C \cdot r^{\frac{3}{2}-\eps}.
	\end{equation}
	
	Recall that, by Theorem \ref{lf}, $L^a(t)$ is a.s. locally $\gamma$-H\"older continuous for any $\gamma < \frac{1}{2}$. Then for any $x \in \Gamma(W)$ and $\eps > 0$, there exist $C_1 = C_1(x, \eps)$ and $r_0 = r_0(x)$ such that whenever $r < r_0$ we have
	\[
	|L^{a_1}(t_1) - L^{a_2}(t_2)| \leq C_1 \left(\sqrt{ |a_1-a_2|^2 + |t_1-t_2|^2 } \right)^{\frac{1}{2}-\eps}
	\]
	for any $(t_1,a_1),(t_2,a_2) \in B(x,r) \cap \Gamma(W)$. Thus for any $a \in \mathbb{R}$,
	\begin{equation*}
		l^a(B(x,r) \cap Z_a) = \sup\{|L^a(s) - L^a(t)| \!:\!  (s,W\!(s)), (t, W\!(t)) \in B(x,r) \cap Z_a \}\leq C_1 r^{\frac{1}{2}-\eps}.
	\end{equation*}
	
	Then
	\begin{align*}
		\mu\left( B(x,r) \cap \Gamma(W) \right) & = \int_{-\infty}^{\infty} l^a(B(x,r) \cap Z_a) da
		\\
		& \leq \int_{\pi_y(x)-r}^{\pi_y(x)+r} C_1 r^{\frac{1}{2}-\eps} da
		\\
		& \leq C r^{\frac{3}{2}-\eps}
	\end{align*}
	for some $C = C(x, \eps) > 0$ a.s..
	
	\begin{Remark}\label{smallradius}
		If $K$ is a compact subset of $\Gamma(W)$, the inequality \eqref{umassbound} is valid for $K$ in greater generality, i.e., for any $\eps>0$, $r >0$, there exist $C = C(x, \eps, K)$ such that $\mu\left( B(x,r) \cap K \right) \leq C \cdot r^{\frac{3}{2}-\eps}$.
	\end{Remark}
	
\subsection{A decomposition of the graph $\Gamma(W)$}\label{verticalCS}
	
In this section we construct a decomposition of the Brownian graph $\Gamma(W)$ into disjoint subsets $\left\{E_{n,j}^m\right\}$ which behave nicely under projections onto the $y$-axis and which will later be instrumental in defining the vertical Cantor sets $E$ in $\Gamma(W)$. We also partition the Brownian graph into two disjoint subsets $A$ and $A'$ so that $\Gamma(W)$ has ``fast growth'' on $A$ and ``slow growth'' on $A'$. We will show that the set $A$ is large enough so that it contains a rich family $\mathcal{E}=\{E\}$ of vertical Cantor sets.  Moreover,  every $E\in\mathcal{E}$ covers the whole interval $\{0\}\times(0,1)$ when projected onto the $y$-axis and has conformal dimension $1$.

We start by letting 
	\[
	X = \Gamma(W) \cap [0, T_6] \times [0,1].
	\]
	In the rest of this section we restrict our proofs and constructions to $X$. Recall that $T_6 = \inf\{ t \geq 0: W\!(t) = 6 \}$ and notice that $T_6 < \infty$ a.s., thus $X$ is compact a.s..
	
	Let $\sigma_1= 0$ and $\upsilon_1 = \inf\left\{t  > 0: W\!(t)= \frac{1}{2}\right\}$. Next, define
	\begin{align*}
		\sigma_{i+1} &= \inf\left\{t: t > \varsigma_i \ \textrm{and} \ W\!(t) = 0\right\};
		\\
		\varsigma_i&= \inf\left\{t: t > \sigma_{i} \ \textrm{and} \ W\!(t) = \frac{1}{2}\right\};
		\\
		\upsilon_{i+1} &= \inf\left\{t: t > \tau_{i} \ \textrm{and} \ W\!(t) = \frac{1}{2}\right\};
		\\
		\tau_i &= \inf\left\{t: t > \upsilon_{i} \ \textrm{and} \ W\!(t) = 1\right\}.
	\end{align*}
	
	To simplify notation, for any $i \in \mathbb{N}$ we denote 
	\begin{align*}
		W\!\!\left(\sigma_i, \varsigma_i, X\right)&=W\!\!\left(\left(\sigma_i, \varsigma_i\right]\right) \cap X \cap \mathbb{R} \times \left(0,\frac{1}{2}\right];
		\\
		W\!\!\left(\varsigma_i, \sigma_{i+1},  X\right)& =W\!\!\left(\left[\varsigma_i, \sigma_{i+1}\right)\right) \cap X \cap \mathbb{R} \times \left(0,\frac{1}{2}\right];
		\\
		W\!\!\left(\upsilon_i, \tau_i, X \right) & = W\!\!\left(\left(\upsilon_i, \tau_i\right]\right) \cap X \cap  \mathbb{R} \times \left(\frac{1}{2},1\right];
		\\
		W\!\!\left(\tau_i, \upsilon_{i+1} , X\right)& = W\!\!\left(\left[\tau_i, \upsilon_{i+1}\right)\right) \cap  X \cap \mathbb{R} \times \left(\frac{1}{2},1\right].
	\end{align*}   
	
	We first focus on $\left\{ W\!\!\left(\sigma_i, \varsigma_i, X\right) \right\}$ and $\left\{ W\!\!\left(\varsigma_i, \sigma_{i+1}, X\right) \right\}$. Note that the collection of sets 
	\begin{align}\label{elements}
	\left\{ W\!\!\left(\sigma_i, \varsigma_i, X\right) \right\} \cup \left\{ W\!\!\left(\varsigma_i, \sigma_{i+1}, X\right) \right\}
	\end{align}
  is composed of finitely many disjoint elements which cover $X \cap \mathbb{R} \times \left(0, \ \frac{1}{2}\right]$.
	
	There exists a number $N \in \mathbb{N}$ such that either $W\!\!\left(\varsigma_N, \sigma_{N+1}, X\right) \neq \emptyset$, and $W\!\!\left(\sigma_{N+1}, \varsigma_{N+1}, X\right) = \emptyset$; or $W\!\!\left(\varsigma_N, \sigma_{N}, X\right) \neq \emptyset$, and $W\!\!\left(\sigma_{N}, \varsigma_{N+1}, X\right) = \emptyset$. We may assume that the first possibility holds. The second case is handled similarly. Since $\pi_y\left(W\!\!\left(\varsigma_N, \sigma_{N+1}, X\right)\right)$ may not cover $\left(0, \ \frac{1}{2}\right]$, we consider its union with the element preceding it, i.e., $W\!\!\left(\sigma_N, \varsigma_N, X\right) \cup W\!\!\left(\varsigma_N, \sigma_{N+1}, X\right)$, and regard this union as one element. Finally, we denote the resulting collection of elements by $\mathcal{E}_1^1$ and order its elements from left to right according to their projections onto the $x$-axis and using an index $j$. Thus, $\mathcal{E}^1_1 = \left\{E^1_{1,j}\right\}$. More precisely,
	\[
	\mathcal{E}^1_1 = \left\{W\!\!\left(\sigma_1, \varsigma_1, X\right)\!, W\!\!\left(\varsigma_1, \sigma_{2}, X\right), \dots, W\!\!\left(\sigma_N, \varsigma_N, X\right) \cup W\!\!\left(\varsigma_N, \sigma_{N+1}, X\right)   \right\}.
	\]
	
	Similarly, we have finitely many disjoint $W\!\!\left(\upsilon_i, \tau_i, X\right)$'s and $W\!\!\left(\tau_i, \upsilon_{i+1}, X\right)$'s that cover $X \cap \mathbb{R} \times \left(\frac{1}{2}, \ 1\right]$. As above, we append the last one to the previous element, enumerate the elements, and denote the resulting collection of elements by $\mathcal{E}^2_1 = \left\{E^2_{1,j}\right\}$. It is clear that $\pi_y\left(E^2_{1,j} \right) = \left(\frac{1}{2},\ 1\right]$ for any $E^2_{1,j} \in \mathcal{E}^2_1$.
	
	We now continue the construction by induction on $n$. Suppose $E_{n,j}^m \subset \Gamma(W)$ is chosen. Then we have  $\pi_y\left(E_{n,j}^m \right) =\left(\frac{m-1}{2^n}, \frac{m}{2^n}\right]$. 
	
	Letting $\sigma_1\left(E_{n,j}^m\right) = \inf\left\{t: (t, W\!(t)) \in E_{n,j}^m\right\}$, we have
	\[
	W\!\!\left(\sigma_1\left(E_{n,j}^m\right) \right) = \frac{m-1}{2^n} \ \mathrm{or} \ \frac{m}{2^n}.
	\]
	
	Without loss of generality, we may assume that $W\!\!\left(\sigma_1\left(E_{n,j}^m\right) \right) = \frac{m-1}{2^n}$. Otherwise a symmetric construction could be done.
	
	Let $\upsilon_1\left(E_{n,j}^m\right)= \inf\left\{t > \sigma_1\left(E_{n,j}^m\right):  \ W\!(t) = \frac{2m-1}{2^{n+1}}\right\}$. Then define
	\begin{align*}
		\sigma_{i+1}\left(E_{n,j}^m\right) &= \inf\left\{t: t > \varsigma_i\left(E_{n,j}^m\right) \ \textrm{and} \ W\!\!\left(t\right) = \frac{m-1}{2^n}\right\};
		\\
		\varsigma_i\left(E_{n,j}^m\right) &= \inf\left\{t: t > \sigma_{i}\left(E_{n,j}^m\right) \ \textrm{and} \ W\!\!\left(t\right) = \frac{2m-1}{2^{n+1}}\right\};
		\\
		\upsilon_{i+1}\left(E_{n,j}^m\right) &= \inf\left\{t: t > \tau_{i}\left(E_{n,j}^m\right) \ \textrm{and} \ W\!\!\left(t\right) = \frac{2m-1}{2^{n+1}}\right\};
		\\
		\tau_i\left(E_{n,j}^m\right) &= \inf\left\{t: t > \upsilon_{i}\left(E_{n,j}^m\right) \ \textrm{and} \ W\!\!\left(t\right) = \frac{m}{2^n}\right\}.
	\end{align*}
	
	Similar to the notation used above for $\mathcal{E}_1^1$ and $\mathcal{E}_1^2$, we denote
	\begin{align*}
		W\!\!\left(\sigma_i, \varsigma_i, E^m_{n,j}\right)&=W\!\!\left(\left(\sigma_i\left(E^m_{n,j}\right), \varsigma_i\left(E^m_{n,j}\right)\right]\right) \cap E^m_{n,j} \cap \mathbb{R} \times \left(\frac{m-1}{2^n}, \frac{2m-1}{2^{n+1}}\right];
		\\
		W\!\!\left(\varsigma_i, \sigma_{i+1},  E^m_{n,j}\right)& =W\!\!\left(\left[\varsigma_i\left(E^m_{n,j}\right), \sigma_{i+1}\left(E^m_{n,j}\right)\right)\right) \cap E^m_{n,j} \cap \mathbb{R} \times \left(\frac{m-1}{2^n}, \frac{2m-1}{2^{n+1}}\right];
		\\
		W\!\!\left(\upsilon_i, \tau_i, E^m_{n,j} \right) &= W\!\!\left(\left(\upsilon_i\left(E^m_{n,j}\right), \tau_i\left(E^m_{n,j}\right)\right]\right) \cap E^m_{n,j} \cap  \mathbb{R} \times \left(\frac{2m-1}{2^{n+1}},\frac{m}{2^n}\right];
		\\
		W\!\!\left(\tau_i, \upsilon_{i+1}, E^m_{n,j}\right) &= W\!\!\left(\left[\tau_i\left(E^m_{n,j}\right), \upsilon_{i+1}\left(E^m_{n,j}\right)\right)\right) \cap  E^m_{n,j} \cap \mathbb{R} \times \left(\frac{2m-1}{2^{n+1}},\frac{m}{2^n}\right].
	\end{align*} 
	
	There are only finitely many elements in $\left\{ W\!\!\left(\sigma_i, \varsigma_i, E_{n,j}^m\right) \right\} \cup \left\{W\!\!\left(\varsigma_i, \sigma_{i+1}, E_{n,j}^m\right) \right\}$. These elements are disjoint and they cover $E_{n,j}^m \cap \mathbb{R} \times \left(\frac{m-1}{2^n}, \frac{2m-1}{2^{n+1}}\right]$. We append the last element to the one before it and denote $\mathcal{E}^{2m-1}_{n+1}=\left\{E^{2m-1}_{n+1,j}\right\}$.
	
	Just like above,  $\left\{W\!\!\left( \upsilon_i, \tau_i, E_{n,j}^m \right)\right\} \cup \left\{ W\!\!\left( \tau_i, \upsilon_{i+1}, E_{n,j}^m \right) \right\}$ contains finitely many elements. These elements are disjoint and they cover $E_{n,j}^m \cap \mathbb{R} \times \left(\frac{2m-1}{2^{n+1}},\frac{m}{2^n}\right]$. After appending the last element to the preceding one, we enumerate them by $\mathcal{E}^{2m}_{n+1} = \left\{E^{2m}_{n+1,j}\right\}$.
	
	\begin{figure}[htbp]
		\centering
		\includegraphics[height = 1.6 in]{BrownianMotion}
		\caption{An illustration of the first three generations of $\mathcal{E}$. The red horizontal lines divide the Brownian graph into elements of the generations.}
		\label{BMCS}
	\end{figure}
	
	We denote $\mathcal{E}^m_n = \left\{E_{n,j}^m\right\}$ and $\mathcal{E}_n = \bigcup_{m=1}^{2^n}\mathcal{E}^m_n$ and call these the collection of all the \emph{$n^{\mathrm{th}}$-generation elements at height $m$} of $X$ and the collection of \emph{all the $n^{\mathrm{th}}$-generation elements} of $X$, respectively. 
	
	Moreover, we define
	\[
	E^m_n = \bigcup_{E_{n,j}^m \in \mathcal{E}^m_n} E_{n,j}^m.
	\]
	
	Let
	\[
	\hat{\mathcal{E}}^m_n = \left\{E_{n,j}^m \in \mathcal{E}^m_n: \diam\left( \pi_x\left(E_{n,j}^m\right) \right) \geq \frac{1}{2^n}\frac{1}{n \log 2}\right\}
	\]
	and
	\[
	\mathcal{G}^m_n = \mathcal{E} \setminus \hat{\mathcal{E}}^m_n.
	\]
	Intuitively, $\hat{\mathcal{E}}^m_n$ is the collection of $n^{\mathrm{th}}$-generation elements at height $m$ with ``flat slope'', while $\mathcal{G}^m_n$ are the ones with ``steep slope''.
	
	Similarly, we denote by
	\[
	\hat{\mathcal{E}}_n = \bigcup_{m=1}^{2^n} \hat{\mathcal{E}}^m_n, \ \mathcal{G}_n = \bigcup_{m=1}^{2^n} \mathcal{G}^m_n
	\]
	and
	\[
	\hat{E}_n = \bigcup_{E_{n,j} \in \hat{\mathcal{E}}_n} E_{n,j}, \ G_n = \bigcup_{E_{n,j} \in \mathcal{G}_n} E_{n,j}.
	\]
	
	We define
	\[
	A' = \bigcap_{i=1}^\infty \bigcup_{n=i}^\infty \hat{E}_n = \limsup_{n \to \infty} \hat{E}_n.
	\]
	and
	\[
	A = \bigcup_{i=1}^\infty \bigcap_{n=i}^\infty G_n = \liminf_{n \to \infty} G_n
	\]
	Note that $A$ and $A'$ are the sets of points which are contained in infinitely many and finitely many elements with flat slopes, respectively. Thus $A \cup A' = X$ and $A \cap A' = \emptyset$. 
	
	We would like to show that $A'$ is a small subset of $X$. To that end we define
	\[
	S' = \left\{t \in [0, T_6]: \limsup_{a \to 0^+} \frac{T_{\scriptscriptstyle \pm a}(t)|\log a|}{a} > 1\right\}
	\]
	and
	\[
	S'_n = \left\{t \in [0, T_6]: T_{\scriptscriptstyle \pm \frac{1}{2^n}}(t) > \frac{1}{2^{n+1}} \frac{1}{(n+1)\log 2} \right\}.
	\]
	
	Then $\dim_H(S') \leq \frac{1}{2}$ a.s. by Lemma \ref{hitdim} . Moreover, for any $\eps > 0 $, a.s.
	\begin{equation}\label{bji}
		\Card\left( S'_n \cap \left[\frac{i}{2^{n+1}}, \frac{i+1}{2^{n+1}}\right] \neq \emptyset \right) \leq 2^{\left(\frac{1}{2}+\eps \right)(n+1)}
	\end{equation}
	for sufficiently large $n$ by Remark \ref{sufficientcover}.
	
	Next we state and prove several properties of the sets defined above which will be used in establishing the minimality of the vertical Cantor subsets of $\Gamma(W)$ which we define in Subsection \ref{Section:vertical-Cantor}.
	
	\begin{Proposition}\label{sdc}
	With the notation as above, we have $\dim_H(A') \leq \frac{1}{2}$ and $\mu(A') = 0.$
	\end{Proposition}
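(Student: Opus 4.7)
The plan is to prove the two assertions of Proposition \ref{sdc} separately: the Hausdorff dimension bound is the substantive step, and $\mu(A') = 0$ then follows from it together with the local upper mass bound (\ref{umassbound}).

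The strategy for $\dim_H A' \leq 1/2$ is first to show $\dim_H \pi_x(A') \leq 1/2$ by a slow-point argument modelled on Lemma \ref{hitdim}, and then to transfer the bound back to $A'$ using the nearly-Lipschitz behaviour of $W$ on the relevant time set. For the first step, fix $E \in \hat{\mathcal{E}}_n$ and let $J_E = (\sigma_i, \varsigma_i]$ (or one of the three symmetric construction intervals) be the time window of length at least $\frac{1}{2^n n \log 2}$ out of which $E$ was built. By the definitions of $\sigma_i, \varsigma_i, \upsilon_i, \tau_i$, throughout $J_E$ the Brownian motion $W$ is blocked by a distinguished dyadic level at scale $1/2^n$---for a ``lower-half-of-parent'' element, $W$ never exceeds the midline of the parent strip; for an ``upper-half-of-parent'' element, $W$ never drops below it. Since $W(t)$ is within $1/2^n$ of this blocking level whenever $t \in \pi_x(E)$, it follows that for every such $t$ either the forward one-sided hitting time $T_{+1/2^n}(t) \geq |J_E|/2 \geq \frac{1}{2^{n+1} n \log 2}$ (when $t$ lies in the first half of $J_E$) or the corresponding backward one-sided quantity $\inf\{h > 0 : W(t-h) = W(t) + 1/2^n\}$ satisfies the same bound (when $t$ lies in the second half), and symmetrically with $T_-$ in the upper-half case. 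Hence $\pi_x(\hat{E}_n)$ is contained in the union of four one-sided analogues of $S'_n$. Each of these sets admits the same covering estimate as Remark \ref{sufficientcover}: the forward one-sided tail $\mathbb{P}(T_{+a}(t_0) > \tau) \asymp a/\sqrt{\tau}$ is exactly what drives the bound in the original proof (since $\mathbb{P}(T_{\pm a} > \tau) \leq 2\mathbb{P}(T_{+a} > \tau)$ by reflection), and the backward versions have identical distributions because, for each fixed $t$, the process $s \mapsto W(t) - W(t-s)$ is itself distributed as a Brownian motion in $s$. Taking $\limsup_{n \to \infty}$ yields $\dim_H \pi_x(A') \leq 1/2$.

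For the transfer to $A'$, split $\pi_x(A')$ into $\{t : W(t) \text{ is non-dyadic}\}$ and its complement. On the first piece, for every $t$ the parity of the strip index at scale $n$ containing $W(t)$ is read off the $n$-th binary digit of $W(t)$, which takes both values infinitely often; hence $t$ lies in both ``lower-half-of-parent'' and ``upper-half-of-parent'' elements at infinitely many scales, and the corresponding one-sided bounds combine into the two-sided modulus $|W(t+h) - W(t)| \lesssim h \log(1/h)$ for all sufficiently small $|h|$. The graph map $t \mapsto (t, W(t))$ restricted to this piece is therefore log-Lipschitz and does not raise Hausdorff dimension, giving contribution at most $1/2$ to $\dim_H A'$. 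On the second piece, for each dyadic $a$ the set $A' \cap Z_a$ is isometric via $\pi_x$ to a subset of $\pi_x(A')$, hence of dimension at most $1/2$; the countable union over dyadic $a$ contributes at most $1/2$ as well. Combining, $\dim_H A' \leq 1/2$. Finally, $\mu(A') = 0$ follows by taking $\eps = 1/2$ in (\ref{umassbound}), stratifying $A' = \bigcup_{k \geq 1} A'_k$ with $A'_k = \{x \in A' : C(x) \leq k,\, r_0(x) \geq 1/k\}$, and applying a standard Vitali-covering argument to convert the uniform local bound $\mu(B(x,r) \cap A'_k) \leq k\, r$ (valid for $r < 1/k$) into $\mu(A'_k) \leq k\, \mathcal{H}^1(A'_k) = 0$, using $\dim_H A'_k \leq 1/2 < 1$.

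The principal obstacle is the first paragraph: one must carefully verify the one-sided hitting-time bound when $\pi_x(E)$ is disconnected (as happens whenever $W$ temporarily exits the strip of $E$ and returns within $J_E$), and check that the proofs of Lemma \ref{hitdim} and Remark \ref{sufficientcover} transfer to each of the four one-sided slow families without essential modification beyond adjusting the tail constants.
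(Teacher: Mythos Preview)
Your approach has a genuine gap in the first step: the covering estimate of Lemma \ref{hitdim} and Remark \ref{sufficientcover} does \emph{not} transfer to the one-sided slow families. The proof of Lemma \ref{hitdim} hinges on the implication ``if $t_0 \in [0,\tau]$ is a two-sided slow point at scale $a$, then the endpoint $\tau$ is a two-sided slow point at scale $2a$''; this uses the triangle inequality $|W(s)-W(\tau)| \le |W(s)-W(t_0)| + |W(\tau)-W(t_0)| < 2a$, which requires control on \emph{both} signs of $W-W(t_0)$. For the one-sided family $A_n^{+} = \{t : T_{+a_n}(t) > c_n\}$ (with $a_n=2^{-n}$, $c_n\asymp 2^{-n}/n$) this step fails, because $W(\tau)-W(t_0)$ can be arbitrarily negative. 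In fact the probability $\mathbb{P}(A_n^{+}\cap I \neq \emptyset)$ is bounded \emph{below} by a positive constant for every interval $I$ of length $c_n$: the argmax $t^\ast$ of $W$ on the doubled interval lies in $I$ with probability $1/2$ (arcsine law), and at $t^\ast$ one has $T_{+a_n}(t^\ast)\ge c_n$ trivially. Hence $\sum_I \mathbb{P}(A_n^{+}\cap I \neq \emptyset)$ is of the order of the number of intervals, and Lemma \ref{sdim} yields only $\dim_H(\limsup A_n^{+})\le 1$, which is useless. The same obstruction applies to the backward and $T_{-}$ variants.

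A secondary issue is that your identification of which one-sided bound holds is not governed by the upper/lower-half dichotomy but by whether the element is of type $(\sigma_i,\varsigma_i]$ versus $[\varsigma_i,\sigma_{i+1})$ (ascending-to-midline versus descending-from-midline); this undermines the ``both parities infinitely often'' combination in your transfer step. The paper avoids all of this by never passing to $\pi_x(A')$: it covers $\hat{E}_n$ directly by $n$-th generation dyadic \emph{squares} and counts them via the two-sided set $S'_n$. The key geometric fact (statement (3) in the proof) is that on any dyadic time-interval $I$ with $S'_n\cap I=\emptyset$ the Brownian oscillation exceeds $2^{-n}$, so $\pi_x(E_{n,j})$ cannot contain $I$ and must begin or end there; since $\pi_x(E_{n,j})$ has at most two components, this bounds the number of such ``exceptional'' intervals by a constant, and the remaining intervals are already counted by Remark \ref{sufficientcover}. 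Combined with the trivial observation that $\pi_y(E_{n,j})$ spans a single dyadic interval of height $2^{-n}$, this yields $\mathrm{Card}(\mathcal{D}_n)\le 2^{(1/2+\eps)n}$ and hence $\dim_H A'\le 1/2$ in one stroke, with no need for a separate Lipschitz transfer.
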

	
	\begin{proof}
		
		We first prove the following statements:
		
		Let $E_{n,j} \in \hat{\mathcal{E}}_n$, then
		\begin{enumerate}
			\item $\pi_x\left(E_{n,j}\right) \cap S'_n \neq \emptyset$;
			
			\item there exists a $C_1 > 0$ such that for any $\left[\frac{i}{2^n}, \frac{i+1}{2^n}\right]$ where $\pi_x\left(E_{n,j}\right) \cap \left[\frac{i}{2^n}, \frac{i+1}{2^n}\right] \neq \emptyset$, we can cover $E_{n,j} \cap \left[\frac{i}{2^n}, \frac{i+1}{2^n}\right] \times \mathbb{R}$ by at most $C_1$ many $n^{\mathrm{th}}$-generation dyadic squares;
			
			\item there exists a $C_2 > 0$ such that
			\[
			\Card\left(\left\{ \left[\frac{i}{2^n}, \frac{i+1}{2^n}\right]: \pi_x\left(E_{n,j}\right) \cap \left[\frac{i}{2^n}, \frac{i+1}{2^n}\right] \neq \emptyset, S'_n \cap \left[\frac{i}{2^n}, \frac{i+1}{2^n}\right] = \emptyset  \right\}\right) \leq C_2.
			\]
		\end{enumerate}
		
		Without loss of generality, we may assume that
		\[
		E_{n,j} = W\!\!\left(\sigma_N, \varsigma_N, E_{n-1,j}\right) \cup W\!\!\left(\varsigma_N, \sigma_{N+1}, E_{n-1,j}\right)
		\]
		for some $E_{n-1,j} \in \mathcal{E}_{n-1}$ and $N \in \mathbb{N}$. Otherwise, we can prove a symmetric situation with the same idea. Here we allow $W\!\!\left(\varsigma_N, \sigma_{N+1}, E_{n-1,j}\right)$ to be empty.
		
		Since $E_{n,j} \in \hat{\mathcal{E}}_n$, then either
		\[
		\diam\left( \pi_x\left( W\!\!\left(\sigma_N, \varsigma_N, E_{n-1,j}\right) \right)\right) \geq \frac{1}{2^{n+1}}\frac{1}{n \log 2}
		\]
		or
		\[
		\diam\left( \pi_x\left(W\!\!\left(\varsigma_N, \sigma_{N+1}, E_{n-1,j}\right) \right) \right) \geq \frac{1}{2^{n+1}}\frac{1}{n \log 2}.
		\]
		Therefore $(1)$ holds.
		
		To prove (2), let $\left[\frac{i}{2^n}, \frac{i+1}{2^n}\right]$ be a $n^{\mathrm{th}}$-generation dyadic interval where $\pi_x\left(E_{n,j}\right) \cap \left[\frac{i}{2^n}, \frac{i+1}{2^n}\right] \neq \emptyset$. Recall that $\pi_y(E_{n,j})$ is a $n^{\mathrm{th}}$-generation dyadic interval (with one end open). This implies that we can cover $E_{n,j} \cap \left[\frac{i}{2^n}, \frac{i+1}{2^n}\right] \times \mathbb{R}$ by two $n^{\mathrm{th}}$-generation dyadic square and we finish the proof of statement $(2)$.
		
		Let $\left[\frac{k}{2^n}, \frac{k+1}{2^n}\right]$ be a $n^{\mathrm{th}}$-generation dyadic interval  such that $\pi_x\left(E_{n,j}\right) \cap \left[\frac{k}{2^n}, \frac{k+1}{2^n}\right] \neq \emptyset$ and $S'_n \cap \left[\frac{k}{2^n}, \frac{k+1}{2^n}\right] = \emptyset$.  Since  $S'_n \cap \left[\frac{k}{2^n}, \frac{k+1}{2^n}\right] = \emptyset$, we have
		\[
		\max \left\{\left|W\!(t)-W\!(s)\right|: t,s \in \left[\frac{k}{2^n}, \frac{k+1}{2^n}\right]   \right\} > \frac{1}{2^n}.
		\]
		This implies that $ \left[\frac{k}{2^n}, \frac{k+1}{2^n}\right] \subsetneq \pi_x\left(E_{n,j}\right)$. Intuitively, $\pi_x\left(E_{n,j}\right)$ doesn't contain this interval since the Brownian motion there has larger ``height'' than that of $E_{n,j}$. In fact, $\pi_x\left(E_{n,j}\right)$ should start or stop in this interval since $\pi_x\left(E_{n,j}\right)$ intersect this interval and doesn't contain it.
		
		Since $\pi_x\left(E_{n,j}\right)$ is one interval or the union of two intervals and $\pi_x\left(E_{n,j}\right)$ should start or stop in $\left[\frac{k}{2^n}, \frac{k+1}{2^n}\right] $, then
		\[
		\Card\left(\left\{ \left[\frac{k}{2^n}, \frac{k+1}{2^n}\right]: \pi_x\left(E_{n,j}\right) \cap \left[\frac{k}{2^n}, \frac{k+1}{2^n}\right] \neq \emptyset, S'_n \cap \left[\frac{k}{2^n}, \frac{k+1}{2^n}\right] = \emptyset  \right\}\right) \leq C_2
		\]
		for some $C_2 > 0$. This finished the proof of statement $(3)$.
		
		Let $\mathcal{D}_n$ be the collection of all the $n^{\mathrm{th}}$-generation dyadic squares that intersect $\hat{E}_n$. Combining with all the above conditions, we conclude that
		\begin{equation}
			\Card\left(\mathcal{D}_n\right) \leq C \cdot \Card\left( S'_n \cap \left[\frac{i}{2^{n+1}}, \frac{i+1}{2^{n+1}}\right] \neq \emptyset \right)
		\end{equation}
		for some $C = C(C_1, C_2)> 0$.
		
		Then, by inequality \eqref{bji}, for any $\eps > 0 $, a.s.
		\begin{equation}\label{sje}
			\Card\left( \mathcal{D}_n\right) \leq 2^{\left(\frac{1}{2}+\eps \right)(n+1)}
		\end{equation}
		for sufficiently large $n$ . Thus
		\[
		\mathcal{H}^{\frac{1}{2}+2\eps}\left(A'\right) \lesssim \lim_{n \to \infty} \sum_{l=n}^{\infty}2^{\left(\frac{1}{2}+\eps \right)(l+1)}2^{-(\frac{1}{2}+2\eps)l} = 0.
		\]
		This implies that $\dim_H(A') \leq \frac{1}{2}$ a.s..
		
		Since $\dim_H(A') \leq \frac{1}{2}$ a.s., for any $\delta > 0$, there exists a countable sequence of balls $\left\{B(x_i,r_i) \right\}$ covering $A'$ such that  $\sum_{i=1}^{\infty} r_i < \delta$.
		
		It follows from Remark \ref{smallradius} that
		\[
		\mu\left( A' \right) \leq \sum_{i=1}^{\infty}\mu\left( B(x_i,r_i) \right) \lesssim \sum_{i=1}^{\infty} r_i \lesssim \delta.
		\]
		This finishes the proof.
	\end{proof}
	
	\begin{Remark}\label{secontrol}
		Since for any $E_{n,j} \in \hat{\mathcal{E}}_n$
		\[
		\diam\left( \pi_x\left(E_{n,j}\right) \right) \geq \frac{1}{2^n}\frac{1}{n \log 2},
		\]
		we have that any $n^{\mathrm{th}}$-generation dyadic square can intersect at most $[\log 2^n]+1$ many elements from $\hat{\mathcal{E}}_n$. It then follows from inequality \eqref{sje} that for any $\eps > 0 $, a.s.
		\begin{equation}
			\Card\left( \hat{\mathcal{E}}_n \right) \leq 2^{\left(\frac{1}{2}+\eps \right)(n+1)}
		\end{equation}
		for sufficiently large $n$.
	\end{Remark}
	
	\begin{Lemma}\label{gecei}
		Almost surely, for any $E_{n,j} \in \mathcal{E}_n$,
		\[
		\pi_y(A \cap E_{n,j}) = \pi_y\left(E_{n,j}\right)
		\]
		$\mathcal{L}^1$-almost everywhere.
	\end{Lemma}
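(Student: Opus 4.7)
The plan is to reduce the claim to the Hausdorff dimension estimate already established in Proposition \ref{sdc}. Since $A\cup A' = X$ and $A\cap A'=\emptyset$ by construction, each $E_{n,j}\in\mathcal{E}_n\subset X$ admits the disjoint decomposition $E_{n,j}=(A\cap E_{n,j})\cup(A'\cap E_{n,j})$, and projecting onto the $y$-axis yields
\[
\pi_y(E_{n,j}) \;=\; \pi_y(A\cap E_{n,j})\,\cup\,\pi_y(A'\cap E_{n,j}).
\]
Hence it suffices to show that $\mathcal{L}^1\bigl(\pi_y(A'\cap E_{n,j})\bigr)=0$ almost surely, simultaneously for all (countably many) pairs $(n,j)$.

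Next I would invoke that the coordinate projection $\pi_y\colon\mathbb{R}^2\to\mathbb{R}$ is $1$-Lipschitz and therefore does not increase Hausdorff dimension. Proposition \ref{sdc} provides a single almost sure event on which $\dim_H A'\le 1/2$, and on that event, for every $(n,j)$ we have
\[
\dim_H\bigl(\pi_y(A'\cap E_{n,j})\bigr)\le \dim_H(A'\cap E_{n,j})\le \dim_H(A')\le \tfrac12<1.
\]
This forces $\mathcal{H}^1\bigl(\pi_y(A'\cap E_{n,j})\bigr)=0$, and in particular $\mathcal{L}^1\bigl(\pi_y(A'\cap E_{n,j})\bigr)=0$. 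Combined with the decomposition in the first step, this gives $\pi_y(A\cap E_{n,j})=\pi_y(E_{n,j})$ modulo $\mathcal{L}^1$-null sets.

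I do not anticipate a real obstacle: the substance of Lemma \ref{gecei} is entirely contained in the bound $\dim_H A'\le 1/2$ from Proposition \ref{sdc}, and the remainder is just the observation that Lipschitz maps do not increase Hausdorff dimension together with the fact that a subset of $\mathbb{R}$ of dimension strictly less than $1$ is Lebesgue-null. The only minor point to check is that the exceptional null event should not depend on $(n,j)$, which is automatic since $\{E_{n,j}\}_{n,j}$ is a countable family and we use the single full-probability event on which $\dim_H A'\le 1/2$.
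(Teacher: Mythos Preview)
Your proof is correct and considerably more direct than the paper's.  The paper argues via local time: it shows that for every $a\in(0,1)$ the set $A'$ carries at most half of the local-time mass $l^a$ on $\Gamma(W([0,T_1]))\cap Z_a$, by combining Ray's theorem (Theorem~\ref{ltv}) with the cardinality bound $\Card(\hat{\mathcal{E}}_n)\le 2^{(2/3)n}$ from Remark~\ref{secontrol}, and then invokes the strong Markov property to pass to a general $E_{n,j}$.  Your route bypasses all of this by observing that $\dim_H A'\le \tfrac12$ is already available from Proposition~\ref{sdc} and that the $1$-Lipschitz projection $\pi_y$ cannot raise Hausdorff dimension, so $\pi_y(A')$ is Lebesgue-null.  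In fact your argument delivers more than the lemma asks: since $\mathcal{L}^1(\pi_y(A'))=0$, for almost every $a$ one has $A'\cap Z_a=\emptyset$, whence $l^a(A\cap E_{n,j}\cap Z_a)=l^a(E_{n,j}\cap Z_a)$ for a.e.\ $a$ and every $E_{n,j}$.  This is a strict sharpening of Remark~\ref{cover} (which the paper extracts from its longer proof and needs in the final modulus estimate), with the constant $\tfrac12$ improved to $1$.
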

	
	\begin{proof}
		Note that by the Strong Markov Property \ref{Strong Markov}, it is enough to show that almost surely we have
		\begin{align}\label{inclusion}
			\pi_y\left(A \cap [0, T_1] \times (0,1]\right) \supset (0,1).
		\end{align}
		
		We first show that a.s. for any $a \in (0,1)$,
		\begin{equation}\label{goodmeasure}
			l^a(A \cap \Gamma(W\!([0, T_1]))\cap Z_a) \geq \frac{1}{2} l^a\left(\Gamma(W\!([0, T_1])) \cap Z_a\right).
		\end{equation}
		
		We denote by $\mathcal{E}_n(a) = \{E_{n,j}(a)\}$ the subcollection of $\mathcal{E}_n$ such that
		\begin{enumerate}
			\item $E_{n,j}(a) \subset \Gamma(W\!(0, T_1))$,
			
			\item $a  \in \pi_y\left( E_{n,j}(a) \right)$,
		\end{enumerate}
		In other words, $\mathcal{E}_n(a)$ is the collection of all the $n^{\mathrm{th}}$-generation elements in $[0, T_1] \times [0,1]$ that intersect $Z_a$. Thus we have $\pi_y\left( E_{n,i}(a) \right) = \pi_y\left( E_{n,j}(a) \right), \ \forall \ E_{n,i}(a), E_{n,j}(a) \in \mathcal{E}_n(a)$.
		
		Recall that $D_n(a, T_1)$ is the number of downcrossings before time $T_1$ of the unique $n^{\mathrm{th}}$-generation dyadic interval $(i2^{-n} , (i+ 1)2^{-n}]$ containing $a$, thus
		\begin{equation}\label{elementlt}
			\frac{1}{2}D_n\left( a, T_1\right) \leq \Card\left( \mathcal{E}_n(a)\right) \leq 2 D_n\left( a, T_1\right).
		\end{equation}
		
		For sufficiently large $i$, a.s.
		\[
		\Card\left( \hat{\mathcal{E}}_n \right) \leq 2^{\left(\frac{2}{3} \right)n}
		\]
		by Remark \eqref{secontrol}.
		
		Theorem \ref{ltv} implies that a.s. for any $a \in (0,1)$, $l^a\left(\Gamma(W\!(0, T_1)) \cap Z_a\right) > \delta$ for some random $\delta = \delta(a, \omega)$.
		
		Therefore, there exists $N = N(a, \omega)>0$ such that
		\begin{equation}\label{ltcontrol}
			\sum_{n=N}^\infty\frac{\Card\left( \hat{\mathcal{E}}_n \cap \mathcal{E}_n(a) \right)}{2^{n-1}} \leq \sum_{n=N}^\infty\frac{\Card\left( \hat{\mathcal{E}}_n\right)}{2^{n-1}} \leq \sum_{n=N}^\infty\frac{2^{\left(\frac{2}{3} \right)n}}{2^{n-1}} < \frac{1}{4}\delta.
		\end{equation}
		
		Since $A' \subset \bigcup_{n = N}^\infty \bigcup_{E_{n,j} \in \hat{\mathcal{E}}_n}E_{n,j}$, we have a.s. for any $a \in (0,1)$,
		\begin{align*}
			l^a(A' \cap \Gamma(W\!([0, T_1]))\cap Z_a) & \leq \lim_{n \to \infty}\frac{2\Card\left(\hat{\mathcal{E}}_n \cap \mathcal{E}_n(a)\right)}{2^{n-1}}
			\\
			& < \frac{\delta}{2} < \frac{1}{2} l^a(\Gamma(W) \cap [0, T_1] \times [0,1] \cap Z_a).
		\end{align*}
		The first inequality above comes from the definition of local time and inequality \eqref{elementlt}. This implies that inequality \eqref{goodmeasure} is valid. 
		
		It follows from Theorem \ref{ltv} that
		\[
		l^a(A \cap \Gamma(W\!([0, T_1]))\cap Z_a) \geq \frac{1}{2} l^a\left(\Gamma(W\!([0, T_1])) \cap Z_a\right) > 0.
		\]
		Then a.s. for any $a \in (0,1)$, $A \cap \Gamma(W\!([0, T_1]))\cap Z_a \neq \emptyset$, which gives \eqref{inclusion}.
	\end{proof}
	
	\begin{Remark}\label{cover}
		
		Applying Strong Markov Property \ref{Strong Markov} to inequality \eqref{goodmeasure}, we have the following result: Almost surely for any  $E_{n,j} \in \mathcal{E}_n$,
		\[
		l^a(A \cap E_{n,j} \cap Z_a) \geq \frac{1}{2} l^a(E_{n,j} \cap Z_a)
		\]
		for a.e. $a \in \pi_y(E_{n,j})$.
	\end{Remark}
	
	\subsection{Vertical Cantor sets in the graph $\Gamma(W)$}\label{Section:vertical-Cantor}
	
	Next, we construct a family of \emph{vertical Cantor sets} $\mathcal{E}$ in $X$ as follows. We choose one element in $\mathcal{E}^1_1$ and another element in $\mathcal{E}^2_1$.  The union of these two elements is the \emph{first generation} of $E$ and we denote it by $E_1$. Suppose the $n^{\mathrm{th}}$-generation $E_n$ of $E$ is constructed and it is a union of $2^n$ disjoint elements from $\mathcal{E}_n$ whose projection to $y$-axis covers $(0,1]$. For each $E_{n,j}$ in $E_n$, we choose two disjoint elements from $\mathcal{E}_{n+1}$ that are subsets of $E_{n,j}$ and their projections to the $y$-axis coincide with $\pi_y(E_{n,j})$. The union of all the $2^{n+1}$ elements selected as above forms the \emph{$(n+1)^{\mathrm{th}}$-generation} $E_{n+1}$ of $E$.
	Define a vertical Cantor set $E$ by setting
	\begin{align}
	E= A \cap \left(\bigcap_{n=1}^\infty E_n\right) = A \cap \left(\bigcap_{n=1}^{\infty} \bigcup_{j=1}^{2^n} E_{n,j}\right).
	\end{align}
   
	We denote by  $\mathcal{E}$ the collection of all vertical Cantor sets in $A$ constructed as above.
	%
	%
	Observe that $ E \in \mathcal{E}$ may not be a metric space with hierarchical structure as defined in Section \ref{CDM}, since a nested sequence of $\left\{A \cap E_{n,j}\right\}$ could have an empty intersection.
	
	The reason for defining $E$ as a subset of $A$ is to have certain properties which would facilitate the application of the general theory of Section \ref{CDM} to the vertical Cantor sets. The rest of this subsection is devoted to establishing these properties.
	
	\begin{Lemma}\label{vcurve}
		For any $E \in \mathcal{E}$, $E \cap Z_a$ contains only one point for a.e. $a \in [0,1]$.
	\end{Lemma}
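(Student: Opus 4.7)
The plan is to establish both that $|E \cap Z_a| \leq 1$ for every non-dyadic $a$ and that $|E \cap Z_a| \geq 1$ for Lebesgue-a.e.\ $a$, thereby producing exactly one point almost everywhere. The uniqueness step is the heart of the matter. For $a \in (0,1)$ avoiding the countable set of dyadic rationals, the projections $\{\pi_y(E_{n,j}) : E_{n,j} \in E_n\}$ partition $(0,1]$ into $2^n$ half-open dyadic intervals of length $2^{-n}$, so exactly one element $E_{n,j(n)}^{m(n)} \in E_n$ satisfies $a \in \pi_y(E_{n,j(n)}^{m(n)})$. Suppose $(t_1,a), (t_2,a) \in E \cap Z_a$. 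Then both points belong to $E_{n,j(n)}^{m(n)}$ for every $n$, and since $E \subseteq A = \liminf_n G_n$, for all sufficiently large $n$ the element $E_{n,j(n)}^{m(n)}$ lies in $\mathcal{G}_n$. Consequently
\[
|t_1 - t_2| \leq \diam\bigl(\pi_x(E_{n,j(n)}^{m(n)})\bigr) < \frac{1}{2^n\, n \log 2},
\]
which on sending $n \to \infty$ forces $t_1 = t_2$.

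For the existence half, set $B_n = \bigcup \{\pi_y(E_{n,j}) : E_{n,j} \in E_n \cap \hat{\mathcal{E}}_n\}$, a finite union of dyadic intervals of length $2^{-n}$, so $\mathcal{L}^1(B_n) \leq \Card(\hat{\mathcal{E}}_n) \cdot 2^{-n}$. By Remark~\ref{secontrol}, almost surely $\Card(\hat{\mathcal{E}}_n) \leq 2^{(1/2+\varepsilon)(n+1)}$ for any fixed $\varepsilon \in (0,1/2)$ and all large $n$, so $\sum_n \mathcal{L}^1(B_n) < \infty$ and Borel--Cantelli yields $\mathcal{L}^1(\limsup_n B_n) = 0$. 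For every $a$ outside this null set and outside the dyadic rationals, $E_{n,j(n)}^{m(n)} \in \mathcal{G}_n$ for all sufficiently large $n$. For such $a$ the sets $E_{n,j(n)}^{m(n)} \cap Z_a$ form a decreasing chain of non-empty compact subsets of $Z_a$; the finite intersection property produces a point $p^* = (t^*,a) \in \bigcap_n E_{n,j(n)}^{m(n)}$, whose unique level-$n$ containing element is precisely $E_{n,j(n)}^{m(n)}$, which lies in $\mathcal{G}_n$ eventually, hence $p^* \in A$ and $p^* \in E \cap Z_a$.

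The main technical nuisance is that each $E_{n,j}^m$ is built from the half-open strip $\bigl((m-1)/2^n,\, m/2^n\bigr]$ and so is not literally closed in $\Gamma(W)$, which threatens to disrupt the compactness argument in the existence step. However, restricting to $a$ strictly interior to its height strip at every level — which removes only the countable null set of dyadic heights — makes $E_{n,j}^m \cap Z_a$ equal to the intersection of the corresponding graph piece over a compact time interval with the closed line $Z_a$, hence compact. A related subtlety is that the control on $\Card(\hat{\mathcal{E}}_n)$ supplied by Remark~\ref{secontrol} is only almost sure, but this is fine because the whole construction of $\mathcal{E}$ takes place on the almost-sure event on which the estimates of Section~\ref{verticalCS} are valid.
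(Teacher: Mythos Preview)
Your proof is correct. The uniqueness half is essentially the paper's argument made explicit: the paper simply asserts that for two distinct points $p,q\in E\cap Z_a$ one can find $n$ with $\diam(E_{n,j})<|p-q|$ ``by our construction'', while you spell out why---namely, $p\in E\subseteq A$ forces the containing element to lie in $\mathcal G_n$ eventually, so its $x$-diameter is below $2^{-n}/(n\log 2)$.

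The existence half is where you genuinely diverge. The paper dispatches non-emptiness in one line by citing Lemma~\ref{gecei}, whose proof uses Ray's theorem on local time to show $l^a(A\cap E_{n,j}\cap Z_a)>0$. You instead bypass local time entirely: you use only the cardinality bound $\Card(\hat{\mathcal E}_n)\le 2^{(1/2+\varepsilon)(n+1)}$ from Remark~\ref{secontrol}, run Borel--Cantelli on the heights at which the chosen level-$n$ element is ``flat'', and then extract a point by a nested-compact-sets argument. This is more elementary and self-contained---it needs none of the local time apparatus---and it also makes transparent exactly which null set of heights must be discarded. The paper's route, by contrast, folds the work into the earlier Lemma~\ref{gecei}, which is needed elsewhere anyway, so there is no net economy in your version for the paper as a whole; but as a stand-alone proof of this lemma yours is cleaner. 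Your care with the half-open strips and the compactness of $E^m_{n,j}\cap Z_a$ for non-dyadic $a$ is also a genuine point the paper leaves implicit.
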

	
	\begin{proof}
		We first notice that $E \cap Z_a \neq \emptyset$ for any $E \in \mathcal{E}$ and a.e. $a \in (0,1]$ by Lemma \ref{gecei}.
		
		Let $a \in (0,1]$ and assume that $p,q \in E \cap Z_a$. Then there exists a number $n \in \mathbb{N}$ such that $p \in E_{n,j}$ for some $E_{n,j} \in \mathcal{E}_n$ and $\diam\left(E_{n,j}\right) < |p-q|$ by our construction. This implies that $q \in E_{n,i}$ for some $E_{n,i} \in \mathcal{E}_{n}$ and $E_{n,j} \neq E_{n,i}$. Recall that $\pi_y\left(E_{n,j}\right) \cap \pi_y\left(E_{n,i}\right) = \emptyset$ when $E_{n,j} \neq E_{n,i}$ and both of them are in the $n^{\mathrm{th}}$-generation of $E$. This contradicts our assumption that  $p,q \in E \cap Z_a$ and therefore finishes the proof.
	\end{proof}
	
	Lemma \ref{vcurve} implies that the restriction of $\pi_y$ to every $E$ is injective. Moreover, $\pi_y$ is also  surjective on a full measure subset of $[0,1]$ by Lemma \ref{gecei}.  Therefore,  we can define a measure $\lambda_{E}$ by pushing forward the Lebesgue measure from $\{0\}\times [0,1]$ to $E$, i.e., $\lambda_{E}=(\pi_y^{-1})_*(\mathcal{L}^1)$,  or more concretely for every measurable $U \subset E$ we have
	\begin{align*}
		\lambda_{E}(U)=\mathcal{L}^1(\pi_y(U)).
	\end{align*}
	We will denote $\mathbf{E}=\{\lambda_E\}_{E\in\mathcal{E}}$, i.e., the family of pull backs of the Lebesgue measure under $\pi_y$ to all the vertical Cantor sets in $\mathcal{E}$.
	
	\begin{Lemma}\label{dimcgood}
		$\dim_C\left(E\right) \geq 1$ for any $E \in \mathcal{E}$.
	\end{Lemma}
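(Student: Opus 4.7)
The plan is to apply one of the hierarchical minimality criteria established in Section \ref{CDM}, most likely Theorem \ref{unioncd1} or \ref{unioncd1new}. The family $\{E_{n,j}\}$ endows $E$ with a natural hierarchical structure (two children and one sibling at each generation determined by the vertical Cantor set construction), but the $E_{n,j}$ are not themselves subsets of $E$ because $E = A \cap \bigcap_n E_n$. To remedy this I would decompose $A$ using $A_i = \bigcap_{n \geq i} G_n$, so that $A = \bigcup_i A_i$ and $E = \bigcup_i E^i$, where $E^i = A_i \cap \bigcap_n E_n$. Endowing each $E^i$ with the hierarchical structure $E^i_{n,j} = E^i \cap E_{n,j}$ (which now does satisfy $E^i_{n,j} \subseteq E^i$) fits the setup of Theorem \ref{unioncd1new}.

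The main geometric verifications come from the defining property of $A_i$. If $x \in E^i$ and $n \geq i$, then $x \in G_n$, so $E_n(x) \in \mathcal{G}_n$ and hence has $x$-extent at most $1/(2^n n\log 2)$ and $y$-extent $1/2^{n+1}$, giving $\diam(E_n(x)) \asymp 2^{-n}$. Provided the sibling $(E^i_n)'(x)$ is nonempty, it contains a point of $A_i \subseteq G_n$, which forces the full element $E'_n(x)$ to belong to $\mathcal{G}_n$ as well; the same argument applied to the parent shows $\tilde E_{n-1}(x)\in\mathcal{G}_{n-1}$ for $n \geq i+1$. These three facts yield bounded diameter ratios (both siblings have planar diameters comparable to $2^{-n}$) and, more importantly, control the relative distance: because siblings lie in adjacent $y$-halves of the parent's $y$-strip, they meet the common level $y = (2m-1)/2^{n+1}$, so the distance between them is bounded by the $x$-extent of the parent, which is $O(2^{-n}/n)$. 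Dividing by $\diam\geq 2^{-n-1}$ gives $\Delta(E^i_n(x), (E^i_n)'(x)) = O(1/n) \to 0$. Flatness is essentially automatic from the dyadic partitioning of the $y$-axis by sibling pairs.

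The main obstacle is to verify that for each $x \in E^i$ and all sufficiently large $n$ the sibling $(E^i_n)'(x)$ is nonempty, which forces the entire sibling element $E'_n(x)$ to be in $\mathcal{G}_n$ and unlocks the estimates above. This is the step where the construction of $E$ as a subset of $A$ becomes essential, and where Lemma \ref{gecei} must be leveraged: since $\pi_y(A \cap E_{n,j}) = \pi_y(E_{n,j})$ almost everywhere, $A$ intersects both $y$-halves of every parent, so in building $E$ we may arrange that both children at each generation contain points of $A$, and one checks that this persists to the restricted family $A_i$ for $n$ sufficiently large relative to $i$. Once these conditions are in place, Theorem \ref{unioncd1new} (applied to the collection $\{E^i\}_{i\geq 1}$, whose union is $E$) gives $\dim_C(E) \geq 1$.
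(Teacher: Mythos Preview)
Your overall plan---decompose $A=\bigcup_i A_i$ with $A_i=\bigcap_{n\ge i}G_n$, put a hierarchical structure on each piece, use the steep-slope bound $\diam(\pi_x(E_{n,j}))<1/(2^n n\log 2)$ to make the relative distance of siblings $O(1/n)$, and then invoke one of the union theorems from Section~\ref{CDM}---is exactly the route the paper takes, and your geometric estimates are the same ones that appear there.

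There is, however, a genuine gap in the way you carry it out. The sets $E_{n,j}$ are intersected with half-open strips $\mathbb{R}\times(a,b]$, so neither $A$ nor $A_i$ is closed, and a nested sequence of nonempty sets $E^i\cap E_{n,j_n}$ need not have nonempty intersection; this is exactly the obstruction the paper records just before the lemma. The paper repairs this by first replacing each $E_{n,j}$ by a \emph{compact} subset $F_{n,j}$ with $\diam F_{n,j}\ge\diam E_{n,j}-128^{-n}$, and building the analogue $F^N$ of your $E^i$ from these; compactness of $F^N$ and of each $F_{n,j}$ then gives the hierarchical structure for free, and the tiny diameter loss is absorbed into the estimates. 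A second point: the paper applies Theorem~\ref{unioncd1}, not Theorem~\ref{unioncd1new}. Your ``main obstacle''---that $(E^i_n)'(x)$ be nonempty for every $n$---is exactly what Theorem~\ref{unioncd1new} requires via condition~\eqref{scdiam}, and Lemma~\ref{gecei} only guarantees $A\cap E'_n(x)\neq\emptyset$, not $A_i\cap E'_n(x)\neq\emptyset$ for a \emph{fixed} $i$. Theorem~\ref{unioncd1} sidesteps this because its hypotheses~\eqref{ssdist}--\eqref{ssudiam} concern only $\lim_{N\to\infty}\diam((F^N_n)'(x))$ and $\lim_{N\to\infty}\Delta$, and those limits are controlled via the full union $F=\bigcup_N F^N$, which does meet every $F_{n,j}$.
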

	
	\begin{proof}
		For any $E_{n,j} \in \mathcal{E}_n$, we denote by $F_{n,j}$ a compact subset of $E_{n,j}$ such that
		\[
		\diam(F_{n,j}) \geq \diam(E_{n,j}) - \frac{1}{128^n}.
		\]
		Recall that $A = \bigcup_{i=1}^\infty \bigcap_{n=i}^\infty G_n$, where $G_n = \bigcup_{E_{n,j} \in \mathcal{G}_n} E_{n,j}$. Let $F_n = \bigcup_{E_{n,j} \in \mathcal{G}_n} F_{n,j}$. Then we denote $K = \bigcup_{i=1}^\infty \bigcap_{n=i}^\infty F_n$ and $K^N = \bigcup_{i=1}^N \bigcap_{n=1}^\infty F_n$.
		
		Let $E \in \mathcal{E}$ where $E = A \cap \left( \bigcap_{n=1}^\infty \bigcup_{j=1}^{2^n} E_{n,j} \right)$. We define $F = K \cap \left( \bigcap_{n=1}^\infty \bigcup_{j=1}^{2^n} F_{n,j} \right)$ and $F^N = K^N \cap \left( \bigcap_{n=1}^\infty \bigcup_{j=1}^{2^n} F_{n,j} \right)$. Thus $\bigcup_{N=1}^\infty F^N \subseteq F \subseteq E$.
		
		Since $F^N$ and $F_{n,j}$ are compact for any $n,j$ and $N$, the intersection of a nested sequence of $\left\{F^N \cap F_{n,j}\right\}$ is nonempty. Letting $F^N_{n,j} = F^N \cap F_{n,j}$, then
		\[
		F^N =  \bigcap_{n=1}^\infty \bigcup_{j=1}^{2^n} F^N_{n,j} 
		\]
		is a metric space with hierarchical structure. 
		
		Let $x \in F^N(x)$ and $A(x)$ be the flatness constant of $x$ in $F^N$. Notice that for any disjoint $F^N_{n,j}$ and $F^N_{m,l}$,
		\[
		\diam\left(F^N_{n,j} \cup F^N_{m,l}\right) \geq \diam\left(\pi_y\left(F^N_{n,j}\right)\right) + \diam\left(\pi_y\left(F^N_{m,l}\right)\right).
		\]
		Then the construction of $F^N$ and the disjointedness of $F_{n,j}$ implies that $A(x) \leq 2$.
		
		Notice that $\diam\left( \pi_y\left(A \cap E_{n,j}\right) \right) = \frac{1}{2^n}$ by Lemma \ref{gecei}. Thus by the definition of $F_{n,j}$, we have
		\[
		\diam\left( \pi_y(K \cap F_{n,j}) \right) \geq \frac{1}{2^n} - \frac{1}{16^n}
		\]
		and
		\[
		\diam\left( \pi_y(F \cap F_{n,j}) \right) \geq \frac{1}{2^n} - \frac{1}{8^n}.
		\]
		This implies that
		\begin{equation*}
			\lim_{i \to \infty}\lim_{N \to \infty}\Delta(F^N_{i}(x), (F^N_{i})'(x)) \leq \lim_{i \to \infty} \frac{2}{i \log 2} = 0
		\end{equation*}
		and
		\begin{equation}
			\frac{1}{2} \leq \lim_{N \to \infty}\frac{\diam(F^N_i(x))}{\diam((F^N_i)'(x))} \leq 2.
		\end{equation}
		for $i$ sufficiently large.
		
		Applying Theorem \ref{unioncd1} to $F^N$, we have $\dim_C(E) \geq \dim_C(F) \geq 1$.
	\end{proof}
	
	\begin{Lemma}\label{linear1}
		Let $E \in \mathcal{E}$ and $\lambda_{E} \in \mathbf{E}$. Then for any $x \in E$, there exists $r_1 = r_1(x)$ such that whenever $r < r_1$,  we have
		\begin{equation}\label{eqn:lower-bound}
			\lambda_E\left(B(x,r) \cap E \right) \geq \frac{1}{3}r.
		\end{equation}
	\end{Lemma}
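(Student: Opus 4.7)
The plan is to exploit the hierarchical structure of $E$ together with the fact that $x \in E \subset A$ places $x$ inside a ``steep-slope'' element at every sufficiently large generation. Let $E_{n,j(n)}$ denote the unique $n$-th generation element of the vertical Cantor set $E$ that contains $x$. Since $x \in A = \liminf_{n \to \infty} G_n$, there exists $N = N(x)$ such that $E_{n,j(n)} \in \mathcal{G}_n$ for every $n \geq N(x)$, which forces $\diam(\pi_x(E_{n,j(n)})) < \frac{1}{2^n n \log 2}$. Combined with $\diam(\pi_y(E_{n,j(n)})) = \frac{1}{2^n}$ this yields $\diam(E_{n,j(n)}) < \frac{1}{2^n}\bigl(1 + \frac{1}{n \log 2}\bigr)$.

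Given $r < r_1(x)$ sufficiently small, I would set $n = \lfloor \log_2(3/r) \rfloor$, so that $\tfrac{r}{3} \leq \tfrac{1}{2^n} < \tfrac{2r}{3}$. Choosing $r_1(x)$ small enough to guarantee $n \geq \max\{3, N(x)\}$, the diameter bound gives $\diam(E_{n,j(n)}) < \frac{2r}{3}\bigl(1 + \frac{1}{3 \log 2}\bigr) < r$, since $\tfrac{2}{3}\bigl(1 + \tfrac{1}{3 \log 2}\bigr) < 1$. Because $x \in E_{n,j(n)}$, this forces $E \cap E_{n,j(n)} \subset B(x,r)$.

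It then remains to check $\lambda_E(E \cap E_{n,j(n)}) = \tfrac{1}{2^n}$. By Lemma \ref{vcurve}, for $\mathcal{L}^1$-a.e.\ $a \in (0,1)$ the slice $E \cap Z_a$ consists of a single point; since the $y$-projections of the chosen $n$-th generation elements of $E$ partition $(0,1]$ into disjoint dyadic intervals of length $\tfrac{1}{2^n}$, this single point must lie in $E_{n,j(n)}$ whenever $a \in \pi_y(E_{n,j(n)})$. Hence $\pi_y(E \cap E_{n,j(n)}) = \pi_y(E_{n,j(n)})$ $\mathcal{L}^1$-a.e., and since $\lambda_E = (\pi_y^{-1})_* \mathcal{L}^1$ we conclude $\lambda_E(B(x,r) \cap E) \geq \lambda_E(E \cap E_{n,j(n)}) = \tfrac{1}{2^n} \geq \tfrac{r}{3}$. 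The main delicate point is the tight balancing of the generation $n$: the constraint $\diam(E_{n,j(n)}) < r$ demands $\tfrac{1}{2^n}$ a little smaller than $r$, while the target $\lambda_E(E \cap E_{n,j(n)}) \geq r/3$ requires it not to shrink too much, and the factor $1 + \tfrac{1}{n \log 2}$ measuring the ``non-verticality'' of $E_{n,j(n)}$ squeezes the admissible window; fortunately, for $n \geq 3$ that window $\bigl[\tfrac{r}{3}, \tfrac{r}{1 + 1/(n\log 2)}\bigr)$ has ratio exceeding $2$ and therefore always contains an inverse power of $2$.
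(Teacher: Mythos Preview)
Your proof is correct and follows the same approach as the paper's: use $x\in A$ to place $x$ inside a steep-slope element $E_{n,j(n)}\in\mathcal{G}_n$ for all large $n$, so that $E_{n,j(n)}$ has diameter only slightly larger than its $y$-height $2^{-n}$, hence fits inside $B(x,r)$ while still carrying $\lambda_E$-mass $2^{-n}\geq r/3$. The paper's proof is a two-line sketch that omits the explicit choice of generation and the numerical check of the constant $\tfrac{1}{3}$; you have filled in precisely those details, including the verification that $\tfrac{2}{3}\bigl(1+\tfrac{1}{3\log 2}\bigr)<1$, which is what makes the window for $2^{-n}$ nonempty.
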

	
	\begin{proof}
		Let $E_n(x)$ be the unique element in $\mathcal{E}_n(E)$ that contains $x$.  Since $x \in A$, we have that $E_n(x) \subseteq \mathcal{G}_n$ for sufficient large $n$. This implies that there exists $N = N(x)$ such that (\ref{eqn:lower-bound}) holds for  $r < \diam\left( E_N(x) \right)$.
	\end{proof}
	
	\subsection{Proof of Theorem \ref{bg}}
	
	We are now ready to prove Theorem \ref{bg}.
	
	\begin{proof}[Proof of Theorem \ref{bg}]
		
		We list the conditions that are already known: Almost surely
		\begin{itemize}
			\item for any $\eps>0$, there exist $C = C(x, \eps)$ and $r = r_0(x)$ such that when $r < r_0$,
			\[
			\mu\left( B(x,r) \cap \Gamma(W) \right) \leq C \cdot r^{\frac{3}{2}-\eps}.
			\]
			See inequality \eqref{umassbound}.
			
			\item for any $E \in \mathcal{E}$ with $\lambda_E \in \mathbf{E}$ equipped on $E$, and $x \in E$, there exists a $r_1 = r_1(x)$ such that when $r < r_1$,
			\begin{equation*}
				\lambda_E(B(x,r) \cap E) \geq \frac{1}{3}r.
			\end{equation*}
			See Lemma \ref{linear1}.
			
			\item $\dim_C(E) \geq 1$ for any $E \in \mathcal{E}$. See Lemma \ref{dimcgood}.
			
			\item $\mu(A') = 0$. See Proposition \ref{sdc}.
		\end{itemize}
		
		With Theorem \ref{fmodest} and all the conditions listed above, it is sufficient to prove that $\Mod_1(\mathbf{E}) > 0$ a.s.. For that let $\rho:X\to[0,\infty)$ be admissible for $\mathbf{E}$, that is $\int_E \rho d \lambda_E>1$ for every $E\in\mathcal{E}$. By the Vitali-Carath\'eodory theorem \ref{VCT} we can assume that $\rho$ is in fact lower-semicontinuous.
		
		As in Lemma \ref{lemma:construction}, we will construct a  measurable function $\rho_{\infty}{:} X \to [0, \infty)$ such that
		\begin{enumerate}
			\item $\int \rho d\mu \geq \int \rho_\infty d \mu$,\label{srandommeasure}
			
			\item  $\rho_\infty(x_1,y_1)=\rho_\infty(x_2,y_2)$ if $y_1=y_2$,\label{equalrandommeasure}
			
			\item $\exists F\in\mathcal{E}$ such that $\rho(x)\leq \rho_\infty(x)$ for $x\in F$.\label{arandommeasure}
		\end{enumerate}
		
		We let
		\[
		\mathcal{A}^m_n = \left\{A \cap E^m_{n,j}: E^m_{n,j} \in \mathcal{E}^m_n \right\} \ \textnormal{and} \ \mathcal{A}_n = \bigcup_{m=1}^{2^n} \mathcal{A}^m_n.
		\]
		
		\begin{figure}[htbp]
			\centering
			\includegraphics[height = 1.2 in]{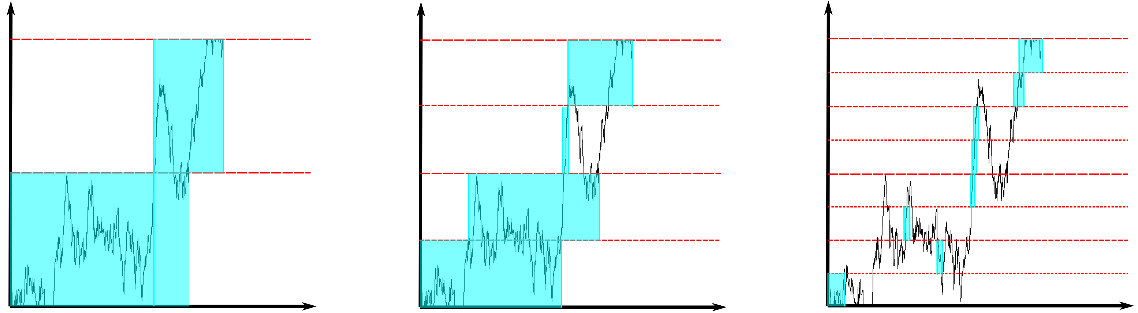}
			\caption{A illustration of $F$ for three generations.}
			\label{SBM}
		\end{figure}
		
		We will first construct a sequence of ``piecewise'' constant function $\rho_n$ from $\rho$ as follows.
		
		From each of $\mathcal{A}_1^m, m=1,2$, choose one element with the smallest average of $\rho$. Specifically, let $F_{1,1} \in \mathcal{A}^1_1$ such that
		\[
		\intbar_{F_{1,1}} \rho \ d\mu \leq \intbar_{A^1_{1,j}} \rho \ d\mu
		\]
		for any $A^1_{1,j} \in \mathcal{A}_1^1$. Similarly, let $F_{1,2} \in\mathcal{A}_1^2$ be chosen so that
		\[
		\intbar_{F_{1,2}} \rho \ d\mu \leq \intbar_{A^2_{1,j}} \rho \ d\mu
		\]
		for any $A^2_{1,j} \in\mathcal{A}_1^2$. Such elements exist since $\Card\left(\mathcal{A}_1^m \right) < \infty, m=1,2$.
		
		Define $\rho_1: X \to [0, \infty)$ by
		\[
		\rho_1\big\rvert_{A^m_{1,j}} \equiv \intbar_{F_{1,m}}\rho d\mu,   \mbox{ if } A^m_{1,j} \in \mathcal{A}^m_1.
		\]
		for $m =1,2$ and $\rho_1 = 0$ on $X \setminus A$.
		
		Thus $\rho_1$ is the same constant on every element of $\mathcal{A}_1^m, m=1,2$.  Hence,  $\rho_1$ is also constant on $A \cap \pi^{-1}_y(a)$ for any $a \in [0,1]$, and $\int_X \rho d\mu \geq \int_X \rho_1 d\mu$.
		
		Continuing by induction, suppose that at step $n-1$ we have chosen rectangles $F_{n-1,1},\ldots,F_{n-1,2^{n-1}}$ and defined $\rho_{n-1}$ so that it is a constant on all elements of $\mathcal{A}^m_{n-1}, m=1, \ldots, 2^{n-1}$ and $0$ otherwise.
		
		To construct $\rho_{n}$,   from each of the elements from $\mathcal{A}^m_n$ that is also within $F_{n-1,j}$ for  $j\in\{1,\ldots,2^{n-1}\}$  chosen above, we will select one with the smallest average of $\rho$, i.e.,  the element $F_{n,l}$ with $l \in\{1,\ldots,2^{n}\}$ are such that
		\begin{align*}
			\intbar_{F_{n,l}} \rho d\mu \leq \intbar_{A_{n,k}} \rho d\mu,
		\end{align*}
		whenever $A_{n,k}, F_{n,l}\subset F_{n-1,j}$ for some $j\in\{1,\ldots,2^{n-1}\}$, $A_{n,k} \in \mathcal{A}_n$ and $\pi_y(A_{n,k})=\pi_y (F_{n,l})$ a.e..
		
		Define $\rho_n$ be setting
		\begin{align*}
			\rho_n\big\rvert_{A_{n,k}} \equiv \intbar_{F_{n,j}}\rho d\mu,   \mbox{ if } \pi_y(A_{n,k})=\pi_y (F_{n,j})
		\end{align*}
		and $\rho_n = 0$ on $X \setminus A$.
		
		Then we have a sequence of Borel measurable functions $\{\rho_n \}$. It follows from the construction above  that we have
		\[
		\int_X \rho \ d\mu \geq \int_X \rho_1 \ d\mu \geq \ldots\geq
		\int_X \rho_n \ d\mu \geq \ldots.
		\]
		
		For every $z=(x,y)\in X$ we define
		\begin{align}\label{definenewrho}
			\rho_\infty(z) := \liminf_{n\to\infty} \rho_n(z).
		\end{align}
		
		Condition \eqref{srandommeasure} then follows from the definition of $\rho_\infty$ and Fatou's Lemma.  Equality \eqref{equalrandommeasure} follows from \eqref{definenewrho} and the fact that for every $n$ the function $\rho_n$ is constant on all horizontal slices $A \cap Z_a$.
		
		Define
		\[
		F=\bigcap_{n=1}^{\infty}\bigcup_{j=1}^{2^n} F_{n,j}
		\]
		and $F$ is clearly an element in $\mathcal{E}$. The proof of \eqref{arandommeasure} then is exactly the argument used at the end of the proof of Lemma \ref{lemma:construction} and we omit the details here.
		
		Note that by \eqref{equalrandommeasure} there exists a unique $\bar{\rho}: [0,1] \to \mathbb{R}$ s.t.  for $\mathcal{L}^1$-a.e. $a \in [0,1]$ we have $\bar{\rho}(a) = \rho_\infty(z),$ whenever $\pi_y(z) = a$. Then, by $(3)$, for any $E \in \mathcal{E}$ we have
		\begin{equation}\label{newadm}
			\int_E \rho_\infty(z) \ d \lambda_E = \int_0^1 \bar{\rho}(a) \ da =  \int_F \rho_\infty(x) \ d \lambda_F \geq \int_F \rho(x) \ d \lambda_F \geq 1.
		\end{equation}
		Thus $\rho_\infty$ is also admissible for $\mathcal{E}$.
		
		Recall that for any $a \in [0,1]$, we have $L^a(T_6) > 0$ a.s. by Theorem \ref{ltv}. Since $L^a(T_6)$ is continuous a.s. by Theorem \ref{lf}, a.s. there exists a $\delta= \delta(\omega) > 0$ such that $L^a(T_6) > \delta$ for any $a \in [0,1]$. This implies that a.s.
		\begin{equation}\label{hitmeasure}
			l^a\left(\Gamma(W) \cap [0, T_6] \times [0,1] \cap Z_a\right) > \delta(\omega)
		\end{equation}
		for any $a \in [0,1]$. It then follows from Remark \ref{cover} that a.s.
		\begin{equation}\label{hestimate}
			l^a(A \cap Z_a) \geq \frac{1}{2}\delta(\omega)
		\end{equation}
		for a.e. $a \in [0,1]$.
		
		Finally, using \eqref{srandommeasure}, the Disintegration Theorem \cite[Theorem $5.3.1$]{AGS05},  and the inequalities above, we obtain
		\begin{align*}
			\int_X \rho d\mu & \geq \int_X \rho_\infty d\mu = \int_A \rho_\infty d\mu
			\\
			&=  \int_0^1 \left[ \int_{A \cap \pi_y^{-1}(a)}\rho_\infty(z) \ d l^a(z) \right] \ da
			\\
			&=  \int_0^1 \bar{\rho}(a) l^a(A \cap Z_a)\ da
			\\
			&  \geq \frac{\delta}{2}\int_0^1 \bar{\rho}(a) \ da \tag{inequalities \eqref{hestimate} and \eqref{newadm}}
			\geq \frac{\delta}{2}.
		\end{align*}
		
		This implies that $\Mod_1(\mathbf{E}) > 0$ a.s. and thus finishes the proof.
	\end{proof}
	
	\section{Remarks and open problems. }\label{Conclusion}
	In light of Theorems \ref{bg} and \ref{thm:mg} one may expect that the graph $\G$ of a continuous function $f: [0,1] \to \mathbb{R}$  is always minimal for conformal dimension. While this may not be true in general, we expect this to be the case provided the intersections of $\G$ with horizontal lines $Z_a=(-\infty,\infty)\times\{a\}$ are often large. Specifically, we propose the following problem.
	
	\begin{Conjecture}
		Let $\G$ be the graph of a continuous function $f: [0,1] \to \mathbb{R}$. 
		Suppose	there is a set $E\subset \mathbb{R}$ of positive Lebesgue measure so that  
		\begin{align}\label{large-fibres}
			\dim_H(\G\cap Z_a) =\dim_H \G -1
		\end{align}
		for all $a\in E$. Then $\G$ is minimal for conformal dimension.
	\end{Conjecture}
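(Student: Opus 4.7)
The plan is to verify the hypotheses of Theorem \ref{fmodest} for $\Gamma$ with $q=d:=\dim_H\Gamma$, following the template established in Sections \ref{CDBG}--\ref{BGMAS} for the Brownian graph. The product-like structure of $\Gamma$ built from its horizontal fibers, together with the hypothesis $\dim_H(\Gamma \cap Z_a)=d-1$ for $a\in E$ of positive Lebesgue measure, should replace the role played by Kaufman's dimension doubling theorem in the proof of Theorem \ref{bg}.

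First I would construct the background measure $\mu$ on $\Gamma$ as follows. For each $a \in E$, Frostman's lemma yields a Borel probability measure $\nu_a$ supported on $\Gamma \cap Z_a$ with $\nu_a(B(x,r)) \lesssim r^{d-1-\eps}$ for any prescribed $\eps>0$; a measurable selection argument (of von Neumann--Aumann type) produces a jointly Borel family $a \mapsto \nu_a$. Set $\mu(A)=\int_E \nu_a(A \cap Z_a)\,da$, so that a Fubini-type estimate gives $\mu(B(x,r))\lesssim r^{d-\eps}$, which is the desired local upper mass bound. Crucially, the $\pi_y$-pushforward of $\mu$ is absolutely continuous with respect to $\mathcal{L}^1|_E$, which is what makes the Fuglede modulus computation tractable.

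Next I would mimic the dyadic strip decomposition of Section \ref{verticalCS}. Decompose $\Gamma$ into "elements" $\{E_{n,j}^m\}$ indexed by the dyadic strip $(m/2^n,(m+1)/2^n]$ and by the successive crossings by the graph of the two boundary lines of the strip; then isolate the "fast-crossing" subfamily $\mathcal{G}_n$ consisting of elements whose $x$-extent is not too small (as in the definition preceding Proposition \ref{sdc}). Continuity of $f$ implies that each such element projects injectively onto $(m/2^n,(m+1)/2^n]$, so one produces vertical Cantor sets $E\in\mathcal{E}$ by choosing one child element per strip at each generation, and equips each with $\lambda_E:=(\pi_y^{-1})_\ast \mathcal{L}^1$. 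The analog of Lemma \ref{dimcgood} should then follow from Theorem \ref{cd1} or \ref{unioncd1}: flatness is immediate from the disjoint $y$-projections, and the relative-distance/comparable-diameter hypotheses follow from a quantitative version of the continuity of $f$ along selected branches. The positivity of $\Mod_1(\mathbf{E})$ would then be established via the Vitali--Carath\'eodory reduction plus the "minimum-of-averages" selection procedure of Lemma \ref{lemma:construction} and the proof of Theorem \ref{bg}, producing $\rho_\infty$ that is constant on horizontal slices, dominated by $\rho$ on some $F\in\mathcal{E}$, and of smaller $\mu$-integral; disintegrating $\mu$ and using $\nu_a(\Gamma\cap Z_a)=1$ for $a\in E$ gives $\int \rho\,d\mu \geq \int_E \bar{\rho}(a)\,da \geq \mathcal{L}^1(E)>0$.

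The main obstacle is that, unlike Brownian motion, a general continuous function enjoys neither self-similarity, nor a Markov property, nor a readily available local time, so the analog of the "fast-growth" set $A$ from Proposition \ref{sdc} has no a priori measure-theoretic control. Concretely, verifying that $\mathcal{G}_n$ sweeps out $\nu_a$-almost all of $\Gamma \cap Z_a$ for a.e.\ $a \in E$---which is what ensures that the vertical Cantor sets are rich enough to carry a positive-modulus family---requires a quantitative link between the fibre dimension hypothesis $\dim_H(\Gamma \cap Z_a)=d-1$ and the strip-by-strip oscillation statistics of $f$. Forging such a link, perhaps through a Marstrand-type slicing argument applied in reverse or through a Hausdorff-measure version of the energy method on fibers, appears to be the crux of the conjecture.
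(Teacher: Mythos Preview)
This statement is a \emph{Conjecture} in the paper (Section~\ref{Conclusion}), not a theorem; the paper offers no proof and explicitly poses it as an open problem. So there is no argument in the paper to compare your proposal against.

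Your outline follows the paper's Brownian-motion template faithfully, and you are right that this is the natural approach. But the obstacle you name in your final paragraph is genuine and is exactly why the statement remains a conjecture. In the Brownian case, the strong Markov property and the existence of local time are what make Proposition~\ref{sdc} and Lemma~\ref{gecei} work: they give precise quantitative control over how often the graph makes ``slow'' crossings of dyadic strips, and hence guarantee that the fast-growth set $A$ has full $\mu$-measure and that every vertical Cantor set projects surjectively. For a general continuous $f$ satisfying only the fibre-dimension hypothesis~\eqref{large-fibres}, none of this machinery is available, and the hypothesis gives no oscillation information at all---it is a purely dimensional statement about horizontal slices, with no uniformity in $a$ and no link to the vertical structure of the graph.

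There are also earlier soft spots in your plan that would need work even before reaching that crux. The Frostman measures $\nu_a$ come with constants depending on $a$ and on the auxiliary $\eps$, so the integrated measure $\mu(A)=\int_E \nu_a(A\cap Z_a)\,da$ need not satisfy a uniform upper mass bound $\mu(B(x,r))\lesssim r^{d-\eps}$ without further hypotheses; and the measurable-selection step, while plausible, is not routine since the Frostman measure is far from unique. Finally, even granting all of the above, verifying the relative-distance decay~\eqref{rdist} and diameter comparability~\eqref{cdiam} for the vertical Cantor sets requires more than continuity of $f$---in the Brownian case this came from the explicit slope bound defining $\mathcal{G}_n$, which again has no analogue here.
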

	
	To see the importance of continuity and condition (\ref{large-fibres}) in the conjecture above we next observe that if they fail then the graph of $f$ is not necessarily minimal. Indeed,  by a theorem of Tukia \cite{Tukia} there is a set $E\subset[0,1]$ and a quasisymmetric mapping $g:\mathbb{R}\to\mathbb{R}$ such that $\dim_H E<1$ and $\dim_H g([0,1]\setminus E)<1$. Therefore, by Kovalev's theorem \cite{Kov06} both $E$ and $[0,1]\setminus E$ have conformal dimension $0$. Now, $\chi_E([0,1])$ can be written as the disjoint union of $A=E\times\{1\}$ and $B=([0,1]\setminus E)\times \{0\}$. By Proposition 5.2.3 in \cite{MT10} we have $\dim_C(A\cup B)=\max\{\dim_C A, \dim_C B\}$ provided $A$ and $B$ are disjoint compact subsets of a metric space. The proof of that result holds for non compact subsets as well, provided $\dist(A,B)>0$. Since in our example we have $\dist(A,B)\geq 1$, $\dim_C A=\dim_C B=0$ it follows that $\dim_C \chi_E([0,1])=\dim_C ( A\cup B) = 0$. On the other hand, since $\dim_H \chi_E([0,1]) \geq  \dim_H B=1$ it follows that  $\chi_E([0,1])$ is not minimal for conformal dimension.
	
	
	
	Another natural open problem is to find the conformal dimension of the trace (not the graph) of $d$-dimensional Brownian motion for every $d\in \mathbb{N}$ almost surely. Clearly, the trace of $1$-dimensional Brownian motion is a.s. minimal.  Since Hausdorff dimension of the trace of planar Brownian motion is $2$, it follows from \cite{Gehring} that it is minimal for quasiconformal mappings of the plane. We conjecture that it is also minimal for conformal dimension (that is for mapping into arbitrary metric spaces). Since Brownian motion is transient for $n \geq 3$, it is reasonable to assume that it is not minimal for sufficiently high  dimensions. In fact, as the dimension increases, there is greater flexibility to stretch and compress the Brownian motion, leading to the following conjectures.
	
	\begin{Conjecture}
		The $n$-dimensional Brownian motion is not minimal almost surely for sufficiently large $n$. Moreover, it has conformal dimension $1$ almost surely for sufficiently large $n$. 
	\end{Conjecture}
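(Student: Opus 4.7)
The plan is to establish the two inequalities $\dim_C(T_n) \geq 1$ and $\dim_C(T_n) \leq 1$ for the trace $T_n = W_n([0,\infty))$ of $n$-dimensional Brownian motion when $n$ is large. Together these give both assertions of the conjecture, since $\dim_H T_n = 2 > 1$ so non-minimality follows from the upper bound.

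For the lower bound, I would invoke connectedness. For $n \geq 4$ almost surely $W_n$ has no multiple points, so $T_n$ is a homeomorphic image of $[0,\infty)$, hence a non-degenerate connected metric space. Any non-degenerate connected metric space has Hausdorff dimension at least $1$, and since quasisymmetries preserve connectedness, $\dim_C(T_n) \geq 1$. (For $n=2,3$ the trace is still connected so the same bound holds, though it is not needed here.)

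For the upper bound---which is the heart of the problem---the heuristic is that in high dimensions, excursions of $W_n$ from a given point go off in nearly orthogonal directions by concentration of measure, and transience prevents returns, so $T_n$ should resemble a metric tree more and more strongly as $n \to\infty$. My strategy is to construct, given $\eps > 0$ and $n$ large depending on $\eps$, a hierarchical decomposition of $T_n$ into subsets $\{T_n(Q)\}_Q$ indexed by space-time boxes $Q$ at all dyadic scales, such that siblings $T_n(Q_1), T_n(Q_2)$ with a common parent satisfy a uniform lower bound on their relative distance $\Delta(T_n(Q_1), T_n(Q_2))$ almost surely. Combined with H\"older estimates relating $\diam(T_n(Q))$ to the time-length of $Q$, this would exhibit $T_n$ as quasisymmetrically equivalent to a Cantor--dendrite whose Hausdorff dimension can be made arbitrarily close to $1$. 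An alternative would be to work with the time parametrization suitably snowflaked: one looks at the map $W_n(t) \mapsto t^{1/2 + \eps}$ viewed as a map to $\mathbb{R}$, whose image has Hausdorff dimension at most $1$.

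The main obstacle is verifying the quasisymmetry estimate in either construction. Because $n$-dimensional Brownian motion is Brownian-scale invariant, any distortion bound must be uniform across all scales. At every scale there are \emph{slow points}, where $|W_n(s) - W_n(t)|$ is atypically small compared to $|s-t|^{1/2}$, and \emph{fast points}, where it is atypically large; both populations produce potential failure of quasisymmetry. Their Hausdorff dimensions can in principle be controlled by the downcrossing/overshoot techniques developed in Section~\ref{BGMAS}, and by standard concentration one expects them to shrink as $n \to\infty$; but obtaining a quantitative threshold $n_0$ and arranging the hierarchical decomposition (or snowflake parametrization) to avoid these sets uniformly is delicate. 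A serious added difficulty is that in the tree-like construction one must also verify that the global linking between different branches is compatible with the proposed combinatorial tree, which requires a fine understanding of the joint law of several Brownian excursions in high dimensions---beyond the one-point multifractal information we have used in this paper for the graph.
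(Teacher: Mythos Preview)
The statement you are attempting to prove is labeled a \emph{Conjecture} in the paper; the authors do not prove it, and there is no ``paper's own proof'' to compare against. The surrounding discussion in Section~\ref{Conclusion} offers only the heuristic that transience for $n\geq 3$ should allow one to ``stretch and compress'' the trace, and leaves the problem open.

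On the substance of your proposal: the lower bound $\dim_C(T_n)\geq 1$ via connectedness is correct and standard. For the upper bound, however, what you have written is a sketch of a strategy together with an honest list of its obstructions, not a proof. In particular, your ``alternative'' of sending $W_n(t)\mapsto t^{1/2+\eps}$ would require the inverse parametrization $W_n(t)\mapsto t$ (well-defined for $n\geq 4$) to be $\eta$-quasisymmetric after snowflaking, which is exactly the statement that $|W_n(s)-W_n(t)|\asymp |s-t|^{1/2}$ uniformly up to a power---and this fails at slow and fast points, as you yourself note. The hierarchical tree approach faces the same issue: you would need the relative distance between sibling pieces to be bounded below \emph{uniformly at all scales almost surely}, and scale invariance of Brownian motion means any almost-sure failure at one scale propagates to all scales. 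Neither concentration of measure in high $n$ nor the multifractal estimates of Section~\ref{BGMAS} give you this uniformity; they control the \emph{dimension} of the exceptional set, not its emptiness. So the proposal identifies the right difficulty but does not overcome it, and the conjecture remains open.
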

	
	One of the most widely studied stochastic objects in recent years has been the \emph{Schramm--Loewner Evolution (SLE)}. SLE was introduced by Oded Schramm in \cite{Sch00}, and there has been significant progress in the last decade. It is a random growth process generated from a family of Riemann mappings $\{g_t(z)\}$, where $\{g_t(z)\}$ is the solution of the following ordinary differential equation: For any $z \in \{\omega \in \mathbb{C}: \Im\omega \geq 0 \} \setminus \{0\}$,
	\begin{equation}\label{Loewner}
		\partial_t g_t(z) = \frac{2}{g_t(z) - \sqrt{\kappa}B(t)}, \  g_0(z) = z.
	\end{equation}
	Equation \ref{Loewner} is the Loewner differential equation with a driving parameter of a standard one-dimensional Brownian motion running with speed $\kappa$. This process is also considered as conformal invariant random curves in simply connected planar domains. It has been proven to be the scaling limit of a variety of two-dimensional lattice models in statistical mechanics. Readers may refer to \cite{Law05} for more information.
	
	It is proved in \cite{RS05} that for all $\kappa \neq 8$ the $\SLE_{\kappa}$ trace is a continuous path. It is a simple path for $\kappa \in [0,4]$, a self-intersecting path for $\kappa \in (4,8)$ and space-filling for $\kappa > 8$. Thus it is only interesting to study $\SLE_{\kappa}$ for $\kappa \in [0,8]$. Beffara proved in \cite{Bef08} that the Hausdorff dimension of the $\SLE_{\kappa}$ trace is $1+ \frac{\kappa}{8}$ for $\kappa \in [0,8]$ almost surely. \cite{AK08} and \cite{AS08} study the intersection of the SLE trace with $\mathbb{R}$ or semi-circles and their results show that the SLE trace has a product-like structure.
	
	Quasisymmetric deformations of SLE appear naturally as the scaling limits of critical models in statistical physics on skewed lattices. The question of the conformal dimension of SLE can be heuristically formulated in Physical terms as follows: Can we decrease the scale of interactions in a critical lattice model by perturbing the lattice? In an effort to answer this question, we set the following conjecture:
	
	\begin{Conjecture}
		The $\SLE_\kappa$ trace is minimal for conformal dimension almost surely, i.e., the conformal dimension of the $\SLE_\kappa$ curve is $1+\frac{\kappa}{8}$ almost surely for any $\kappa \in [0,8]$.
	\end{Conjecture}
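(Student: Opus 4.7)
The plan is to verify the hypotheses of Theorem \ref{fmodest} for the $\SLE_\kappa$ trace $\gamma_\kappa$, stopped at the first exit from a large disk, following the template that worked for the Brownian graph in Theorem \ref{bg}. Concretely, one needs to produce (a) a measure $\mu_\kappa$ on $\gamma_\kappa$ with upper mass bound $\mu_\kappa(B(x,r)) \lesssim r^{1+\kappa/8-\eps}$ for every $\eps>0$, (b) a family $\mathcal{E}$ of ``transverse'' Cantor subsets of $\gamma_\kappa$ carrying measures $\lambda_E$ with $\lambda_E(B(x,r)\cap E) \gtrsim r^s$ locally for some $s>1$ arbitrarily close to $1$ and with $\dim_C(E)\geq 1$, and (c) a proof that $\Mod_1(\mathbf{E})>0$, where $\mathbf{E}=\{\lambda_E\}_{E\in\mathcal{E}}$. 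Then Theorem \ref{fmodest} applied with $q=1+\kappa/8-\eps$ for arbitrary $\eps>0$ would give $\dim_C\gamma_\kappa\geq 1+\kappa/8$, and since the reverse inequality is trivial the conjecture follows.

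For (a), in analogy with the local time construction of Section \ref{BGMAS}, I would build $\mu_\kappa$ by integrating a one-parameter family of ``slice occupation densities'' $l^a$ supported on $\gamma_\kappa\cap Z_a$: using \cite{AK08,AS08}, which give $\dim_H(\gamma_\kappa\cap Z_a)=\kappa/8$ on a set of $a$ of full Lebesgue measure, one can define $l^a$ as a Hausdorff-type measure with gauge essentially $r^{\kappa/8}$ and then set $\mu_\kappa(A)=\int l^a(A\cap Z_a)\,da$. The required upper bound on $\mu_\kappa(B(x,r))$ should follow from an almost-sure H\"older regularity of $a\mapsto l^a$, proved by a two-point moment estimate using the conformal Markov property. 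For (b), I would mimic the decomposition of Section \ref{verticalCS}: use stopping times at crossings of horizontal dyadic levels to partition $\gamma_\kappa$ into generation-$n$ pieces $E^m_{n,j}$, distinguish ``flat'' from ``steep'' pieces quantitatively, and show that the set $A'$ of points eventually in flat pieces has $\mu_\kappa(A')=0$ by way of an SLE analog of Lemma \ref{hitdim} (an upper bound on the dimension of times at which the horizontal coordinate of $\gamma_\kappa$ barely moves). Vertical Cantor sets $E\subset A=\gamma_\kappa\setminus A'$ would then be built by choosing one steep piece in each of the $2^n$ dyadic strips at every generation, and $\dim_C(E)\geq 1$ would follow from Theorem \ref{unioncd1} applied to compact hierarchical approximations $F^N\subset E$, with the flatness bound and the relative-distance condition \eqref{ssdist} supplied by the quasi-self-similarity and scale invariance of $\gamma_\kappa$.

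The main obstacle is (c), the positivity of the Fuglede modulus, as it was for the Brownian graph. The proof of Theorem \ref{bg} leveraged two very specific structural features: that $\mu$ disintegrates as $\int l^a\,da$ over horizontal slices, and that $\Gamma(W)$ is a graph, so by selecting at each generation and each dyadic strip the piece with the smallest average of $\rho$ one obtains a vertical Cantor set $F$ together with a symmetrized function $\rho_\infty$ that is constant on each $Z_a$ and admissible on $F$; a straightforward disintegration bound then gives $\int\rho\,d\mu\geq (\delta/2)\int\bar\rho(a)\,da\geq\delta/2$. For $\kappa\in(4,8)$ the trace self-intersects, so the symmetrization step of Lemma \ref{lemma:construction} has no immediate analog, and one cannot unambiguously pick ``one element per strip''; a replacement would need either a generalized disintegration adapted to the fractal geometry of SLE (for instance along an orthogonal foliation given by level lines of a coupled GFF, in place of horizontal lines) or an ensemble-averaged selection argument that exploits the rotational invariance of the driving process.

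As a point of entry, the case $\kappa\in[0,4]$, where $\gamma_\kappa$ is a simple curve and the full graph-like structure of Section \ref{BGMAS} is retained modulo scaling, looks most tractable and is where I would begin. The case $\kappa=8$ reduces to conformal minimality of the plane and is classical. The genuinely new difficulty, and the reason the conjecture is open, is the self-intersecting regime $\kappa\in(4,8)$, where the transverse-slice decomposition must be replaced by something intrinsically fractal; resolving this would likely require new estimates on the conformal modulus of SLE slice ensembles.
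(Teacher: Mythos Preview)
The statement you are addressing is labeled a \emph{Conjecture} in the paper, and the paper provides no proof whatsoever; it is explicitly posed as an open problem in Section~\ref{Conclusion}. There is therefore no ``paper's own proof'' to compare your proposal against. What you have written is not a proof but a research program, and you yourself acknowledge this at several points (``would need'', ``looks most tractable'', ``resolving this would likely require new estimates''). That is fine as an outline of how one might attack the problem, but it should not be presented as a proof attempt.

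As a research plan your proposal is reasonable in spirit but contains a substantive inaccuracy and several genuine gaps even in the ``easy'' regime. Your claim that for $\kappa\in[0,4]$ ``the full graph-like structure of Section~\ref{BGMAS} is retained modulo scaling'' is wrong: the $\SLE_\kappa$ trace is a simple curve in the upper half-plane, but it is not the graph of a function over either coordinate axis. The machinery of Section~\ref{BGMAS} depends essentially on $\Gamma(W)$ being a graph so that (i) the horizontal projection $\pi_y$ restricted to a vertical Cantor set is a.e.\ a bijection onto $[0,1]$, allowing $\lambda_E$ to be defined as a pullback of Lebesgue measure, and (ii) the symmetrization in Lemma~\ref{lemma:construction} and the proof of Theorem~\ref{bg} make sense because each horizontal slice meets each generation-$n$ piece in a controlled way. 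An $\SLE_\kappa$ curve for $\kappa\in(0,4]$ may intersect a horizontal line $Z_a$ in a complicated Cantor set, and there is no canonical ordering of the pieces ``from left to right'' compatible with the curve's intrinsic structure; the crossing-time decomposition you sketch does not obviously reproduce the hierarchical structure needed for Theorem~\ref{unioncd1}.

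Beyond this, the ingredients you list for (a) and (b) are themselves open. The papers \cite{AK08,AS08} give intersection \emph{dimensions}, not the existence of a jointly H\"older-continuous local-time field $(a,t)\mapsto L^a(t)$ for $\SLE$; without such an object (or a substitute) there is no $\mu_\kappa$ with the required disintegration and upper mass bound. Likewise, no $\SLE$ analog of Lemma~\ref{hitdim} is available in the literature, and the ``slow points'' set for $\SLE$ would have to be analyzed from scratch. You correctly identify (c), the positivity of modulus, as the hardest step, but the difficulty already arises for simple $\SLE$, not only in the self-intersecting regime.
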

	
	A compact set $K\subset \mathbb{C}$  is \emph{(quasi)conformally removable} if any homeomorphism of $\mathbb{C}$, which is (quasi)conformal on $\mathbb{C} \setminus K$, is (quasi)conformal on $\mathbb{C}$.  It follows from the measurable Riemann mapping theorem that a set is conformally removable if and only if it is quasiconformally removable.  See \cite{You15} for a survey of results on (quasi)conformal removability.  Intuitively,  non-removable or minimal subsets of the plane can both be thought of as  being ``large'' in a quasiconformally invariant sense.  For instance, a product set of the form $E\times[0,1]$, where $E$ is uncountable, is both minimal as well as non-removable.  Nonetheless, there exists no direct relationship between the concepts of non-removability and minimality for subsets of the pane.
	
	In \cite{JS00} it was established that the boundaries of John domains and H\"older domains are conformally removable.   Combining this with \cite[Theorem $5.2$]{RS05} it follows that $\SLE_{\kappa}$ is conformally removable for $\kappa \in (0,4)$.  More recently,   it was shown in \cite{KMS22} and \cite{KMS23} that $\SLE_{\kappa}$ is almost surely conformally removable for $\kappa \in [4,\delta)$, where $\delta > 4$. In \cite{DM23}, the Sobolev removability of $1$-dimensional Brownian graph $\Gamma$ was studied.  They showed that almost surely $\Gamma$ is not $W^{1,p}$-removable for all $p \in [1, \infty)$ but $\Gamma$ is $W^{1, \infty}$-removable. However, the conformal removability of $\Gamma$ is still an open question. It is known that $W^{1,2}$ removability implies conformal removability. Therefore, we conjecture that the graph of the $1$-dimensional Brownian motion is not removable almost surely.

\end{document}